\documentclass[aos]{imsart}

\RequirePackage[OT1]{fontenc}
\RequirePackage{amsthm,amsmath}
\usepackage{amssymb, enumerate}
\RequirePackage[numbers]{natbib}
\RequirePackage[colorlinks,citecolor=blue,urlcolor=blue]{hyperref}

\arxiv{1708.07642}

\startlocaldefs
\numberwithin{equation}{section}
\theoremstyle{plain}
\newtheorem{thm}{Theorem}[section]
\newtheorem{remark}{Remark}[section]
\newtheorem{lemma}{Lemma}[section]
\newtheorem{definition}{Definition}[section]
\newtheorem{proposition}{Proposition}[section]
\newtheorem{Corollary}{Corollary}[section]
\newtheorem{assumption}{Assumption}[section]
\endlocaldefs

\def\rank{{\rm rank}}

\def\trace{\mathop{\rm tr}}

\begin{document}

\begin{frontmatter}
\title{Efficient Estimation of Linear Functionals of Principal Components} 
\runtitle{Efficient estimation in PCA}

\begin{aug}
%
%
%
%

\author{\fnms{Vladimir} \snm{Koltchinskii}\thanksref{a}\ead[label=e1]{vlad@math.gatech.edu}},
\author{\fnms{Matthias} \snm{L\"offler}\thanksref{d}
\ead[label=e3]{m.loffler@statslab.cam.ac.uk}}
\and
\author{\fnms{Richard} \snm{Nickl}\thanksref{d}\ead[label=e4]{r.nickl@statslab.cam.ac.uk}}
\thankstext{a}{Supported in part by NSF Grants DMS-1810958, DMS-1509739 and CCF-1523768}
\thankstext{d}{Supported by ERC Grant UQMSI/647812}
\affiliation{Georgia Institute of Technology\thanksmark{a} and University of Cambridge\thanksmark{d}}
\address[a]{School of Mathematics \\
Georgia Institute of Technology Atlanta \\ GA 30332-0160 \\ United States of America. \\
	\printead{e1}}

\address[d]{Statistical Laboratory \\
	University of Cambridge\\
	CB3 0WB Cambridge \\
	United Kingdom \\
	\printead{e3,e4}}
\end{aug}

\begin{abstract}
We study principal component analysis (PCA) for mean zero i.i.d. Gaussian observations $X_1,\dots, X_n$ in a separable Hilbert space $\mathbb{H}$ with unknown covariance operator 
$\Sigma.$ 
The complexity of the problem is characterized by its effective rank 
${\bf r}(\Sigma):= \frac{{\rm tr}(\Sigma)}{\|\Sigma\|},$ where 
${\rm tr}(\Sigma)$ denotes the trace of $\Sigma$ and $\|\Sigma\|$
denotes its operator norm. 
We develop a method of bias reduction 
in the problem of estimation of linear functionals of eigenvectors of $\Sigma.$
Under the assumption that ${\bf r}(\Sigma)=o(n),$ we establish the asymptotic normality and asymptotic properties of the risk of the 
resulting estimators and prove matching minimax lower bounds, showing their semi-parametric optimality.


\end{abstract}


\begin{keyword}
\kwd{principal component analysis, spectral projections, asymptotic normality, semi-parametric efficiency}
\end{keyword}

\end{frontmatter}

\section{Introduction}
Principal Component Analysis (PCA) is commonly used as a dimension reduction technique for high-dimensional data sets. Assuming a general framework where the data lies in a Hilbert space $\mathbb{H},$ PCA can be applied to a wide range of problems such as functional data analysis  \cite{RamsaySilverman, LilaAstonSangalli} or machine learning \cite{BlanchardBousquetZwald}. \\
The parametric setting has been well understood since the 1960's (e.g. \cite{Anderson1963} and \cite{DauxoisPousseRomain1982}) and the asymptotic distribution of sample eigenvalues and sample eigenvectors is well known. For high-dimensional data, where the dimension $p=p(n) \rightarrow \infty$ with the sample size $n,$  
the spiked covariance model introduced by Johnstone in  \cite{JohnstoneAos2001}
has been the most common framework to study the asymptotic properties of principal 
components. In this model, it is assumed that the covariance matrix is given by a 'spike' and a noise part, that is 
$$\Sigma=\sum_{j=1}^l s_i (\theta_i \otimes \theta_i) + \sigma^2 I_p,$$
where $\sum_{j=1}^l s_i (\theta_i \otimes \theta_i)$ is a low rank 
covariance matrix involving several orthonormal components ('spikes') $\theta_i$
and $\sigma^2 I_p$ is the covariance of the noise. 
Error bounds in this model, based on perturbation analysis, were studied in \cite{Nadler2008}. Moreover, if $\frac{p}{n} \rightarrow c \in (0,1]$ the asymptotic distribution of sample eigenvectors was derived in \cite{PaulStatisticaSinica} and in more general asymptotic regimes in \cite{WangFan}. Assuming sparsity of the eigenvectors (sparse PCA), inference is possible even when $\frac{p}{n} \rightarrow \infty$. This model has recently received substantial attention, e.g. \cite{CaiMaWu2013, BerthetRiggolet, VuLei2013, WangBerthetSamworth, GaoZhou}. 

\smallskip

More recently, a so-called 'effective rank' setting for PCA has been considered, for example, in \cite{KoltchinskiiLouniciPCAAHP, KoltchinskiiLouniciPCABernoulli, KoltchinskiiLouniciPCAAOS, Vershynin, ReissWahl, NaumovSpokoinyUlyanov}. 
In this dimension-free setting, it is assumed that the covariance 
$\Sigma$ is an operator acting in a Hilbert space $\mathbb{H},$
no structural assumptions are made about $\Sigma$ and its 'complexity' 
is characterized by the \emph{effective rank} 
${\bf r}(\Sigma):=\trace(\Sigma)/\|\Sigma\|,$
$\trace(\Sigma)$ denoting the trace and $\|\Sigma\|$ denoting 
the operator (spectral) norm of $\Sigma.$
In a series of papers \cite{KoltchinskiiLouniciPCABernoulli, KoltchinskiiLouniciPCAAHP,  KoltchinskiiLouniciPCAAOS, KoltchinskiiLouniciPCAarxiv},  Koltchinskii and Lounici derived sharp bounds on the spectral norm loss of estimation of $\Sigma$ 
by the sample covariance $\hat \Sigma$ that provide complete characterization 
of the size of $\|\hat \Sigma-\Sigma\|$
in terms of $\|\Sigma\|$ and ${\bf r}(\Sigma),$ and obtained error bounds and limiting results for empirical spectral projection operators and eigenvectors of $\hat \Sigma$
under the assumption that ${\bf r}(\Sigma)=o(n)$ as $n\to\infty.$
In a recent paper \cite{NaumovSpokoinyUlyanov}, Naumov et. al. constructed bootstrap confidence sets for spectral projections in a lower dimensional regime where ${\bf r}(\Sigma)=o(n^{1/3})$. In \cite{ReissWahl}, Reiss and Wahl considered the reconstruction error for spectral projections. \\ 
In this paper, we further develop the results of \cite{KoltchinskiiLouniciPCAAHP} and \cite{KoltchinskiiLouniciPCAAOS} in the direction of semi-parametric statistics. In particular, we develop  a bias reduction method in the problem of 
estimation of linear functionals of principal components (eigenvectors of $\Sigma$)
and show asymptotic normality of the resulting de-biased estimators 
under the assumption that ${\bf r}(\Sigma)=o(n).$ We prove a non-asymptotic risk 
lower bound that asymptotically exactly matches our upper bounds, thus establishing rigorously the semi-parametric optimality of our estimator in a general dimension-free setting (as long as ${\bf r}(\Sigma)=o(n)$). 

The problem of $\sqrt n$-consistent estimation of low-dimensional functionals of high-dimensional parameters has received increased attention in recent years, and in various models semi-parametric efficiency of regularisation-based estimators has been studied, see for instance \cite{vdgBuhlmannRitovDezeureAOS, Jankovavdg, RenSunZhangZhou, NingLiu, FanRigolletWang}. Moreover, the paper \cite{GaoZhou} develops Bernstein-von-Mises (BvM) results for functionals of covariance matrices in situations where bias is asymptotically negligible. 
 While formal calculations of the Fisher information in such models indicate optimality of these procedures, a rigorous interpretation of such efficiency claims requires some care: the standard asymptotic setting for semi-parametric efficiency \cite{vdvAsymptoticStatistics} can not be straightforwardly applied because parameters in high-dimensional models are not fixed but vary with sample size $n$, so that establishing LAN expansions to apply Le Cam theory is not always possible or even desirable. In \cite{Jankovavdg} some non-asymptotic techniques have been suggested under conditions that ensure asymptotic negligibility of the bias of candidate estimators. We take here a different approach, based on using the van Trees' inequality \cite{GillLevit} to construct non-asymptotic lower bounds for the minimax risk in our estimation problem that match the upper bound \textit{exactly} in the large sample limit.

\section{Preliminaries}

\subsection{Some notations and conventions.}
Let $\mathbb{H}$ be a separable Hilbert space. In what follows, 
$\langle \cdot, \cdot \rangle$ denotes the inner product of ${\mathbb H}$
and also, with a little abuse of notation, the Hilbert--Schmidt inner product 
between Hilbert--Schmidt operators acting on ${\mathbb H}.$ Similarly, the notation $\|\cdot\|$ is used both for the norm of vectors in ${\mathbb H}$ and for the operator (spectral) norm of bounded linear operators in ${\mathbb H}.$
For a nuclear operator $A,$ $\trace(A)$ denotes its trace. 
We use the notation $\|\cdot\|_p,$ $1\leq p\leq \infty$ for 
the Schatten $p$-norms of operators in ${\mathbb H}:$ 
$\|A\|_p:= (\trace(|A|^p))^{1/p},$ where $|A|=\sqrt{A^{\ast} A},$
$A^{\ast}$ being the adjoint operator of $A.$
For $p=1,$ $\|A\|_1$ is the nuclear norm; for $p=2,$ $\|A\|_2$
is the Hilbert--Schmidt norm; for $p=\infty,$ $\|A\|_{\infty}=\|A\|$
is the operator norm.

Given vectors $u, v \in \mathbb{H},~ u \otimes v$ denotes the tensor product of $u$ and $v:$ 
$$
(u\otimes v):{\mathbb H}\mapsto {\mathbb H}, 
(u \otimes v)w:=\langle v, w \rangle u.
$$
Given bounded linear operators $A,B:{\mathbb H}\mapsto {\mathbb H},$
$A\otimes B$ denotes their tensor product:
$$
(A\otimes B)(u\otimes v)= Au\otimes Bv,\ u,v\in {\mathbb H}.
$$
Note that $A\otimes B$ can be extended (by linearity and continuity)
to a bounded operator in the Hilbert space ${\mathbb H}\otimes {\mathbb H},$
which could be identified with the space of Hilbert--Schmidt operators in 
${\mathbb H}.$ It is easy to see that, for a Hilbert--Schmidt operator $C,$ we have $(A\otimes B)C= ACB^{\ast}$ (in the finite-dimensional case, this defines the so called 
Kronecker product of matrices). On a couple of occasions, we might need 
to use the tensor product of Hilbert--Schmidt operators $A,B,$ viewed as vectors 
in the space of Hilbert--Schmidt operators. For this tensor product, we use 
the notation $A\otimes_v B.$

Throughout the paper, the following notations will be used: for nonnegative $a,b,$ $a \lesssim b$ means that there exists a numerical constant $c>0$ such that $a \leq cb;$
$a\gtrsim b$ is equivalent to $b\lesssim a;$ finally, $a\asymp b$ is equivalent 
to $a\lesssim b$ and $b\lesssim a.$ Sometimes, constant $c$ in the above relationships could depend on some parameter $\gamma.$ In this case, we 
provide signs $\lesssim,$ $\gtrsim$ and $\asymp$ with subscript $\gamma.$
For instance, $a\lesssim_{\gamma} b$ means that there exists a constant 
$c_{\gamma}>0$ such that $a\leq c_{\gamma}b.$

In many places in the proofs, we use exponential bounds for some random
variables, say, $\xi$ of the following 
form: for all $t\geq 1$ with probability at least $1-e^{-t},$
$\xi \leq Ct.$ In some cases, it would follow from our arguments 
that the inequality holds with a slightly different probability, say, 
at least $1-3e^{-t}.$ In such cases, it is easy to rewrite the bound 
again as $1-e^{-t}$ by adjusting the value of constant $C.$ Indeed,
for $t\geq 1$ with probability at least $1-e^{-t} = 1- 3e^{-t-\log(3)},$
we have $\xi \leq C(t+\log (3))\leq 2\log(3) C t.$ We will use 
such an adjustment of the constants in many proofs, often,
without further notice. 

\subsection{Bounds on sample covariance.}
Let 
$X$ be a Gaussian vector in ${\mathbb H}$
with mean ${\mathbb E}X=0$ and covariance operator $\Sigma:={\mathbb E}(X\otimes X).$ Given i.i.d. observations $X_1,\dots, X_n$ of $X,$ let $\hat \Sigma= \hat \Sigma_n$
be the sample (empirical) covariance operator defined as follows:
$$
\hat \Sigma := n^{-1}\sum_{j=1}^n X_j\otimes X_j.
$$

\begin{definition}
The effective rank of the covariance operator $\Sigma$
is defined as 
$${\bf r}(\Sigma):=\frac{\trace (\Sigma)}{\|\Sigma\|}.$$ 
\end{definition}

The role of the effective rank as a complexity parameter in covariance 
estimation is clear from the following result proved in \cite{KoltchinskiiLouniciPCABernoulli}.

\begin{thm}
\label{KL-Bernoulli}
Let $X$ be a mean zero Gaussian random vector in ${\mathbb H}$ with covariance operator $\Sigma$ and let $\hat \Sigma$ be the sample covariance based on 
i.i.d. observations $X_1,\dots, X_n$ of $X.$ Then
\begin{equation}
\label{expect_hat_Sigma}
{\mathbb E}\|\hat \Sigma-\Sigma\| \asymp 
\|\Sigma\|
\biggl(\sqrt{\frac{{\bf r}(\Sigma)}{n}} \bigvee \frac{{\bf r}(\Sigma)}{n}\biggr).
\end{equation}
\end{thm}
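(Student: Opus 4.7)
My plan is to establish matching upper and lower bounds for $\mathbb{E}\|\hat\Sigma-\Sigma\|$. Note first that the right-hand side behaves like $\|\Sigma\|\sqrt{{\bf r}(\Sigma)/n}$ in the ``concentration regime'' ${\bf r}(\Sigma)\le n$ and like $\|\Sigma\|{\bf r}(\Sigma)/n$ in the ``high effective-rank regime'' ${\bf r}(\Sigma)>n$, so the lower bound should be argued separately in each case, while the upper bound must produce both terms simultaneously.

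For the upper bound, I would begin from the variational identity
\[
\|\hat\Sigma-\Sigma\| \;=\; \sup_{\|u\|=1} \Bigl|\tfrac{1}{n}\sum_{j=1}^n \bigl(\langle X_j,u\rangle^2-\langle \Sigma u,u\rangle\bigr)\Bigr|,
\]
which presents the quantity as the supremum of a quadratic Gaussian chaos (equivalently, an empirical process indexed by sub-exponential functions). The first step is to invoke Gaussian concentration (Borell--TIS applied to the Lipschitz map $(X_1,\dots,X_n)\mapsto \|\hat\Sigma-\Sigma\|$, after passing through an isonormal representation $X_j=\Sigma^{1/2}\xi_j$) in order to show that $\|\hat\Sigma-\Sigma\|$ deviates from its mean only by an amount controlled by the weak variance $\sup_{\|u\|=1}\Var(\langle X,u\rangle^2)\lesssim \|\Sigma\|^2$, thereby reducing the task to estimating $\mathbb{E}\|\hat\Sigma-\Sigma\|$. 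For this expectation I would combine Rademacher symmetrization with a generic-chaining bound for Gaussian quadratic forms (or, alternatively, a decoupling-plus-Bernstein argument \emph{à la} Adamczak). The Gaussian complexity of the indexing class is naturally governed by $\mathbb{E}\|X\|^2=\trace(\Sigma)=\|\Sigma\|\,{\bf r}(\Sigma)$, which after averaging across $n$ samples and taking a square root produces the $\|\Sigma\|\sqrt{{\bf r}(\Sigma)/n}$ contribution, while a sub-exponential ``diameter'' term of order $\|X_j\otimes X_j\|/n$ produces the $\|\Sigma\|{\bf r}(\Sigma)/n$ contribution.

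For the lower bound I would split by regime. When ${\bf r}(\Sigma)\le n$, let $e_1$ be a unit top eigenvector of $\Sigma$ and let $(e_k)$ extend it to an eigenbasis; then
\[
\|\hat\Sigma-\Sigma\|^2 \;\ge\; \|(\hat\Sigma-\Sigma)e_1\|^2 \;=\; \sum_{k} \bigl|\langle(\hat\Sigma-\Sigma)e_1,e_k\rangle\bigr|^2.
\]
Each coordinate is a normalized sum of products of independent Gaussian coordinates of $X_j$, with variance of order $\|\Sigma\|\lambda_k/n$, so summing in $k$ gives $\mathbb{E}\|(\hat\Sigma-\Sigma)e_1\|^2\gtrsim \|\Sigma\|\trace(\Sigma)/n=\|\Sigma\|^2 {\bf r}(\Sigma)/n$, and a Paley--Zygmund or Gaussian concentration argument converts this into a matching bound for $\mathbb{E}\|(\hat\Sigma-\Sigma)e_1\|$. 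When ${\bf r}(\Sigma)>n$, the operator $\hat\Sigma$ has rank at most $n$ while $\trace(\hat\Sigma)=\tfrac{1}{n}\sum_j\|X_j\|^2$ concentrates around $\trace(\Sigma)=\|\Sigma\|\,{\bf r}(\Sigma)$; hence $\|\hat\Sigma\|\ge \trace(\hat\Sigma)/n\gtrsim \|\Sigma\|{\bf r}(\Sigma)/n \gg \|\Sigma\|$, and the reverse triangle inequality yields $\|\hat\Sigma-\Sigma\|\gtrsim \|\Sigma\|{\bf r}(\Sigma)/n$.

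The main obstacle I anticipate is the upper bound in the infinite-dimensional setting: naive applications of matrix Bernstein or non-commutative Khintchine inequalities carry a $\log(\dim)$ penalty that is fatal when $\mathbb{H}$ is infinite-dimensional, so one must exploit the Gaussian structure explicitly, e.g.\ through generic chaining in the pseudo-metric $d(u,v)^2 = \|\Sigma^{1/2}(u\otimes u - v\otimes v)\Sigma^{1/2}\|_2^2$ on the unit sphere, in order to obtain a truly dimension-free constant governed solely by ${\bf r}(\Sigma)$.
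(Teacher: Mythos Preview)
The paper does not prove this theorem at all: it is quoted verbatim as a result of Koltchinskii and Lounici, \emph{Bernoulli} (2017), and used as a black box throughout. So there is no ``paper's own proof'' to compare against; the relevant comparison is with the argument in that reference.

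Your overall strategy---supremum of a centred Gaussian chaos, a chaining/complexity bound producing the $\sqrt{{\bf r}(\Sigma)/n}$ term, a sub-exponential diameter producing the ${\bf r}(\Sigma)/n$ term, and separate regime-wise lower bounds---matches the shape of the Koltchinskii--Lounici proof. Your lower-bound arguments are essentially correct and close to theirs.

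There is, however, a genuine gap in your upper-bound sketch. The map $(X_1,\dots,X_n)\mapsto\|\hat\Sigma-\Sigma\|$ is \emph{quadratic} in the Gaussian inputs, not Lipschitz, and writing $X_j=\Sigma^{1/2}\xi_j$ does not change this; Borell--TIS therefore does not apply directly and cannot be used in the way you describe to reduce deviations to the weak variance $\sup_{\|u\|=1}\Var(\langle X,u\rangle^2)$. The actual proof in the cited paper avoids this by working directly with the process $u\mapsto \langle(\hat\Sigma-\Sigma)u,u\rangle$ and invoking generic chaining for processes with \emph{mixed} (sub-gaussian plus sub-exponential) tail increments, in the spirit of Mendelson and Dirksen, together with Talagrand's majorizing-measures theorem to identify the $\gamma_2$-functional of the index set $\{\Sigma^{1/2}u:\|u\|\le 1\}$ with $\mathbb{E}\sup_u \langle \Sigma^{1/2}u,g\rangle\asymp (\mathbb{E}\|X\|^2)^{1/2}$. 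This is precisely the mechanism that yields a dimension-free constant. You correctly flag this as the main obstacle in your final paragraph, but your proposed device for handling it (Borell--TIS on a Lipschitz map) is not the one that works; the chaining has to be carried out with two metrics simultaneously, not reduced to Gaussian concentration around the mean.
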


This result shows that the size of the properly rescaled operator norm deviation 
of $\hat \Sigma$ from $\Sigma,$ $\frac{{\mathbb E}\|\hat \Sigma-\Sigma\|}{\|\Sigma\|},$
is characterized up to numerical constants by the ratio $\frac{{\bf r}(\Sigma)}{n}.$
In particular, the condition ${\bf r}(\Sigma)=o(n)$ is necessary and sufficient 
for operator norm consistency of $\hat \Sigma$ as an estimator of $\Sigma.$
In addition to this, the following concentration inequality for  $\|\hat \Sigma-\Sigma\|$
around its expectation was also proved in \cite{KoltchinskiiLouniciPCABernoulli}.

\begin{thm}
Under the conditions of the previous theorem, for all $t\geq 1$ with probability 
at least $1-e^{-t}$
\begin{equation}
\label{conc_hat_Sigma}
\Bigl|\|\hat \Sigma-\Sigma\|-{\mathbb E}\|\hat \Sigma-\Sigma\|\Bigr| \lesssim 
\|\Sigma\| \biggl(\biggl(\sqrt{\frac{{\bf r}(\Sigma)}{n}} \bigvee 1\biggr)\sqrt{\frac{t}{n}}
\bigvee \frac{t}{n}\biggr).
\end{equation}
\end{thm}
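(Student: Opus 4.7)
My plan is to obtain this concentration inequality via the Gaussian concentration of measure inequality applied to $F(X_1,\ldots,X_n):=\|\hat\Sigma-\Sigma\|$ on the Cameron--Martin space of the Gaussian sample $(X_1,\ldots,X_n)\sim\pp_\Sigma^{\otimes n}$. A naive Lipschitz analysis produces a global constant of order $\|\Sigma\|\sqrt{{\bf r}(\Sigma)/n}$, which is too large by a factor of $\sqrt n$ to match \eqref{conc_hat_Sigma}. The key improvement is a sharper \emph{local} Lipschitz bound whose magnitude is itself controlled by $\|\hat\Sigma\|$, combined with truncation on a high-probability event of the form $\{\|\hat\Sigma\|\lesssim\|\Sigma\|(1+{\bf r}(\Sigma)/n+t/n)\}$.

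For the local Lipschitz estimate, I would use the identity $\langle a,u\rangle^2-\langle b,u\rangle^2=\langle a+b,u\rangle\langle a-b,u\rangle$, the variational formula $\|A\|=\sup_{\|u\|=1}|\langle Au,u\rangle|$ for self-adjoint $A$, Cauchy--Schwarz in the $j$-sum, and $\sup_{\|u\|=1}\sum_j\langle X_j,u\rangle^2=n\|\hat\Sigma\|$ to derive
\[
|F(X)-F(\tilde X)|\le\sqrt{\tfrac{2(\|\hat\Sigma_X\|+\|\hat\Sigma_{\tilde X}\|)}{n}}\,\Bigl(\sum_j\|X_j-\tilde X_j\|^2\Bigr)^{1/2}.
\]
Bounding the rightmost factor by $\|\Sigma\|^{1/2}\|X-\tilde X\|_{\rm CM}$ in the Cameron--Martin norm (using $\|X_j-\tilde X_j\|\le\|\Sigma\|^{1/2}\|\Sigma^{-1/2}(X_j-\tilde X_j)\|$), it follows that on $\Omega_K:=\{\|\hat\Sigma\|\le K\}$, $F$ is Lipschitz in $\|\cdot\|_{\rm CM}$ with constant $L\le 2\sqrt{\|\Sigma\|K/n}$. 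Extending $F|_{\Omega_K}$ to a globally $L$-Lipschitz function on $\mathbb H^n$ via McShane--Whitney, the Gaussian concentration (Borell--Tsirelson) inequality then gives
\[
\pp\bigl(|F-\ee F|>L\sqrt{2s}\bigr)\le 2e^{-s}+\pp(\Omega_K^c),\qquad s>0.
\]

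To close the argument, I would pick $K\asymp\|\Sigma\|(1+{\bf r}(\Sigma)/n+t/n)$ so that $\pp(\Omega_K^c)\le e^{-t}$, and take $s=t$. Unpacking $L\sqrt t$ yields
\[
L\sqrt t\lesssim\|\Sigma\|\Bigl[\bigl(\sqrt{{\bf r}(\Sigma)/n}\vee 1\bigr)\sqrt{t/n}\,\vee\, t/n\Bigr],
\]
as desired. The main obstacle is precisely the control of $\|\hat\Sigma\|$ with this sub-exponential tail, which is of essentially the same quality as the inequality being proved. One resolves the circularity by a bootstrap: the crude global Lipschitz constant of $F$ (of order $\|\Sigma\|\sqrt{{\bf r}(\Sigma)/n}$), combined with Borell's inequality and the expectation bound of Theorem~\ref{KL-Bernoulli}, produces a sub-exponential tail for $\|\hat\Sigma-\Sigma\|$ that is off by a factor $\sqrt n$ from the claim but already sufficient to calibrate $K$ at the required level, after which the sharper local bound above delivers the claimed concentration.
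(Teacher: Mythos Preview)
The paper does not actually prove this theorem; it is quoted from \cite{KoltchinskiiLouniciPCABernoulli}. Your overall strategy---Gaussian concentration for $F(Z)=\|\hat\Sigma-\Sigma\|$ with a \emph{local} Lipschitz constant governed by $\|\hat\Sigma\|$, then truncation on a high-probability event $\{\|\hat\Sigma\|\le K\}$---is precisely the argument of that paper, and your local Lipschitz computation is correct.

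The bootstrap step, however, has a genuine gap. You claim $F$ has a ``crude global Lipschitz constant of order $\|\Sigma\|\sqrt{{\bf r}(\Sigma)/n}$'', but $F$ is \emph{not} globally Lipschitz in the Cameron--Martin norm: along rays $Z_j=c\,e$ one has $\|\hat\Sigma\|\asymp c^2$, so $F$ grows quadratically in $\|Z\|$. Hence Borell's inequality cannot be applied to $F$ globally, and you get no crude sub-exponential tail for $\|\hat\Sigma\|$ this way. The circularity is real and is not broken by your proposed first pass.

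The correct resolution (and the one used in \cite{KoltchinskiiLouniciPCABernoulli}) is to bootstrap via $\|\hat\Sigma\|^{1/2}$, which \emph{is} globally Lipschitz. Indeed,
\[
\|\hat\Sigma\|^{1/2}=\sup_{\|u\|=1}\frac{1}{\sqrt n}\Bigl(\sum_{j=1}^n\langle Z_j,\Sigma^{1/2}u\rangle^2\Bigr)^{1/2}
\]
is a supremum of seminorms of $Z\in\hh^n$, each with Lipschitz constant $\|\Sigma^{1/2}u\|/\sqrt n\le\|\Sigma\|^{1/2}/\sqrt n$. Gaussian concentration then gives, for all $t\ge 1$,
\[
\|\hat\Sigma\|^{1/2}\le \ee\|\hat\Sigma\|^{1/2}+\|\Sigma\|^{1/2}\sqrt{2t/n}
\]
with probability at least $1-e^{-t}$, and combining with $\ee\|\hat\Sigma\|^{1/2}\le(\|\Sigma\|+\ee\|\hat\Sigma-\Sigma\|)^{1/2}$ and Theorem~\ref{KL-Bernoulli} yields $\|\hat\Sigma\|\lesssim\|\Sigma\|(1+{\bf r}(\Sigma)/n+t/n)$, i.e.\ exactly the $K$ you need. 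After this, your McShane--Whitney extension and second application of Borell's inequality go through. A minor additional point you should not skip: replacing $\ee\tilde F$ by $\ee F$ after the extension requires bounding $\ee|F-\tilde F|{\bf 1}_{\Omega_K^c}$, which is done by Cauchy--Schwarz together with a moment bound for $F$ and the exponential smallness of $\pp(\Omega_K^c)$.
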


It immediately follows from the bounds \eqref{expect_hat_Sigma} and \eqref{conc_hat_Sigma} that, for all $t\geq 1$ with probability at least $1-e^{-t}$
\begin{equation}
\label{exp_bd_Sigma}
\|\hat \Sigma-\Sigma\|\lesssim \|\Sigma\|
\biggl(\sqrt{\frac{{\bf r}(\Sigma)}{n}} \bigvee \frac{{\bf r}(\Sigma)}{n}
\bigvee \sqrt{\frac{t}{n}}\bigvee \frac{t}{n}\biggr)
\end{equation}
and, for all $p\in [1,\infty),$ 
\begin{equation}
\label{hatSigmap}
{\mathbb E}^{1/p}\|\hat \Sigma-\Sigma\|^p \lesssim_p 
\|\Sigma\|
\biggl(\sqrt{\frac{{\bf r}(\Sigma)}{n}} \bigvee \frac{{\bf r}(\Sigma)}{n}\biggr).
\end{equation}

\subsection{Perturbation theory and empirical spectral projections.}

The covariance operator $\Sigma$ is self-adjoint, positively semidefinite 
and nuclear. It has spectral decomposition 
$$
\Sigma=\sum_{r\geq 1} \mu_r P_r,
$$ 
where $\mu_r$ are distinct strictly positive eigenvalues of $\Sigma$
arranged in decreasing order 
and $P_r$ are the corresponding spectral projection operators.
For $r\geq 1,$ $P_r$ is an orthogonal projection on the eigenspace 
of the eigenvalue $\mu_r.$ The dimension of this eigenspace is finite and 
will be denoted by $m_r.$
The eigenspaces corresponding to different eigenvalues $\mu_r$ are 
mutually orthogonal. Denote by $\sigma(\Sigma)$ the spectrum of operator 
$\Sigma$ and let $\lambda_j=\lambda_j(\Sigma), j\geq 1$ be the eigenvalues of 
$\Sigma$ arranged in a non-increasing order and repeated with their multiplicities. Denote $\Delta_r:=\{j: \lambda_j =\mu_r\}, r\geq 1.$ Then ${\rm card}(\Delta_r)=m_r.$
The $r$-th spectral gap is defined as 
$$g_r= g_r(\Sigma):={\rm dist}(\mu_r; \sigma(\Sigma)\setminus \{\mu_r\}).$$ 
Let $\bar g_r=\bar g_r(\Sigma):= \min_{1\leq s\leq r} g_s.$

We turn now to the definition of empirical spectral projections of sample covariance 
$\hat \Sigma$ that could be viewed as estimators of the true spectral projections $P_r, r\geq 1.$ In \cite{KoltchinskiiLouniciPCAAHP}, the following definition was used:
let $\hat P_r$ be the orthogonal projection on the direct sum of eigenspaces 
of $\hat \Sigma$ corresponding to its eigenvalues 
$\{\lambda_j(\hat \Sigma): j\in \Delta_r\}.$ This is not a perfect 
definition of a statistical estimator since the set $\Delta_r$ is 
unknown and it has to be recovered from the spectrum $\sigma(\hat \Sigma)$
of $\hat \Sigma.$

When $\hat \Sigma$ is close to $\Sigma$ in the operator norm, the spectrum 
$\sigma(\hat \Sigma)$ of $\hat \Sigma$ is a small perturbation of the spectrum 
$\sigma(\Sigma)$ of $\Sigma.$ This could be quantified by the following inequality
that goes back to H. Weyl:
\begin{equation}
\label{H. Weyl}
\sup_{j\geq 1}|\lambda_j(\hat \Sigma)-\lambda_j(\Sigma)|
\leq \|\hat \Sigma-\Sigma\|.
\end{equation}
It easily follows from this inequality that, if $\|\hat \Sigma-\Sigma\|$
is sufficiently small, then the eigenvalues $\lambda_j(\hat \Sigma)$
of $\hat \Sigma$ form well separated clusters around the eigenvalues 
$\mu_1, \mu_2, \dots$ of $\Sigma.$ To make the last claim more 
precise, consider a finite or countable bounded set $A\subset {\mathbb R}_+$
such that $0\in A$ and $0$ is the only limit point (if any) of $A.$ 
Given $\delta>0,$ define 
$
\lambda_{\delta}(A):= \max\bigl\{\lambda \in A: (\lambda-\delta,\lambda)\cap A=\emptyset\bigr\}
$
and let $T_{\delta}(A) := A\setminus [0,\lambda_{\delta}(A)).$
The set $T_{\delta}(A)$ will be called the top $\delta$-cluster of $A.$
Let $A_1:=T_{\delta}(A), A_2:=T_{\delta}(A\setminus A_1), 
A_3:= T_{\delta}(A\setminus (A_1\cup A_2)), \dots$ and 
$\nu=\nu_{\delta}:= \min\{j: A_{j+1}=\emptyset\}.$ Obviously, $\nu<\infty.$
We will call the sets $A_1, \dots, A_{\nu}$ the $\delta$-clusters of $A.$
They provide a partition of $A$ into sets separated by the gaps of length 
at least $\delta$ and such that the gaps between the points inside each 
of the clusters are smaller than $\delta.$ 

The next lemma easily follows from inequality \eqref{H. Weyl}.

\begin{lemma}
\label{delta_clusters}
Let $\delta>0$ be such that, for some $r\geq 1,$ 
$$
\|\hat \Sigma-\Sigma\|< \delta/2\ \ {\rm and}\ \ \delta<\frac{\bar g_r}{2}.
$$
Let $\hat A_1^{\delta}, \dots, \hat A_{\nu}^{\delta}$ be the $\delta$-clusters of the set $\sigma(\hat \Sigma).$ Then $\nu \geq r$ and, for all $1\leq s\leq r$
$$
\hat A_s^{\delta} \subset (\mu_s-\delta/2, \mu_s+\delta/2)\ \ {\rm and}\ \
\{j: \lambda_j(\hat \Sigma)\in \hat A_s^{\delta}\}=\Delta_s.
$$
\end{lemma}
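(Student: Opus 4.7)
The plan is to deduce both assertions from Weyl's inequality \eqref{H. Weyl} by induction on $s=1,\dots,r$. Under the hypothesis $\|\hat\Sigma-\Sigma\|<\delta/2$, Weyl's bound gives $|\lambda_j(\hat\Sigma)-\lambda_j(\Sigma)|<\delta/2$ for every $j\ge 1$. In particular, for each $s\le r$ and each $j\in\Delta_s$ we immediately get $\lambda_j(\hat\Sigma)\in I_s:=(\mu_s-\delta/2,\mu_s+\delta/2)$, which takes care of the containment $\hat A_s^{\delta}\subset I_s$ once we have identified the cluster with $\{\lambda_j(\hat\Sigma):j\in\Delta_s\}$.

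The main geometric observation is that the intervals $I_1,\dots,I_r$ are well separated: since $\bar g_r>2\delta$, consecutive centres $\mu_s,\mu_{s+1}$ (for $s<r$) satisfy $\mu_s-\mu_{s+1}\ge g_s\ge \bar g_r>2\delta$, so the gap between $I_s$ and $I_{s+1}$ exceeds $\delta$; the same bound separates $I_r$ from all points $\lambda_j(\hat\Sigma)$ with $j\notin\Delta_1\cup\dots\cup\Delta_r$, which lie in $[0,\mu_{r+1}+\delta/2]$ (or equal $0$). Conversely, all sample eigenvalues in $\Delta_s$ sit inside the open interval $I_s$ of length $\delta$, so any two of them are strictly less than $\delta$ apart.

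With this geometry in place, the induction proceeds mechanically. At step $s$, the set $B_s:=\sigma(\hat\Sigma)\setminus(\hat A_1^{\delta}\cup\cdots\cup\hat A_{s-1}^{\delta})$ is, by the inductive hypothesis, $\sigma(\hat\Sigma)\setminus\{\lambda_j(\hat\Sigma):j\in\Delta_1\cup\cdots\cup\Delta_{s-1}\}$. One then checks that $\lambda_{\delta}(B_s)$ equals the smallest element of $I_s\cap\sigma(\hat\Sigma)=\{\lambda_j(\hat\Sigma):j\in\Delta_s\}$: this candidate lies in $B_s$, has no other element of $B_s$ within distance $\delta$ below it (by the separation argument above), and nothing strictly larger can qualify because all elements of $I_s\cap\sigma(\hat\Sigma)$ are mutually within distance less than $\delta$. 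Hence $\hat A_s^{\delta}=\{\lambda_j(\hat\Sigma):j\in\Delta_s\}$, which simultaneously gives the desired inclusion in $I_s$ and the identification $\{j:\lambda_j(\hat\Sigma)\in\hat A_s^{\delta}\}=\Delta_s$. Iterating up to $s=r$ forces $\nu\ge r$, completing the proof.

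I do not anticipate a serious obstacle here; the only point requiring care is the bookkeeping around the paper's definition of $\lambda_{\delta}(\cdot)$ and $T_{\delta}(\cdot)$, in particular verifying that at each induction step the \emph{largest} $\lambda$ with $(\lambda-\delta,\lambda)\cap B_s=\emptyset$ is indeed the bottom of the $s$-th block and not some point higher up inside it.
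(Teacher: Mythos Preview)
Your proposal is correct and is precisely the argument the paper has in mind: the paper does not give a detailed proof, only noting that the lemma ``easily follows from inequality \eqref{H. Weyl}''. Your induction, driven by Weyl's inequality and the separation $\bar g_r>2\delta$ between the intervals $I_s$, fills in exactly those details, including the one subtle point you flag about identifying $\lambda_\delta(B_s)$ with the minimum of $\{\lambda_j(\hat\Sigma):j\in\Delta_s\}$.
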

 
Given $\delta>0$ and $\delta$-clusters $\hat A_1^{\delta}, \dots, \hat A_{\nu}^{\delta}$ of $\sigma(\hat \Sigma),$ define, for $1\leq s\leq \nu,$ the empirical spectral projection $\hat P_s^{\delta}$ 
as the orthogonal projection on the direct sum of eigenspaces of $\hat \Sigma$
corresponding to its eigenvalues from the cluster $\hat A_s^{\delta}.$ 
It immediately follows from Lemma \ref{delta_clusters} that, under its assumptions
on $\delta,$ $\hat P_s^{\delta}= \hat P_s, s=1,\dots, r.$ 

In the following sections, we will be interested in the problem of estimation of 
spectral projections in the case when the true covariance $\Sigma$ belongs 
to certain subsets of the following class of covariance operators:
$$
{\mathcal S}^{(r)}({\frak r};a):=\bigl\{\Sigma: {\bf r}(\Sigma)\leq {\frak r}, \frac{\|\Sigma\|}{\bar g_r(\Sigma)}\leq a\bigr\}, 
$$
where $a>1, {\frak r}>1.$ We will allow the effective rank to be large,  
${\frak r}={\frak r}_n\to \infty,$ but not too large such that ${\frak r}_n=o(n)$
as $n\to \infty.$ For $\Sigma\in {\mathcal S}^{(r)}({\frak r};a),$
we take $\delta:= \tau \|\hat \Sigma\|$ for a sufficiently small 
value of the constant $\tau>0$ in the definition of spectral projections
$\hat P_s^{\delta}.$ 

The following lemma is an easy consequence of the exponential bound \eqref{exp_bd_Sigma}.

\begin{lemma}
\label{hatPdelta}
Suppose $a>1$ and ${\frak r}_n=o(n)$ as $n\to \infty.$
Take $\tau \in \bigl(0, \frac{1}{4a}\wedge 2\bigl)$ and 
$\delta:=\tau \|\hat \Sigma\|.$ Then, there exists a numerical constant $\beta>0$
such that, for all large enough $n,$ 
$$
\sup_{\Sigma\in {\mathcal S}^{(r)}({\frak r};a)}{\mathbb P}_{\Sigma}
\{\exists s=1,\dots, r: \hat P_s^{\delta}\neq \hat P_s\}\leq e^{-\beta \tau^2 n}.
$$
\end{lemma}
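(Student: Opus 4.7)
The plan is to reduce the probabilistic statement to a deterministic perturbation criterion via Lemma \ref{delta_clusters}, and then apply the operator-norm concentration bound \eqref{exp_bd_Sigma} with a carefully chosen deviation level.

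First, I would observe that Lemma \ref{delta_clusters} gives the deterministic implication: if simultaneously $\|\hat\Sigma-\Sigma\|<\delta/2$ and $\delta<\bar g_r/2$, then $\hat P_s^\delta=\hat P_s$ for every $s=1,\dots,r$. So it suffices to bound the probability of the complementary event on these two inequalities. With $\delta=\tau\|\hat\Sigma\|$ this becomes the two requirements $\|\hat\Sigma-\Sigma\|<\tau\|\hat\Sigma\|/2$ and $\tau\|\hat\Sigma\|<\bar g_r/2$.

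Next I would show that both requirements are implied by a single inequality of the form $\|\hat\Sigma-\Sigma\|\leq (\tau/4)\|\Sigma\|$. Using $|\,\|\hat\Sigma\|-\|\Sigma\|\,|\leq\|\hat\Sigma-\Sigma\|$, this inequality places $\|\hat\Sigma\|$ in the sandwich $(1-\tau/4)\|\Sigma\|\leq\|\hat\Sigma\|\leq(1+\tau/4)\|\Sigma\|$; since $\tau<2$ the lower bound is at least $\|\Sigma\|/2$, giving $\|\hat\Sigma-\Sigma\|\leq(\tau/4)\|\Sigma\|\leq(\tau/2)\|\hat\Sigma\|$, which is the first requirement with strict inequality (after possibly shrinking the constant slightly). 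For the second, using $\Sigma\in\mathcal{S}^{(r)}(\mathfrak r;a)$ and $\tau<1/(4a)$,
$$
\tau\|\hat\Sigma\|\leq\tau(1+\tau/4)\|\Sigma\|\leq\tfrac{3\tau}{2}\|\Sigma\|\leq\tfrac{3\tau a}{2}\bar g_r<\tfrac{\bar g_r}{2}.
$$

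Finally I would invoke \eqref{exp_bd_Sigma} with deviation level $t=\beta\tau^2 n$ for $\beta$ a small numerical constant to be chosen. On the event of probability $\geq 1-e^{-\beta\tau^2 n}$ this yields
$$
\|\hat\Sigma-\Sigma\|\leq C\|\Sigma\|\Bigl(\sqrt{\mathfrak r/n}\vee\mathfrak r/n\vee\sqrt{\beta}\,\tau\vee\beta\tau^2\Bigr).
$$
Choosing $\beta$ small enough (independent of $\tau$ and of $\Sigma$) makes the last two terms at most $\tau/8$, and the hypothesis $\mathfrak r_n=o(n)$ makes the first two terms at most $\tau/8$ for all $n$ large enough, uniformly in $\Sigma\in\mathcal{S}^{(r)}(\mathfrak r;a)$. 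Combining, $\|\hat\Sigma-\Sigma\|\leq(\tau/4)\|\Sigma\|$ on this event, as required. One must also check $t=\beta\tau^2 n\geq 1$, which holds for $n$ large.

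There is no real obstacle here beyond the bookkeeping of constants: the whole argument is a deterministic perturbation lemma paired with the existing sharp operator-norm concentration inequality for $\hat\Sigma-\Sigma$. The only point to watch is ensuring the small constant in front of $\|\Sigma\|$ in the concentration bound absorbs all four terms (two from $\mathfrak r/n$ and two from $t/n$) into a multiple of $\tau$, which is why both the smallness of $\beta$ and the asymptotic condition $\mathfrak r_n=o(n)$ are needed.
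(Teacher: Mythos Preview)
Your proposal is correct and follows essentially the same route as the paper: reduce via Lemma~\ref{delta_clusters} to the event $\|\hat\Sigma-\Sigma\|<(\tau/4)\|\Sigma\|$, verify that this event forces both $\|\hat\Sigma-\Sigma\|<\delta/2$ and $\delta<\bar g_r/2$ via the sandwich on $\|\hat\Sigma\|$, and then apply \eqref{exp_bd_Sigma} with $t=\beta\tau^2 n$ for a small numerical $\beta$. The paper's version is slightly terser (it uses the cruder sandwich $\tfrac12\|\Sigma\|\le\|\hat\Sigma\|\le 2\|\Sigma\|$ and fixes $\beta=1/(16C^2)$ explicitly), but the logic is identical.
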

\begin{proof}
By \eqref{exp_bd_Sigma} with $t:= \beta \tau^2 n,$ we obtain that
$$
\sup_{\Sigma \in {\mathcal S}^{(r)}({\frak r};a)}
{\mathbb P}_{\Sigma}\bigl\{\|\hat \Sigma-\Sigma\|\geq C\|\Sigma\|
\bigg(\sqrt{\frac{{\frak r_n}}{n}}\bigvee \sqrt{\frac{\beta \tau^2 n}{n}}\biggr)\bigr\}\leq e^{-\beta \tau^2 n},
$$
where $C>0$ is a numerical constant. Take $\beta = \frac{1}{16C^2}$ and 
note that, for all large enough $n,$ $C\sqrt{\frac{{\frak r_n}}{n}}\leq \tau/4$
to obtain that 
$$
\sup_{\Sigma \in {\mathcal S}^{(r)}({\frak r};a)}
{\mathbb P}_{\Sigma}\{\|\hat \Sigma-\Sigma\|\geq (\tau/4) \|\Sigma\|\}
\leq e^{-\beta \tau^2 n},
$$
Since $\tau/4 \leq 1/2,$ we easily obtain that, for all $\Sigma \in {\mathcal S}^{(r)}({\frak r};a)$
and for all $n$ large enough with probability at least $1-e^{-\beta \tau^2 n},$ 
$(1/2)\|\Sigma\|\leq \|\hat \Sigma\|\leq 2\|\Sigma\|.$
This implies that with the same probability (and on the same event)
$$
\|\hat \Sigma-\Sigma\|< (\tau/4)\|\Sigma\|\leq (\tau/2) \|\hat \Sigma\|=\delta/2.
$$
On the other hand, for all $\Sigma \in {\mathcal S}^{(r)}({\frak r};a),$
$$
\delta = \tau \|\hat \Sigma\|\leq 2\tau \|\Sigma\|
< \frac{1}{2a} \|\Sigma\| \leq \frac{\bar g_r(\Sigma)}{2}.
$$
It remains to use Lemma \ref{delta_clusters} to complete
the proof.
\end{proof} 
 
In the proofs of the main results of the paper, we deal for the most part with spectral projections 
$\hat P_r$ that were studied in detail in \cite{KoltchinskiiLouniciPCAAHP}. We  
use Lemma \ref{hatPdelta} to reduce the results for $\hat P_r^{\delta}$ to the results for $\hat P_r.$ 

\section{Main Results}

Our main goal is to develop an efficient estimator of the linear functional 
$\langle \theta_r, u\rangle,$ where $u\in {\mathbb H}$ is a given vector and $\theta_r=\theta_r(\Sigma)$ is a unit eigenvector of the  unknown covariance operator $\Sigma$ corresponding to its $r$--th eigenvalue $\mu_r,$ which is assumed to be simple (that is, of multiplicity $m_r=1$). The corresponding spectral projection $P_r$ is one-dimensional:
$P_r=\theta_r \otimes \theta_r.$  A ``naive" plug-in estimator of $P_r$ is the empirical 
spectral projection $\hat P_r^{\delta}$ with $\delta= \tau\|\hat \Sigma\|$ for a suitable 
choice of a small constant $\tau,$ as described in Lemma \ref{hatPdelta}. According 
to this lemma and under its assumptions, $\hat P_r^{\delta}$ coincides with a high
probability with the one-dimensional empirical spectral projection $\hat P_r:=\hat \theta_r\otimes \hat \theta_r,$ where $\hat \theta_r$ is the corresponding unit eigenvector 
of $\hat \Sigma.$ As an estimator of $\theta_r,$ 
we can use an arbitrary unit vector $\hat \theta_r^{\delta}$
from the eigenspace ${\rm Im}(\hat P_r^{\delta}),$ which with a high-probability 
coincides with $\pm \hat \theta_r$ (under conditions of Lemma \ref{hatPdelta}). 
In case $r=1,$ when 
the top eigenvalue $\mu_1=\|\Sigma\|$ of $\Sigma$ is simple and the goal 
is to estimate a linear functional of the top principal component $\theta_1,$
there is no need to use $\delta$-clusters to define an estimator of $\theta_1$ since $\hat \theta_1$
(a unit eigenvector in the eigenspace of the top eigenvalue $\|\hat \Sigma\|$ of $\hat \Sigma$) is already a legitimate estimator.  

Note that both $\theta_r$
and $-\theta_r$ are unit eigenvectors of $\Sigma,$ so, strictly speaking, $\langle \theta_r,u\rangle$ can be estimated only up to its sign. 
In what follows, we assume that $\hat \theta_r^{\delta}$ and $\theta_r$
(or, whenever is needed, $\hat \theta_r$ and $\theta_r$) are \emph{properly aligned} in the sense 
that 
$\langle \hat \theta_r^{\delta}, \theta_r\rangle \geq 0$ (which is always the case either for $\theta_r,$ or for $-\theta_r$). This allows
us to view $\langle \hat \theta_r^{\delta}, u\rangle$ as an estimator of $\langle \theta_r,u\rangle.$

It was shown in \cite{KoltchinskiiLouniciPCAAHP} that ``naive" plug-in estimators 
of the functional $\langle \theta_r,u\rangle,$ such as $\langle \hat \theta_r^{\delta},u\rangle$ or $\langle \hat \theta_r,u\rangle,$ are biased with the bias becoming substantial enough to affect the efficiency 
of the estimator or even its convergence rates as soon as the effective rank 
is large enough, namely, ${\bf r}(\Sigma)\gtrsim n^{1/2}.$ Moreover, 
it was shown that the quantity 
$$
b_r = b_r(\Sigma):={\mathbb E}_{\Sigma}\langle \hat \theta_r, \theta_r\rangle^2 -1 \in [-1,0]
$$
plays the role of a bias parameter. In particular, the results of \cite{KoltchinskiiLouniciPCAAHP} imply that the random variable 
$\langle \hat \theta_r, u\rangle$ concentrates around $\sqrt{1+b_r}\langle \theta_r,u\rangle$ (rather than 
around $\langle \theta_r,u\rangle$) with the size of the deviations of order $O(n^{-1/2})$
provided that ${\bf r}(\Sigma)=o(n)$ as $n\to \infty.$ Thus, the bias of $\langle \hat \theta_r, u\rangle$ as an estimator of $\langle \theta_r,u\rangle$ is of the order 
$(\sqrt{1+b_r}-1) \langle \theta_r,u\rangle \asymp b_r \langle \theta_r,u\rangle.$
It was shown in \cite{KoltchinskiiLouniciPCAAHP} that $|b_r| \lesssim \frac{{\bf r}(\Sigma)}{n}$ and it will be proved below in this paper that, in fact, $|b_r| \asymp \frac{{\bf r}(\Sigma)}{n}$
(see Lemma \ref{brbou} and bounds \eqref{bd_Ar_up}, \eqref{bd_Ar_down}). 
This fact implies that, indeed, the bias of $\langle \hat \theta_r,u\rangle$
(and of  $\langle \hat \theta_r^{\delta},u\rangle$) is not negligible and affects the convergence rate as soon as $\frac{{\bf r}(\Sigma)}{n^{1/2}}\to \infty.$
This resembles the situation in sparse regression (see e.g. \cite{JavanmardMontanari, vdgBuhlmannRitovDezeureAOS, ZhangZhang}): If $p$ denotes the dimension of the model and $s$ its sparsity and if $s \log(p)=o(n^{1/2}),$ the bias of a de-sparsified LASSO estimator for the regressor $\beta$ is negligible, which makes it possible to prove asymptotic normality of linear forms of $\beta$. 
On the other hand, if $s \log(p) \gg n^{1/2},$ Cai and Guo \cite{CaiGuo} proved that adaptive confidence sets for linear forms  do not exist in general. This implies that any attempt to further de-bias the de-sparsified LASSO or any other estimator to prove asymptotic normality is deemed to fail. 
Contrary to this, in our case estimation of the bias parameter $b_r$ is possible
(as will be shown below).

We will state 
a uniform (and somewhat stronger) version of some of the results
of \cite{KoltchinskiiLouniciPCAAHP} on asymptotic normality of linear forms 
$$
\sqrt{n}(\langle \hat \theta_r^{\delta},u\rangle- \sqrt{1+b_r(\Sigma)}\langle\theta_r (\Sigma),u\rangle), u\in {\mathbb H}
$$
under the assumption that ${\bf r}(\Sigma) =o(n).$ 
To this end, define the following operator
$$
C_r := \sum_{s\neq r} \frac{1}{\mu_r-\mu_s}P_s,
$$
which is bounded with $\|C_r\|=\frac{1}{g_r}.$
Denote 
$$
\sigma_r^2 (\Sigma;u):= \langle \Sigma \theta_r, \theta_r\rangle \langle \Sigma C_r u, C_r u\rangle
=\mu_r \langle \Sigma C_r u, C_ru\rangle.
$$
Clearly,
\begin{equation}
\label{bd_sigma_r}
\sigma_r^2 (\Sigma;u)\leq \frac{\|\Sigma\|^2}{g_r^2} \|u\|^2.
\end{equation}
Note that, if ${\mathbb H}$ is finite-dimensional (with a fixed dimension) and $\Sigma$ is non-singular, 
then the Fisher information for the model $X\sim N(0;\Sigma)$ is ${\mathbb  I}(\Sigma)
=\frac{1}{2}(\Sigma^{-1}\otimes \Sigma^{-1})$ (see, e.g., \cite{Eaton}). The maximum likelihood estimator $\hat \Sigma$ based on $n$ i.i.d. observations of $X$ (the sample covariance) is then asymptotically normal with $\sqrt{n}$-rate and limit covariance 
${\mathbb I}(\Sigma)^{-1}=2(\Sigma \otimes \Sigma).$ An application of the Delta Method to the smooth function 
$g(\Sigma):= \langle \theta_r(\Sigma), u\rangle$ shows that $g(\hat \Sigma)$ is 
also asymptotically normal with limiting variance 
$\bigl\langle ({\mathbb I}(\Sigma)^{-1}g'(\Sigma), g'(\Sigma)\bigr\rangle,$
which turns out to be equal to $\sigma_r^2(\Sigma;u).$

For $u\in {\mathbb H},$ ${\frak r}>1,$ $a>1$ and $\sigma_0>0,$ consider the following class of covariance operators in ${\mathbb H}:$
$$
{\mathcal S}^{(r)}({\frak r}, a, \sigma_0, u)
:=\bigl\{\Sigma: {\bf r}(\Sigma)\leq {\frak r}, \frac{\|\Sigma\|}{\bar g_r(\Sigma)}\leq a, \sigma_r^2(\Sigma;u)\geq \sigma_0^2\bigr\}.
$$
We emphasize here that we regard $a$ and $\sigma_0$ as fixed constants, but ${\frak r}, \| \Sigma \|$ and $\bar g_r$ may all possibly depend on $n$. For example, this allows that $\| \Sigma \| \rightarrow \infty$ as long as $\bar g_r \rightarrow \infty$ at the same rate as it is the case in factor models as considered in \cite{WangFan}.
Note that some additional conditions on 
${\frak r}, a, \sigma_0, u$ are needed for the class ${\mathcal S}^{(r)}({\frak r}, a, \sigma_0, u)$ to be nonempty. Say, bound \eqref{bd_sigma_r} implies that it is necessary
for this that $\sigma_0^2\leq a^2\|u\|^2.$ It is also obvious that there should be $a>r$
(since $\|\Sigma\|\geq r g_r(\Sigma)$).

We will also need the following assumption on the loss function $\ell.$
\begin{assumption}
\label{assump_loss}
Let $\ell : {\mathbb R}\mapsto {\mathbb R}_+$ be a loss function satisfying the following conditions: 
$\ell(0)=0,$ $\ell(u)=\ell(-u), u\in {\mathbb R},$ $\ell$ is nondecreasing and convex on ${\mathbb R}_+$
and, for some constants $c_1,c_2>0$
$$
\ell (u)\leq c_1 e^{c_2 u}, u\geq 0.
$$
\end{assumption}

The proofs to all our theorems are in fact non-asymptotic and often can be expressed by Berry-Esseen type bounds. However, for a more concise presentation we present  asymptotic statements.

In what follows, $Z$ denotes a standard Gaussian random variable
and $\Phi$ denotes its distribution function.

\begin{thm}
\label{norm_approx}
Let $u\in {\mathbb H},$ $a>1$ and $\sigma_0>0.$ Suppose that ${\frak r}_n >1$ and ${\frak r}_n =o(n)$ as $n\to\infty.$ 
Let $\delta= \tau\|\hat \Sigma\|$ for some $\tau\in 
\bigl(0, \frac{1}{4a}\wedge 2\bigl).$ 
Then 
\begin{eqnarray}
\label{norm_approx_main}
&
\nonumber
\sup_{\Sigma \in {\mathcal S}^{(r)}({\frak r}_n, a, \sigma_0, u)}\sup_{x\in {\mathbb R}}\bigl|{\mathbb P}_{\Sigma}\bigl\{\frac{\sqrt{n}(\langle \hat \theta_r^{\delta},u\rangle- \sqrt{1+b_r(\Sigma)}\langle \theta_r(\Sigma),u\rangle)}{\sigma_r(\Sigma;u)}\leq x\bigr\}-\Phi(x)\bigr| \to 0\ {\rm as}\ n\to\infty.
\end{eqnarray}
Moreover, under Assumption \ref{assump_loss},
\begin{equation}
\nonumber
\sup_{\Sigma \in {\mathcal S}^{(r)}({\frak r}_n, a, \sigma_0, u)}\bigl|{\mathbb E}_{\Sigma}\ell\biggl(\frac{\sqrt{n}(\langle \hat \theta_r^{\delta},u\rangle- \sqrt{1+b_r(\Sigma)}\langle\theta_r(\Sigma),u\rangle)}{\sigma_r(\Sigma;u)}\biggr)-{\mathbb E}\ell(Z)\bigr| \to 0\ {\rm as}\ n\to\infty.
\end{equation}
\end{thm}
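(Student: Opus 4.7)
The plan is to first remove the $\delta$-cluster construction, then linearize $\langle \hat\theta_r,u\rangle$ via Riesz-projection perturbation theory, identify the leading stochastic term as an i.i.d. sum for which a uniform CLT applies, and absorb the dominant deterministic second-order contribution into the centering $\sqrt{1+b_r(\Sigma)}\langle\theta_r,u\rangle$.

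\medskip\noindent\textbf{Step 1 (reduction to $\hat\theta_r$).} By Lemma \ref{hatPdelta}, uniformly over $\Sigma\in{\mathcal S}^{(r)}({\frak r}_n;a)$, the event $\{\hat P_s^{\delta}=\hat P_s\text{ for all }s\leq r\}$ has probability at least $1-e^{-\beta\tau^2 n}$; on it, the alignment convention forces $\hat\theta_r^{\delta}=\hat\theta_r$. The complement is negligible for Kolmogorov distance and, thanks to Assumption~\ref{assump_loss} together with the exponential moment bound \eqref{hatSigmap}, also for the $\ell$-risk. Hence it suffices to prove both statements with $\hat\theta_r^{\delta}$ replaced by $\hat\theta_r$.

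\medskip\noindent\textbf{Step 2 (perturbation expansion).} Set $E:=\hat\Sigma-\Sigma$. On the event $\|E\|<g_r/2$ (which has probability at least $1-e^{-\beta\tau^2 n}$ by \eqref{exp_bd_Sigma}), the Riesz projection admits the expansion
$$\hat P_r = P_r + L_r(E) + S_r(E),\qquad L_r(E):=C_r E P_r+P_r E C_r,$$
with $\|S_r(E)\|\lesssim \|E\|^2/g_r^2$. Using $\hat P_r\theta_r=\langle\hat\theta_r,\theta_r\rangle\hat\theta_r$ together with the alignment $\langle\hat\theta_r,\theta_r\rangle\geq 0$,
$$\langle\hat\theta_r,u\rangle=\frac{\langle\hat P_r\theta_r,u\rangle}{\sqrt{\langle\hat P_r\theta_r,\theta_r\rangle}}.$$
Expanding numerator and denominator in $E$ and exploiting $\langle L_r(E)\theta_r,\theta_r\rangle=2\langle P_rEC_r\theta_r,\theta_r\rangle=0$ (since $C_r\theta_r=0$), $\langle L_r(E)\theta_r,u\rangle=\langle C_rE\theta_r,u\rangle$, and $\mathbb{E}\langle\hat P_r\theta_r,\theta_r\rangle=1+b_r(\Sigma)$, one obtains
$$\langle\hat\theta_r,u\rangle-\sqrt{1+b_r(\Sigma)}\langle\theta_r,u\rangle=\langle C_rE\theta_r,u\rangle+R_n(\Sigma;u),$$
in which the $\sqrt{1+b_r}$ centering is precisely tuned to cancel $\mathbb{E}\langle S_r(E)\theta_r,u\rangle$ up to lower order.

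\medskip\noindent\textbf{Step 3 (linear term CLT).} Since $C_r\theta_r=0$, the linear term is
$$\langle C_rE\theta_r,u\rangle=\frac{1}{n}\sum_{j=1}^n \langle X_j,\theta_r\rangle\langle X_j,C_r u\rangle,$$
an i.i.d. mean-zero sum (the expectation equals $\mu_r\langle\theta_r,C_r u\rangle=0$) of products of jointly Gaussian scalars. By Isserlis' theorem the per-summand variance is $\mu_r\langle\Sigma C_r u,C_r u\rangle=\sigma_r^2(\Sigma;u)\geq\sigma_0^2$, and the third absolute moment is bounded by a universal multiple of $\sigma_r^3(\Sigma;u)$ (the normalised summand is sub-exponential with universal constant). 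The Berry--Esseen theorem therefore yields Gaussian approximation of $\sqrt n\langle C_rE\theta_r,u\rangle/\sigma_r(\Sigma;u)$ at rate $O(n^{-1/2})$, uniformly over ${\mathcal S}^{(r)}({\frak r}_n,a,\sigma_0,u)$.

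\medskip\noindent\textbf{Step 4 (remainder and conclusion).} The delicate step is showing $\sqrt n R_n(\Sigma;u)/\sigma_r(\Sigma;u)=o_{\mathbb{P}_\Sigma}(1)$ uniformly over the class. The combined bound $\|S_r(E)\|\lesssim\|E\|^2/g_r^2$ with \eqref{exp_bd_Sigma} and $\|\Sigma\|/g_r\leq a$ gives, on an event of probability $1-e^{-\beta\tau^2 n}$, a deterministic contribution of order $a^2{\bf r}_n/n$ that is exactly compensated by $\sqrt{1+b_r}-1\asymp b_r$ (cf.\ Lemma~\ref{brbou} and \eqref{bd_Ar_up}, \eqref{bd_Ar_down}). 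What remains in $R_n$ is a centred quadratic form in $E$, for which the Hilbert--Schmidt concentration inequalities proved in~\cite{KoltchinskiiLouniciPCAAHP} yield fluctuations of order $o(n^{-1/2})$ under the sole hypothesis ${\bf r}_n=o(n)$. Combining Steps 1--4 gives the first conclusion. For the second, weak convergence together with uniform integrability---obtained from the exponential growth of $\ell$ and the sub-exponential tails of the studentised statistic provided by \eqref{exp_bd_Sigma}, \eqref{hatSigmap} and the Gaussian-chaos tails of the linear term---upgrades distributional convergence to convergence of $\mathbb{E}_\Sigma\ell(\cdot)$, uniformly over the class. The main obstacle is Step~4: the cancellation between $\mathbb{E}S_r(E)$ and the bias correction must be quantified non-asymptotically and uniformly over a class in which $\|\Sigma\|$ and $\bar g_r(\Sigma)$ are not a priori bounded.
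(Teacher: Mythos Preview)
Your proposal is correct and follows essentially the same route as the paper: reduction from $\hat\theta_r^{\delta}$ to $\hat\theta_r$ via Lemma~\ref{hatPdelta}; the identity $\langle\hat\theta_r,u\rangle=\langle\hat P_r\theta_r,u\rangle/\sqrt{\langle\hat P_r\theta_r,\theta_r\rangle}$ (which is exactly the representation~\eqref{represent_theta_r}); Berry--Esseen for the linear term (Lemma~\ref{Lemma:conc_L_r}); and control of the remainder via the concentration bound~\eqref{conc_S_r} for $S_r(E)-\mathbb{E}S_r(E)$ together with the bias representation~\eqref{EhatP_r}. One small correction: the cancellation in Step~4 does not rely on Lemma~\ref{brbou} (which gives the precise expansion $b_r=-\tfrac12 A_r(\Sigma)/n+\beta_r$ and is used only for Proposition~\ref{no_efficiency}); what is actually used is the decomposition $\mathbb{E}\hat P_r=(1+b_r)P_r+T_r$ with the bound~\eqref{bd_T_r} on $\|T_r\|$, which makes the compensation \emph{exact by definition} of $b_r$ rather than by matching orders.
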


The proof of this theorem will be given in Section \ref{sec:norm_approx}
that also includes a number of auxiliary statements used in the proofs of our main 
results on efficient estimation of linear functionals. 

\begin{Corollary}
\label{norm_approx_cor}
Let $u\in {\mathbb H},$ $a>1$ and $\sigma_0>0.$ Suppose that ${\frak r}_n >1$ and ${\frak r}_n =o(\sqrt{n})$ as $n\to\infty.$ 
Let $\delta= \tau\|\hat \Sigma\|$ for some $\tau\in 
\bigl(0, \frac{1}{4a}\wedge 2\bigl).$ 
Then 
\begin{eqnarray}
\label{norm_approx_main_cor}
&
\nonumber
\sup_{\Sigma \in {\mathcal S}^{(r)}({\frak r}_n, a, \sigma_0, u)}\sup_{x\in {\mathbb R}}\bigl|{\mathbb P}_{\Sigma}\bigl\{\frac{\sqrt{n}(\langle \hat \theta_r^{\delta},u\rangle- \langle \theta_r(\Sigma),u\rangle)}{\sigma_r(\Sigma;u)}\leq x\bigr\}-\Phi(x)\bigr| \to 0\ {\rm as}\ n\to\infty.
\end{eqnarray}
Moreover, under Assumption \ref{assump_loss},
\begin{equation}
\nonumber
\sup_{\Sigma \in {\mathcal S}^{(r)}({\frak r}_n, a, \sigma_0, u)}\bigl|{\mathbb E}_{\Sigma}\ell\biggl(\frac{\sqrt{n}(\langle \hat \theta_r^{\delta},u\rangle- \langle\theta_r(\Sigma),u\rangle)}{\sigma_r(\Sigma;u)}\biggr)-{\mathbb E}\ell(Z)\bigr| \to 0\ {\rm as}\ n\to\infty.
\end{equation}
\end{Corollary}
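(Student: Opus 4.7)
The plan is to deduce this corollary from Theorem \ref{norm_approx} by a Slutsky-type argument, exploiting that the only change in the centering is the deterministic (given $\Sigma$) scalar offset
\begin{equation*}
\Delta_n(\Sigma):=\frac{\sqrt{n}\,(\sqrt{1+b_r(\Sigma)}-1)\,\langle\theta_r(\Sigma),u\rangle}{\sigma_r(\Sigma;u)}.
\end{equation*}
Writing $V_n(\Sigma)$ for the normalized quantity in Theorem \ref{norm_approx} and $W_n(\Sigma)$ for the one in the corollary, we have $W_n=V_n+\Delta_n$, so the whole proof reduces to showing $\sup_\Sigma|\Delta_n(\Sigma)|\to 0$ and then transferring the conclusions of the theorem.

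First I would bound $\Delta_n$. Invoking the announced Lemma \ref{brbou} (whose upper half gives $|b_r(\Sigma)|\lesssim {\bf r}(\Sigma)/n\leq{\frak r}_n/n$), together with $|\sqrt{1+b_r}-1|\lesssim|b_r|$, the trivial $|\langle\theta_r,u\rangle|\leq\|u\|$, and $\sigma_r(\Sigma;u)\geq\sigma_0$ from the definition of the class, one obtains
\begin{equation*}
\sup_{\Sigma\in{\mathcal S}^{(r)}({\frak r}_n,a,\sigma_0,u)}|\Delta_n(\Sigma)|\;\lesssim\;\frac{{\frak r}_n\,\|u\|}{\sqrt{n}\,\sigma_0}\;\longrightarrow 0,
\end{equation*}
precisely by the strengthened hypothesis ${\frak r}_n=o(\sqrt n)$; this is exactly why the bias correction $\sqrt{1+b_r}$ becomes superfluous in this faster-rate regime (under the weaker ${\frak r}_n=o(n)$ one only obtains $|\Delta_n|=O(1)$).

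For the Kolmogorov-distance claim, since $\Delta_n(\Sigma)$ is non-random,
\begin{equation*}
|{\mathbb P}_\Sigma(W_n\leq x)-\Phi(x)|\leq |{\mathbb P}_\Sigma(V_n\leq x-\Delta_n(\Sigma))-\Phi(x-\Delta_n(\Sigma))|+|\Phi(x-\Delta_n(\Sigma))-\Phi(x)|,
\end{equation*}
and both terms vanish uniformly in $x$ and $\Sigma$: the first by Theorem \ref{norm_approx}, the second by the Lipschitz property of $\Phi$ together with $\sup_\Sigma|\Delta_n(\Sigma)|\to 0$. For the loss-function statement under Assumption \ref{assump_loss}, the uniform exponential integrability of $V_n$ that is established en route to the moment part of Theorem \ref{norm_approx} shows that the family $\{\ell(V_n+\Delta_n(\Sigma))\}_{\Sigma,n}$ is uniformly integrable; combined with uniform continuity of $\ell$ on compacts and the uniform smallness of $\Delta_n$, a standard truncation argument yields $\sup_\Sigma|{\mathbb E}_\Sigma\ell(W_n)-{\mathbb E}_\Sigma\ell(V_n)|\to 0$, after which the conclusion follows from the moment part of Theorem \ref{norm_approx}.

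The only substantive external input is the upper bound $|b_r(\Sigma)|\lesssim{\bf r}(\Sigma)/n$ from Lemma \ref{brbou}; once that is granted, no step presents a genuine obstacle and the argument is in essence a Slutsky transfer powered by the fact that $\Delta_n$ is deterministic and vanishes uniformly at the faster rate ${\frak r}_n=o(\sqrt n)$.
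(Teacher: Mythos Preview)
Your proposal is correct and is precisely the argument the paper has in mind; the corollary is stated without proof because it is meant to follow from Theorem~\ref{norm_approx} by exactly the deterministic-shift Slutsky argument you give. Two minor remarks: the upper bound $|b_r(\Sigma)|\lesssim \frac{\|\Sigma\|^2}{g_r^2}\frac{{\bf r}(\Sigma)}{n}\leq a^2\,{\frak r}_n/n$ is already available from \eqref{bd_b_r} (equivalently \eqref{bias_main}) and does not require the finer Lemma~\ref{brbou}; and for the loss statement you can bypass the informal uniform-integrability step by noting that the exponential tail bound \eqref{on_xi} for $V_n$ transfers directly to $W_n=V_n+\Delta_n$ (since $|\Delta_n|$ is uniformly bounded), so Lemma~\ref{Lemma:Bernstein-loss} and the truncation of Lemma~\ref{lemma_xi_eta} apply verbatim to $W_n$.
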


Our next goal is to provide a minimax lower bound 
on the risk of an arbitrary estimator of the linear functional 
$\langle\theta_r(\Sigma),u\rangle$ 
in the case of quadratic loss $\ell(t)=t^2, t\in 
{\mathbb R}.$ 
The proof is based on van Trees' inequality and will be given 
in Section \ref{sec:van Trees}.
Define
$$
\mathring{{\mathcal S}}^{(r)}({\frak r}, a, \sigma_0, u)
:=\bigl\{\Sigma: {\bf r}(\Sigma)<{\frak r}, \frac{\|\Sigma\|}{\bar g_r(\Sigma)}<a, \sigma_r^2(\Sigma;u)> \sigma_0^2\bigr\}, {\frak r}>1, a>1, \sigma_0^2>0,
$$
the interior of the set ${\mathcal S}^{(r)}({\frak r}, a, \sigma_0, u).$

\begin{thm} 
\label{Thm Minimax lower bounds 1}
Let ${\frak r}>1,$ $a>1$ and $\sigma_0>0.$
Suppose $\mathring{{\mathcal S}}^{(r)}({\frak r}, a, \sigma_0, u)\neq \emptyset.$
Then, for all statistics $T_n(X_1,\dots, X_n),$
\begin{align}
\nonumber
\liminf_{n \rightarrow \infty}  
\inf_{T_n} \sup_{\Sigma \in \mathring{{\mathcal S}}^{(r)}({\frak r}, a, \sigma_0, u)} 
\frac{n \mathbb{E}_\Sigma (T_n(X_1,...,X_n)-\langle \theta_r(\Sigma), u \rangle )^2}{\sigma_r^2(\Sigma;u)} \geq 1.
\end{align}
Moreover, for any $\Sigma_0\in \mathring{{\mathcal S}}^{(r)}({\frak r}, a, \sigma_0, u)$
\begin{align}
\nonumber
\lim_{\varepsilon\to 0}\liminf_{n \rightarrow \infty}  
\inf_{T_n} \sup_{\Sigma \in \mathring{{\mathcal S}}^{(r)}({\frak r}, a, \sigma_0, u),
\|\Sigma-\Sigma_0\|_1 \leq \varepsilon} 
\frac{n \mathbb{E}_\Sigma (T_n(X_1,...,X_n)-\langle \theta_r(\Sigma), u \rangle )^2}{\sigma_r^2(\Sigma;u)} \geq 1.
\end{align}
\end{thm}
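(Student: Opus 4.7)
The plan is to apply the van Trees inequality along a one-parameter Gaussian submodel $\{N(0,\Sigma_t)^{\otimes n}:|t|\leq \eta\}$ passing through an arbitrary interior point $\Sigma_0\in \mathring{{\mathcal S}}^{(r)}({\frak r},a,\sigma_0,u)$, engineered so that the Cram\'er--Rao ratio $(g'(0))^2/I_n(0)$ equals exactly $\sigma_r^2(\Sigma_0;u)/n$; here $g(t):=\langle \theta_r(\Sigma_t),u\rangle$ and $I_n(t)$ is the Fisher information of the observation model under $\Sigma_t$. This yields the first (global) assertion by restricting the supremum, and the local version follows because the sub-family can be kept in any prescribed nuclear-norm ball around $\Sigma_0$ by shrinking $\eta$.

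First I would construct the submodel. Set $v:=C_r u$, $\tilde a:=\theta_r$ and $\tilde b:=\Sigma_0^{1/2}v$, and take the rank-two self-adjoint perturbation direction $K:=\tilde b\otimes \tilde a+\tilde a\otimes \tilde b$, defining $\Sigma_t:=\Sigma_0^{1/2}(I+tK)\Sigma_0^{1/2}$. Since $v\perp \theta_r$ gives $\langle \tilde a,\tilde b\rangle=0$, a direct computation yields $\|K\|_2^2=2\|\tilde a\|^2\|\tilde b\|^2=2\langle \Sigma_0v,v\rangle=2\sigma_r^2(\Sigma_0;u)/\mu_r$. For $|t|<\|K\|^{-1}$ the operator $I+tK$ is positive, hence $\Sigma_t$ is a valid covariance, and continuity of the maps $\Sigma\mapsto {\bf r}(\Sigma),\,\|\Sigma\|/\bar g_r(\Sigma),\,\sigma_r^2(\Sigma;u)$ at $\Sigma_0$ (using simplicity of $\mu_r$ and boundedness of $C_r$) produces some $\eta_0>0$ with $\Sigma_t\in \mathring{{\mathcal S}}^{(r)}({\frak r},a,\sigma_0,u)$ for all $|t|\leq \eta_0$. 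Since $\|\Sigma_t-\Sigma_0\|_1$ is linear in $|t|$, it can moreover be made $\leq \varepsilon$ by further shrinking $\eta$.

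The two ingredients at $t=0$ follow from standard simple-eigenvalue perturbation theory. Because $\mu_r$ is simple, $\theta_r(\Sigma_t)$ is analytic in $t$ with $\dot \theta_r=C_r\dot \Sigma_0\theta_r$, and $\dot \Sigma_0=\Sigma_0^{1/2}K\Sigma_0^{1/2}$ gives $\dot \Sigma_0\theta_r=\sqrt{\mu_r}\,\Sigma_0^{1/2}K\theta_r=\sqrt{\mu_r}\,\Sigma_0 v$ (the $\tilde a\otimes\tilde b$ term kills $\theta_r$ because $\langle \tilde b,\theta_r\rangle=0$), so
\begin{equation*}
g'(0)=\langle C_r\dot \Sigma_0\theta_r,u\rangle=\sqrt{\mu_r}\,\langle \Sigma_0v,v\rangle=\sigma_r^2(\Sigma_0;u)/\sqrt{\mu_r}.
\end{equation*}
The Gaussian Fisher information at $t=0$ equals $I_n(0)=\tfrac{n}{2}\trace((\Sigma_0^{-1}\dot \Sigma_0)^2)=\tfrac{n}{2}\|K\|_2^2=n\sigma_r^2(\Sigma_0;u)/\mu_r$, so $(g'(0))^2/I_n(0)=\sigma_r^2(\Sigma_0;u)/n$, matching the target exactly.

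Finally, with a smooth compactly supported prior $\pi_\eta$ on $[-\eta,\eta]$ of Fisher information ${\mathcal I}(\pi_\eta)\lesssim \eta^{-2}$ (e.g.\ a rescaled cosine bump), van Trees' inequality gives, for any estimator $T_n$,
\begin{equation*}
\sup_{|t|\leq \eta}{\mathbb E}_{\Sigma_t}(T_n-g(t))^2\ \geq\ \frac{\bigl(\int g'(t)\pi_\eta(t)\,dt\bigr)^2}{\int I_n(t)\pi_\eta(t)\,dt+{\mathcal I}(\pi_\eta)}.
\end{equation*}
Dividing by $\sup_{|t|\leq \eta}\sigma_r^2(\Sigma_t;u)$ (which tends to $\sigma_r^2(\Sigma_0;u)$ as $\eta\to 0$ by continuity), then letting $n\to\infty$ to kill ${\mathcal I}(\pi_\eta)$ relative to $\int I_n\pi_\eta$, and finally letting $\eta\to 0$ to replace averaged by pointwise quantities (using continuity of $g'(t)$ and $I_n(t)/n$ in $t$), one obtains $\liminf_n\inf_{T_n}\sup_{|t|\leq \eta}[n{\mathbb E}_{\Sigma_t}(T_n-g(t))^2/\sigma_r^2(\Sigma_t;u)]\geq 1$, which proves both assertions of the theorem. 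The main technical obstacle is making the Fisher-information formula rigorous when $\Sigma_0$ is not boundedly invertible on ${\mathbb H}$; this is handled by the observation that $K$ has range in ${\rm span}(\tilde a,\tilde b)\subset {\rm Range}(\Sigma_0^{1/2})$ and vanishes on $\ker\Sigma_0$, so the full Gaussian model $N(0,\Sigma_t)$ factorizes into a two-dimensional block carrying all the $t$-dependence (on which the classical formula applies verbatim) and a $t$-independent factor orthogonal to it.
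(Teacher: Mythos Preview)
Your proof is correct and follows the same core strategy as the paper: apply the van Trees inequality along a one-parameter submodel through $\Sigma_0$ in the least-favorable direction $\dot\Sigma_0\propto \theta_r\otimes \Sigma_0 C_r u+\Sigma_0 C_r u\otimes \theta_r$, chosen so that $(g'(0))^2/I_1(0)=\sigma_r^2(\Sigma_0;u)$ exactly. The execution differs in three instructive ways. First, you use the multiplicative parametrization $\Sigma_t=\Sigma_0^{1/2}(I+tK)\Sigma_0^{1/2}$, while the paper uses the additive $\Sigma_t=\Sigma_0+tH/\sqrt{n}$; your choice makes the identity $I_1(0)=\tfrac12\|K\|_2^2$ immediate and is what enables the two-dimensional block argument for possibly non-invertible $\Sigma_0$. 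Second, your limit order (fix $\eta$, let $n\to\infty$, then $\eta\to 0$, relying only on continuity of $t\mapsto g'(t)$ and $t\mapsto I_1(t)$) avoids the explicit perturbation bounds on $g'(t)-g'(0)$ and $I_n(t)-I_n(0)$ that the paper derives in Lemma~\ref{Lemma lower bound help} and thereafter; the price is that you do not obtain any non-asymptotic inequality. Third, the paper handles the invertibility issue by first replacing $\Sigma_0$ with a finite-rank truncation $\Sigma_N\in\mathring{\mathcal S}^{(r)}(\frak r,a,\sigma_0,u)$ and restricting to its (finite-dimensional) image; your sufficiency/factorization route is a valid alternative, but you should state it more precisely: the likelihood ratio $dP_t/dP_0$ depends on $X$ only through $(\mu_r^{-1/2}\langle X,\theta_r\rangle,\ \langle X,C_r u\rangle/\|\Sigma_0^{1/2}C_r u\|)$, which under $P_t$ is a two-dimensional Gaussian with covariance $I_2+tK|_V$, so the classical Fisher-information formula applies on this block.
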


It follows from Corollary \ref{norm_approx_cor} and Theorem \ref{Thm Minimax lower bounds 1} that the estimator $\langle \hat \theta_r^{\delta}, u\rangle$ is efficient 
in a semi-parametric sense for quadratic loss under the assumption that 
${\frak r}_n=o(n^{1/2}).$ It turns out, however, that if 
$\frac{{\frak r}_n}{n^{1/2}}\to \infty,$ 
then not only the efficiency, but even the $\sqrt{n}$--convergence rate of this estimator fails in the class of covariance operators  ${\mathcal S}^{(r)}({\frak r}_n, a, \sigma_0, u).$

\begin{proposition}
\label{no_efficiency}
Let $a>r$ and let $\sigma_0^2$ be sufficiently small, say, 
$$\sigma_0^2\leq \frac{1}{2}\biggl[\frac{a^2}{(r-1)^2}-\frac{a}{r-1}\biggr].$$
Let ${\frak r}_n =o(n)$ and $\frac{{\frak r}_n}{n^{1/2}}\to \infty$
as $n\to \infty.$  
Then, for some constant $c=c(r;a;\sigma_0)>0$ 
$$
\lim_{n\to\infty}\sup_{\Sigma \in {\mathcal S}^{(r)}({\frak r}_n, a, \sigma_0, u)}
{\mathbb P}_{\Sigma}\bigl\{|\langle \hat \theta_r^{\delta},u\rangle -
\langle \theta_r(\Sigma),u\rangle|\geq  c \|u\| \frac{{\frak r}_n}{n}\bigr\}=1.
$$
\end{proposition}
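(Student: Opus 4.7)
The strategy is to reduce the proposition to a direct tension between the uniform normal approximation of Theorem \ref{norm_approx} --- which says that $\langle \hat\theta_r^{\delta}, u\rangle$ concentrates around the shifted target $\sqrt{1+b_r(\Sigma)}\langle \theta_r(\Sigma), u\rangle$ at rate $n^{-1/2}$ --- and the matching lower bound $|b_r(\Sigma)|\asymp {\bf r}(\Sigma)/n$ to be established in Lemma \ref{brbou}. Since $b_r(\Sigma)\in[-1,0]$ and $1-\sqrt{1-x}\geq x/2$ on $[0,1]$, we have
\begin{equation*}
\bigl|\sqrt{1+b_r(\Sigma)}-1\bigr|\cdot|\langle \theta_r(\Sigma), u\rangle| \,\geq\, \tfrac12 |b_r(\Sigma)|\cdot|\langle \theta_r(\Sigma), u\rangle| \,\gtrsim\, \frac{{\bf r}(\Sigma)}{n}\,|\langle \theta_r(\Sigma), u\rangle|.
\end{equation*}
Hence as soon as $|\langle \theta_r(\Sigma),u\rangle|\gtrsim \|u\|$ and ${\bf r}(\Sigma)\asymp {\frak r}_n$, the deterministic gap between the shifted target and the true functional is of order $\|u\|\,{\frak r}_n/n$, which dominates the stochastic fluctuation $\sigma_r(\Sigma;u)/\sqrt n \leq a\|u\|/\sqrt n$ guaranteed by \eqref{bd_sigma_r} precisely when ${\frak r}_n/\sqrt n\to \infty$.

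The whole proof therefore reduces to exhibiting a covariance $\Sigma^{\ast}=\Sigma^{\ast}(n)\in {\mathcal S}^{(r)}({\frak r}_n, a, \sigma_0, u)$ for which $|\langle \theta_r(\Sigma^{\ast}), u\rangle|\gtrsim \|u\|$ and ${\bf r}(\Sigma^{\ast})\asymp {\frak r}_n$. I would do so by fixing an orthonormal system $\theta_1,\dots,\theta_r, w$ for which $u$ decomposes as $u=\alpha\|u\|\theta_r+\beta\|u\| w$ with $\alpha^2+\beta^2=1$ and $\alpha$ bounded away from both $0$ and $1$, prescribing the top $r$ eigenvalues $\mu_1>\cdots>\mu_r$ of $\Sigma^{\ast}$ to form an arithmetic progression with $\|\Sigma^{\ast}\|/\bar g_r(\Sigma^{\ast})=r<a$, and padding the spectrum with $\lfloor {\frak r}_n\rfloor -r$ further eigenvectors orthogonal to $\theta_1,\dots,\theta_r,w,u$ carrying equal small eigenvalues chosen so that ${\bf r}(\Sigma^{\ast})$ lies in $[c_0 {\frak r}_n,{\frak r}_n]$ while leaving $\bar g_r(\Sigma^{\ast})$ unchanged. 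The explicit quantitative upper bound on $\sigma_0^2$ in the hypothesis is exactly what guarantees that an admissible $\alpha$ with $\alpha^2$ bounded away from both endpoints of $[0,1]$ can be chosen so that the worst-case lower bound on $\sigma_r^2(\Sigma^{\ast};u)=\mu_r\langle \Sigma^{\ast}C_r u, C_r u\rangle$ (expressed in terms of $a$, $r-1$ through the extremal admissible ratios of $\mu_1,\mu_r$ and the gap) still exceeds $\sigma_0^2$.

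With $\Sigma^{\ast}$ in hand the conclusion is a short application of Theorem \ref{norm_approx}: with probability tending to one under ${\mathbb P}_{\Sigma^{\ast}}$,
\begin{equation*}
\bigl|\langle \hat\theta_r^{\delta}, u\rangle - \sqrt{1+b_r(\Sigma^{\ast})}\,\langle \theta_r(\Sigma^{\ast}), u\rangle\bigr| \,\leq\, K\,\sigma_r(\Sigma^{\ast};u)/\sqrt n \,\leq\, Ka\|u\|/\sqrt n
\end{equation*}
for a large constant $K$, and combining with the deterministic bias lower bound via the triangle inequality gives, on the same event, $|\langle \hat\theta_r^{\delta}, u\rangle - \langle \theta_r(\Sigma^{\ast}), u\rangle|\geq c\|u\|\,{\frak r}_n/n - Ka\|u\|/\sqrt n$, which is at least $(c/2)\|u\|\,{\frak r}_n/n$ for $n$ large since ${\frak r}_n/\sqrt n\to \infty$. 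Bounding the supremum over the class by its value at $\Sigma^{\ast}$ completes the argument.

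The main obstacle, as is typical for such no-go results, is the construction step: one has to verify four simultaneous constraints on $\Sigma^{\ast}$ --- bounded effective rank, spectral-gap ratio at most $a$, variance lower bound $\sigma_0^2$, and large alignment of $\theta_r(\Sigma^{\ast})$ with $u$ --- under quantitative conditions linking $a$, $r$ and $\sigma_0$. This is exactly where the specific numerical hypothesis on $\sigma_0^2$ enters; the rest of the argument is a straightforward combination of Theorem \ref{norm_approx}, Lemma \ref{brbou} (the two-sided bound $|b_r|\asymp {\bf r}(\Sigma)/n$), and the elementary inequality $1-\sqrt{1-x}\geq x/2$.
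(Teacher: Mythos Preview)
Your proposal is correct and follows essentially the same route as the paper: lower-bound the bias $|b_r|$ via Lemma \ref{brbou} and bounds \eqref{bd_Ar_up}--\eqref{bd_Ar_down}, upper-bound the stochastic fluctuation of $\langle\hat\theta_r^{\delta},u\rangle$ around $\sqrt{1+b_r}\langle\theta_r,u\rangle$, and combine via the triangle inequality on a well-chosen $\Sigma^{\ast}$.

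Two minor differences are worth noting. First, the paper controls the fluctuation term not through the normal approximation of Theorem \ref{norm_approx} but via the sharper exponential bound \eqref{on_hat_theta}, applied with $t_n\to\infty$, $t_n=o({\frak r}_n^2/n)$; this is slightly more direct and avoids routing through a CLT when only a concentration statement is needed. Second, your construction of $\Sigma^{\ast}$ is correct in spirit but vaguer than the paper's: the paper takes the top $r+1$ eigenvalues as $\mu_s=\mu_1(1-(s-1)/a)$ (so $\|\Sigma^{\ast}\|/\bar g_r=a$, not $r$) and sets $u=\tfrac{1}{\sqrt2}\theta_1+\tfrac{1}{\sqrt2}\theta_r$, which makes $\sigma_r^2(\Sigma^{\ast};u)$ compute \emph{exactly} to $\tfrac12\bigl[\tfrac{a^2}{(r-1)^2}-\tfrac{a}{r-1}\bigr]$ --- this is where the specific numerical hypothesis on $\sigma_0^2$ is saturated. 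Your choice of gap ratio $r$ and of placing $u$ in ${\rm span}\{\theta_r,w\}$ would yield a different variance expression and would not match that threshold on the nose; you would need to redo the arithmetic to see which $\sigma_0^2$ your construction actually tolerates.
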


The reason for the loss of the $\sqrt{n}$--convergence rate of plug-in estimators of linear functionals  
of principal components is their large bias in the case when the complexity of the problem is even moderately high (that is, $\frac{{\frak r}_n}{n^{1/2}}\to \infty$). 
 In \cite{KoltchinskiiLouniciPCAAHP}, a method of bias reduction in this problem 
was suggested that led to $\sqrt{n}$-consistent estimation of linear functionals. The estimator is, however, not efficient, since the basic sample split employed in its construction gives a limiting variance that is twice as large as the optimal one. Since the bias parameter depend itself on sample size in a subtle way, modifying the algorithm in \cite{KoltchinskiiLouniciPCAAHP} to obtain an efficient estimator is not straightforward, and we describe below a construction that yields an asymptotically normal estimator of $\langle \theta_r(\Sigma),u\rangle$ with optimal variance in the class 
of covariance operators ${\mathcal S}^{(r)}({\frak r}_n, a, \sigma_0, u)$ with ${\frak r}_n=o(n).$
The idea is to use only a small portion of the data (of size $o(n)$) to estimate the 
bias parameters and to use most of the data for the estimator of the target eigenvector. \\
For some $m<n/3,$ 
we split the sample $X_1,\dots, X_n$ into three disjoint subsamples, one of size
$n':=n-2m>n/3$ and two others of size $m$ each. 
In Theorem \ref{efficient_tilde} below, we choose $m=m_n=o(n)$ as $n\to\infty,$
which implies $n'=n'_n=(1+o(1))n$ as 
$n\to \infty.$ Denote by $\hat \Sigma^{(1)}, \hat \Sigma^{(2)}, \hat \Sigma^{(3)}$
the sample covariances based on these three subsamples and let 
$
\hat \theta_r^{\delta_j, j}, j=1,2,3
$
be the corresponding empirical eigenvectors 
with parameters $\delta_j=\tau \|\hat \Sigma^{(j)}\|$ for a proper choice 
of $\tau$ (see Lemma \ref{hatPdelta}). 
Let
$$
\check d_r := 
\frac{\langle \hat \theta_r^{\delta_1,1}, \hat \theta_r^{\delta_2,2}\rangle}{\langle \hat \theta_r^{\delta_2,2}, \hat \theta_r^{\delta_3,3}\rangle^{1/2}}~~~
\text{and}~~~ 
\check \theta_r := \frac{\hat \theta_r^{\delta_1,1}}{\check d_r\vee (1/2)}.
$$ 
Our main goal is to prove the following result showing the efficiency of the estimator 
$\langle \check \theta_r,u\rangle$ of the linear functional $\langle \theta_r(\Sigma),u\rangle.$ Its proof will be given in Section \ref{Sec:efficient_tilde}.

\begin{thm}
\label{efficient_tilde}
Let $u\in {\mathbb H},$ $a>1$ and $\sigma_0>0.$ Suppose that ${\frak r}_n >1$ and ${\frak r}_n =o(n)$ as $n\to\infty.$
Take $m=m_n$ such that $m_n=o(n)$ and $n {\frak r}_n =o(m_n^2)$ as $n\to\infty.$
Then 
\begin{align}
\label{norm_approx_main_tilde}
&
\sup_{\Sigma \in {\mathcal S}^{(r)}({\frak r}_n, a, \sigma_0, u)}\sup_{x\in {\mathbb R}}\bigl|{\mathbb P}_{\Sigma}\bigl\{\frac{\sqrt{n}(\langle \check \theta_r,u\rangle- \langle\theta_r(\Sigma),u\rangle)}{\sigma_r(\Sigma;u)}\leq x\bigr\}-\Phi(x)\bigr| \to 0\ {\rm as}\ n\to\infty.
\end{align}
Moreover, under Assumption \ref{assump_loss} on the loss $\ell,$
\begin{equation*}
\nonumber
\sup_{\Sigma \in {\mathcal S}^{(r)}({\frak r}_n, a, \sigma_0, u)}\bigl|{\mathbb E}_{\Sigma}\ell\biggl(\frac{\sqrt{n}(\langle \check \theta_r,u\rangle- \langle\theta_r(\Sigma),u\rangle)}{\sigma_r(\Sigma;u)}\biggr)-{\mathbb E}\ell(Z)\bigr| \to 0\ {\rm as}\ n\to\infty.
\end{equation*}
\end{thm}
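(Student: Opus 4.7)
The plan is to show that $\check d_r = \alpha_1 + o_p(n^{-1/2})$ uniformly over the class, where $\alpha_1 := \langle\hat\theta_r^{(1)},\theta_r\rangle$; then $\check\theta_r = \hat\theta_r^{(1)}/\check d_r$ will automatically absorb the multiplicative bias factor $\sqrt{1+b_r}$ that prevents the plug-in estimator $\langle\hat\theta_r^{\delta,1},u\rangle$ from being efficient. First, Lemma~\ref{hatPdelta} applied to each of the three subsamples lets us replace $\hat\theta_r^{\delta_j,j}$ by the properly aligned unit eigenvector $\hat\theta_r^{(j)}$ of $\hat\Sigma^{(j)}$ up to an event of probability $\leq 3e^{-c\tau^2 m_n}$, which is negligible. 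Decomposing $\hat\theta_r^{(j)} = \alpha_j\theta_r + \xi_j$ with $\xi_j := (I-P_r)\hat\theta_r^{(j)}$, one obtains
\[
a_{12} := \langle\hat\theta_r^{(1)},\hat\theta_r^{(2)}\rangle = \alpha_1\alpha_2 + \langle\xi_1,\xi_2\rangle, \qquad a_{23} := \langle\hat\theta_r^{(2)},\hat\theta_r^{(3)}\rangle = \alpha_2\alpha_3 + \langle\xi_2,\xi_3\rangle.
\]

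Since $\|\xi_j\|^2 = O_p({\bf r}_n/n_j)$, the $\alpha_j$'s tend to $1$, and a Taylor expansion of the ratio around the deterministic value $1$ gives
\[
\check d_r - \alpha_1 = \tfrac12\alpha_1(\alpha_2-\alpha_3) + \langle\xi_1,\xi_2\rangle - \tfrac12\alpha_1\langle\xi_2,\xi_3\rangle + \text{(quadratic remainder)}.
\]
The critical step is bounding the two cross-sample inner products. Using the perturbation linearization $\xi_k \approx -C_r(\hat\Sigma^{(k)}-\Sigma)\theta_r$ from \cite{KoltchinskiiLouniciPCAAHP} together with the independence of the subsamples, a second-moment computation yields $\mathbb{E}|\langle\xi_i,\xi_j\rangle|^2 = \mathrm{tr}(\Gamma_i\Gamma_j)$, where $\Gamma_k := \mathbb{E}[\xi_k\otimes\xi_k]$ satisfies $\|\Gamma_k\|\lesssim 1/n_k$ and $\mathrm{tr}(\Gamma_k)\lesssim {\bf r}_n/n_k$. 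This gives $|\langle\xi_1,\xi_2\rangle| = O_p(\sqrt{{\bf r}_n/(nm)})$ and $|\langle\xi_2,\xi_3\rangle| = O_p(\sqrt{{\bf r}_n}/m)$; a chi-squared type concentration for the i.i.d.\ quantities $\|\xi_j\|^2$ analogously yields $\alpha_2-\alpha_3 = O_p(\sqrt{{\bf r}_n}/m)$. Each of these three bounds is $o(n^{-1/2})$ precisely under the hypothesis $n{\bf r}_n = o(m^2)$, and the quadratic remainder is of smaller order; in particular $\check d_r \to 1$ in probability, so the safeguard $\check d_r\vee 1/2$ is active only on a vanishing event.

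Granting $\check d_r = \alpha_1 + o_p(n^{-1/2})$, write
\[
\langle\check\theta_r,u\rangle - \langle\theta_r,u\rangle = \check d_r^{-1}\bigl[\langle\xi_1,u\rangle + (\alpha_1-\check d_r)\langle\theta_r,u\rangle\bigr].
\]
The second numerator term is $o_p(\sigma_r(\Sigma;u)/\sqrt n)$ using $\sigma_r\geq\sigma_0$ and $\|u\|$ fixed. The first term $\langle\xi_1,u\rangle = \langle\hat\theta_r^{(1)},u\rangle - \alpha_1\langle\theta_r,u\rangle$ differs from the centered statistic in Theorem~\ref{norm_approx} by $(\alpha_1-\sqrt{1+b_r(\Sigma)})\langle\theta_r,u\rangle$; but $\alpha_1^2 - (1+b_r) = \langle\hat P_r^{(1)}-\mathbb{E}\hat P_r^{(1)},P_r\rangle = O_p(\sqrt{{\bf r}_n}/n)$ by concentration of the empirical spectral projection around its mean, which is $o_p(n^{-1/2})$ since ${\bf r}_n = o(n)$. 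Applying Theorem~\ref{norm_approx} to the size-$n' = (1+o(1))n$ subsample then delivers the claimed normal approximation \eqref{norm_approx_main_tilde}. The loss-function statement follows by upgrading every in-probability bound above to an exponential tail bound via \eqref{exp_bd_Sigma} and its consequences for $\hat P_r$, so that the normalized error is uniformly sub-Gaussian and hence uniformly integrable against any $\ell$ satisfying Assumption~\ref{assump_loss}. The principal technical obstacle is the tight control of $\langle\xi_i,\xi_j\rangle$: the trivial Cauchy--Schwarz bound $\|\xi_i\|\|\xi_j\| = O_p({\bf r}_n/\sqrt{n_in_j})$ loses a factor of $\sqrt{{\bf r}_n}$ and would only close the argument under the stronger condition $n{\bf r}_n^2 = o(m^2)$; genuinely exploiting sample independence via the $\mathrm{tr}(\Gamma_i\Gamma_j)$ identity (equivalently, the explicit linearization of $\xi_k$) is what converts the sample-splitting scheme from $\sqrt n$-consistent to semi-parametrically efficient.
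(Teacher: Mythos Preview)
Your approach is correct and the main bounds close under the stated hypothesis $n{\frak r}_n=o(m_n^2)$, but it is organized differently from the paper's proof and one step needs more care than you give it.

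\emph{Difference from the paper.} The paper does not center $\hat d_r$ around the random quantity $\alpha_1=\langle\hat\theta_r^{(1)},\theta_r\rangle$; instead it shows $\hat d_r\approx \sqrt{1+b_r^{(n')}}$, the \emph{deterministic} bias factor associated with the first subsample, by controlling the inner products $\langle\hat\theta_r^{(i)},\hat\theta_r^{(j)}\rangle$ around $\sqrt{1+b_r^{(n_i)}}\sqrt{1+b_r^{(n_j)}}$. The cross terms are handled not by a second-moment trace identity but by conditional application of the concentration bound $|\langle\hat\theta_r-\sqrt{1+b_r}\theta_r,v\rangle|\lesssim (\|\Sigma\|/g_r)\sqrt{t/n}\,\|v\|$ with $v$ equal to the other subsample's centered eigenvector; this yields exponential tail bounds directly and feeds straight into the loss-function statement. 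Your route (random centering at $\alpha_1$, trace identity $\mathbb{E}|\langle\xi_i,\xi_j\rangle|^2=\mathrm{tr}(\Gamma_i\Gamma_j)$) is more symmetric and makes the role of independence very transparent, but it only gives second-moment control, so the upgrade to exponential tails for the risk claim still has to go through the paper's conditional-concentration machinery.

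\emph{The step that needs more work.} Your key inequality $\|\Gamma_k\|\lesssim 1/n_k$ with $\Gamma_k=\mathbb{E}[\xi_k\otimes\xi_k]=(I-P_r)\mathbb{E}\hat P_r^{(k)}(I-P_r)$ is true, but it does \emph{not} follow from the first-order linearization alone, nor from the paper's stated bound on $T_r$ (that bound only gives $\|\Gamma_k\|\lesssim\sqrt{{\frak r}_n}/n_k$, which would force the stronger hypothesis $n{\frak r}_n^{3/2}=o(m_n^2)$). To get $\|\Gamma_k\|\lesssim 1/n_k$ you must expand $S_r(E)$ to second order: $(I-P_r)S_r(E)(I-P_r)=C_rEP_rEC_r+(I-P_r)S_r^{(3)}(E)(I-P_r)$, whence
\[
\Gamma_k=\mathbb{E}\bigl[(C_rE_k\theta_r)\otimes(C_rE_k\theta_r)\bigr]+O\bigl(({\frak r}_n/n_k)^{3/2}\bigr)=\frac{\mu_r}{n_k}C_r\Sigma C_r+o(1/n_k),
\]
and the leading term indeed has operator norm $\lesssim \|\Sigma\|^2/(g_r^2 n_k)$. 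This second-order computation is implicit in the paper's proof of the bias representation (Lemma~\ref{brbou}) but is not the ``linearization'' you cite; you should state it explicitly. Once this is in place, all your displayed rates are correct: $|\langle\xi_2,\xi_3\rangle|=O_p(\sqrt{{\frak r}_n}/m)$ is exactly $o_p(n^{-1/2})$ under $n{\frak r}_n=o(m^2)$, and the remaining terms are of smaller order.
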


\begin{remark} \normalfont 
The assumption ${\frak r}_n=o(n)$ is not necessary for the existence of a $\sqrt{n}$-consistent estimator of 
$\langle \theta_r(\Sigma),u\rangle.$ In fact, the estimator $\langle \check \theta_r,u\rangle$ (say, with $m=n/4$) 
is $\sqrt{n}$-consistent provided that ${\frak r}_n\leq cn$ for a sufficiently small constant $c>0.$ This fact easily 
follows from \eqref{tilde_theta_exponent} of Corollary \ref{tilde_theta:exponent} 
in Section \ref{Sec:efficient_tilde}. This is also the case for a somewhat simpler estimator
(based on splitting the sample into two parts) considered earlier 
by Koltchinskii and Lounici \cite{KoltchinskiiLouniciPCAAHP} (see Proposition 3). 
However, it is not clear whether asymptotically efficient estimators (in the sense of Theorem \ref{efficient_tilde}) of linear functionals $\langle \theta_r(\Sigma),u\rangle$ of the eigenvector $\theta_r(\Sigma)$ with $\sqrt{n}$-rate and  optimal limit variance $\sigma_r(\Sigma;u)$ exist when the condition 
${\frak r}_n=o(n)$ does not hold. In this case, the linear term of the perturbation series,  
that determines the limit variance $\sigma_r(\Sigma;u),$ is no longer dominant,
which makes the existence of such estimators unlikely.
However, asymptotically normal estimators 
of functionals $\langle \theta_r(\Sigma),u\rangle$ might still exist (but with a larger limit variance). It could be easier to develop such estimators
in the case of spiked covariance models rather than in the more general framework of the current paper. 
The solution of this problem would rely on the tools of random matrix theory (see, \cite{PaulStatisticaSinica}
as well as the more recent paper \cite{PCA1}) rather than perturbation theory, and, possibly, it would require the development of minimax lower bound techniques  different from those employed in the present paper.  
\end{remark}

\begin{remark} \normalfont 
It is not hard to develop similar asymptotically efficient estimators for $l$-dimensional ``functionals" of the form 
$A\theta_r(\Sigma),$ where 
$A$ is a linear operator from ${\mathbb H}$ into ${\mathbb R}^{l}$ for a fixed (small) dimension $l.$
This is equivalent to the problem of estimation of $ (\langle \theta_r(\Sigma),u_1\rangle, \dots, \langle \theta_r(\Sigma),u_l\rangle )$ for several linear functionals $u_1,\dots, u_l\in {\mathbb H}.$
The bias reduction method developed in this paper can be extended to this case 
and the proof of asymptotic normality of the resulting estimators follows along the same lines as 
in the case when $l=1$ with asymptotic covariance matrix equal to
\begin{equation*}
\left (\mu_r \langle \Sigma C_r u_i, C_r u_j \rangle  \right )_{i,j=1, \dots, p}.
\end{equation*} Similarly, our approach can be extended to linear functionals of multiple eigenvectors of multiplicity $1$ each, e.g. $(\langle \theta_r(\Sigma), u \rangle, \langle \theta_s(\Sigma), v \rangle)$, $u, v \in \mathbb{H}$. In this case the asymptotic covariance equals
\begin{equation*}
-\frac{\mu_r \mu_s}{(\mu_r-\mu_s)^2} \langle \theta_r(\Sigma), v \rangle \langle \theta_s(\Sigma), u \rangle. 
\end{equation*}
In this case the de-biasing strategy in Theorem 3.3 can be adjusted by using the second and third part oft the sample to estimate the bias for both $\theta_r(\Sigma)$ and $\theta_s(\Sigma)$. \\
However, note that when ${\bf r}(\Sigma)$ is large, the asymptotic normality of random vectors $n^{1/2}(\check \theta_r-\theta_r(\Sigma))$ holds only in the sense of finite-dimensional distributions, not in the sense of weak convergence in the Hilbert space ${\mathbb H}$ (indeed, the norm $\|\check \theta_r-\theta_r(\Sigma)\|$ is of order $\sqrt{{{\bf r}(\Sigma)/n}} \gg 1/\sqrt{n}$). 
\end{remark}

\begin{remark} \normalfont 
Our method of bias reduction does not seem 
to have an easy extension to the problem of estimation of linear functionals of spectral projections 
$P_r$ for an eigenvalue of multiplicity $>1$. In part, this was a motivation for the first author to develop
a more general approach to bias reduction (a so called ``bootstrap chain" method) and to study the problem 
of efficient estimation for more general smooth functionals of covariance of the form $\langle f(\Sigma),B\rangle,$
where $f$ is a smooth function on the real line (see \cite{Koltchinskii_2017}).
So far, the asymptotic efficiency for the resulting ``bootstrap chain" estimators has been 
proved under more restrictive assumptions on the underlying covariance $\Sigma.$ 
In particular, it was assumed that ${\mathbb H}$ is a space of finite (high) dimension $p$ 
and that the spectrum of $\Sigma$ is both upper and lower bounded away from $0$ by constants which implies that ${\bf r}(\Sigma)\asymp p.$
\end{remark}

\begin{remark} 
Lemma \ref{th:normal_tilde} of Section \ref{Sec:efficient_tilde} provides explicit bounds 
on the accuracy of the normal approximation in Theorem \ref{efficient_tilde}. Using these bounds,
it is possible to state somewhat more complicated conditions under which the normal approximation 
holds if $a=a_n\to \infty$ or $\sigma_0=\sigma_0^{(n)}\to 0$ as $n\to\infty.$
In particular, the normal approximation \eqref{norm_approx_main_tilde} still holds uniformly in ${\mathcal S}^{(r)}({\frak r}_n, a_n, \sigma_0^{(n)}, u)$ provided that $m_n=o(n)$ and 
$$
\frac{a_n^2}{\sigma_0^{(n)}}\biggl(\sqrt{\frac{n {\frak r}_n}{m_n^2} \log \frac{m_n^2}{n{\frak r}_n}}
\bigvee \sqrt{\frac{n\log^2 \frac{m_n^2}{n {\frak r}_n}}{m_n^2}}\biggr)\to 0\ {\rm as}\ n\to\infty.
$$
\end{remark}

Finally, we show that $\sigma_r(\Sigma;u)$ can be consistently estimated by 
$\sigma_r(\hat \Sigma;u),$ which allows us to replace the standard 
deviation $\sigma_r(\Sigma;u)$ in the normal approximation (\ref{norm_approx_main_tilde}) by its empirical version. This yields 
the following result that can be used for
hypotheses testing of linear functionals of $\theta_r$. 
See Section \ref{Sec:efficient_tilde_empirical} for its proof.

\begin{Corollary}
\label{efficient_tilde_empirical}
Under the conditions of Theorem \ref{efficient_tilde},
\begin{eqnarray}
\label{norm_approx_main_tilde_empirical}
&
\nonumber
\sup_{\Sigma \in {\mathcal S}^{(r)}({\frak r}_n, a, \sigma_0, u)}\sup_{x\in {\mathbb R}}\bigl|{\mathbb P}_{\Sigma}\bigl\{\frac{\sqrt{n}(\langle \check \theta_r,u\rangle- \langle\theta_r(\Sigma),u\rangle)}{\sigma_r(\hat \Sigma;u)}\leq x\bigr\}-\Phi(x)\bigr| \to 0\ {\rm as}\ n\to\infty.
\end{eqnarray}
\end{Corollary}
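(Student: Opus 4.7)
The plan is to reduce the statement to Theorem \ref{efficient_tilde} via a uniform Slutsky-type argument, after showing that the empirical quantity $\sigma_r(\hat\Sigma;u)$ is consistent for $\sigma_r(\Sigma;u)$ uniformly over the class $\mathcal{S}^{(r)}({\frak r}_n, a, \sigma_0, u).$ Writing
\[
\frac{\sqrt{n}(\langle \check\theta_r, u\rangle - \langle \theta_r(\Sigma), u\rangle)}{\sigma_r(\hat\Sigma;u)}
= \frac{\sigma_r(\Sigma;u)}{\sigma_r(\hat\Sigma;u)}\cdot \frac{\sqrt{n}(\langle \check\theta_r, u\rangle - \langle \theta_r(\Sigma), u\rangle)}{\sigma_r(\Sigma;u)},
\]
the second factor is uniformly asymptotically standard normal by Theorem \ref{efficient_tilde}, so it suffices to establish that $\sigma_r(\Sigma;u)/\sigma_r(\hat\Sigma;u) \to 1$ in probability uniformly on the class. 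A short direct argument on cumulative distribution functions then upgrades this to the required uniform Kolmogorov-type convergence.

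To prove uniform ratio consistency of the variance, I would expand
\[
\sigma_r^2(\Sigma;u) = \mu_r\langle \Sigma C_r u, C_r u\rangle
\]
and control each ingredient separately. First, Weyl's inequality \eqref{H. Weyl} combined with the operator norm bound \eqref{exp_bd_Sigma} yields $|\hat\mu_r - \mu_r| \lesssim \|\Sigma\|(\sqrt{{\frak r}_n/n}\vee \sqrt{t/n})$ with probability $1 - e^{-t}$, so $\hat\mu_r/\mu_r \to 1$ uniformly (using $\mu_r \geq \bar g_r \geq \|\Sigma\|/a$). Second, the perturbation of $C_r$ is controlled by writing $C_r$ through a Riesz contour integral,
\[
C_r = -\frac{1}{2\pi i}\oint_{\gamma_r} \frac{1}{z-\mu_r}(zI - \Sigma)^{-1} dz + \text{(projection correction)},
\]
taken over a circle $\gamma_r$ of radius $\bar g_r/4$ around $\mu_r$ (and using the analogous representation for $\hat C_r$ with $\hat\mu_r$ in place of $\mu_r$, which is available with overwhelming probability thanks to Lemma \ref{delta_clusters} and Lemma \ref{hatPdelta}). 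On such a contour the resolvent difference satisfies $\|(zI - \hat\Sigma)^{-1} - (zI - \Sigma)^{-1}\| \lesssim \|\hat\Sigma - \Sigma\|/\bar g_r^2$, and integrating yields
\[
\|\hat C_r - C_r\| \lesssim \frac{\|\hat\Sigma - \Sigma\|}{\bar g_r^2}
\lesssim \frac{a^2}{\|\Sigma\|}\sqrt{\frac{{\frak r}_n}{n}}.
\]
Since $\|C_r\| = 1/g_r \geq 1/(a\|\Sigma\|)$ up to a constant, the relative error $\|\hat C_r - C_r\|/\|C_r\|$ is $O(a^3\sqrt{{\frak r}_n/n}) \to 0$ uniformly.

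Combining the three estimates via a telescoping decomposition,
\[
\sigma_r^2(\hat\Sigma;u) - \sigma_r^2(\Sigma;u)
= (\hat\mu_r - \mu_r)\langle \hat\Sigma \hat C_r u, \hat C_r u\rangle
+ \mu_r\bigl[\langle (\hat\Sigma - \Sigma)\hat C_r u, \hat C_r u\rangle
+ \langle \Sigma(\hat C_r - C_r)u, \hat C_r u\rangle + \langle \Sigma C_r u, (\hat C_r - C_r)u\rangle\bigr],
\]
and dividing by $\sigma_r^2(\Sigma;u) \geq \sigma_0^2 > 0$, each term is $o_{\mathbb{P}}(1)$ uniformly. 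This yields $\sigma_r(\hat\Sigma;u)/\sigma_r(\Sigma;u) \to 1$ in probability uniformly on $\mathcal{S}^{(r)}({\frak r}_n, a, \sigma_0, u)$.

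The main obstacle is the perturbation bound for $\hat C_r - C_r$: even though the result is standard in flavor, one must be careful to use the localization provided by Lemma \ref{hatPdelta} so that the contour $\gamma_r$ encloses exactly the cluster of eigenvalues of $\hat\Sigma$ converging to $\mu_r$, and to track the dependence on $\bar g_r$ (not just $g_r$) so that the bounds are uniform in $\Sigma$. Once this is in place, the Slutsky passage to $\sigma_r(\hat\Sigma;u)$ in the denominator is straightforward.
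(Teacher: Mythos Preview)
Your overall strategy coincides with the paper's: prove uniform consistency of $\sigma_r(\hat\Sigma;u)/\sigma_r(\Sigma;u)$ by a perturbation argument, then combine with Theorem~\ref{efficient_tilde} via a Slutsky/Kolmogorov-distance step (the paper invokes Lemma~\ref{lemma_xi_eta} for the latter). The difference lies in how the variance perturbation is carried out. The paper writes $\sigma_r^2(\Sigma;u)=\tfrac12\trace(\Sigma D(\Sigma)\Sigma D(\Sigma))$ with $D(\Sigma)=\theta_r\otimes C_r u+C_r u\otimes\theta_r$, and bounds $\|D(\tilde\Sigma)-D(\Sigma)\|$ through the contour identity
\[
P_r\otimes C_r+C_r\otimes P_r=\frac{1}{2\pi i}\oint_{\gamma_r} R_\Sigma(\eta)\otimes R_\Sigma(\eta)\,d\eta,
\]
which has the virtue that the integrand does not involve $\mu_r$ explicitly; only the contour does. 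Your telescoping of $\mu_r\langle\Sigma C_r u,C_r u\rangle$ is equally valid and arguably more transparent, but your contour formula for $C_r$ alone necessarily carries a factor $1/(\mu_r-\eta)$ in the integrand. Consequently, the bound $\|\hat C_r-C_r\|\lesssim\|\hat\Sigma-\Sigma\|/\bar g_r^2$ requires, in addition to the resolvent difference you wrote, a term coming from $|1/(\hat\mu_r-\eta)-1/(\mu_r-\eta)|\lesssim|\hat\mu_r-\mu_r|/\bar g_r^2$ on $\gamma_r$; this contributes at the same order and does not change the conclusion, but should be made explicit. (Your aside about $\|\hat C_r-C_r\|/\|C_r\|$ is not actually needed; dividing by $\sigma_r^2(\Sigma;u)\geq\sigma_0^2$, as you do at the end, is the correct normalization.) With that small addition, your argument goes through and yields the same bound as the paper's Lemma~\ref{Lemma pertubation sigma_u - sigma_u(Sigma+E)}.
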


\section{Proof of Theorem \ref{norm_approx}}
\label{sec:norm_approx}

We will prove the result for empirical eigenvectors $\hat \theta_r$
rather than for $\hat \theta_r^{\delta}.$ The reduction to this case is based on Lemma 
\ref{hatPdelta} which immediately implies that 
$$
\sup_{\Sigma \in {\mathcal S}^{(r)}({\frak r}_n, a, \sigma_0, u)}
{\mathbb P}_{\Sigma}\{\hat \theta_r^{\delta}\neq \hat \theta_r\}
\leq e^{-\beta \tau^2 n}.
$$
Therefore, denoting 
$$
\xi_n(\Sigma):= \frac{\sqrt{n}(\langle \hat \theta_r^{\delta},u\rangle- \sqrt{1+b_r(\Sigma)}\langle \theta_r(\Sigma),u\rangle)}{\sigma_r(\Sigma;u)}
$$
and 
$$
\eta_n(\Sigma):= \frac{\sqrt{n}(\langle \hat \theta_r^,u\rangle- \sqrt{1+b_r(\Sigma)}\langle \theta_r(\Sigma),u\rangle)}{\sigma_r(\Sigma;u)},
$$
we obtain
$$
\sup_{\Sigma \in {\mathcal S}^{(r)}({\frak r}_n, a, \sigma_0, u)}\sup_{x\in {\mathbb R}}|
{\mathbb P}_{\Sigma}\{\xi_n(\Sigma)\leq x\}-
{\mathbb P}_{\Sigma}\{\eta_n(\Sigma)\leq x\}| \leq e^{-\beta \tau^2 n} \to 0\ {\rm as}\ n\to\infty.
$$
Also, since 
$
\xi_n(\Sigma)
\leq \frac{2\sqrt{n}\|u\|}{\sigma_r(\Sigma;u)}
$
and 
$
\eta_n(\Sigma)
\leq \frac{2\sqrt{n}\|u\|}{\sigma_r(\Sigma;u)},
$
we obtain that 
$$
\sup_{\Sigma \in {\mathcal S}^{(r)}({\frak r}_n, a, \sigma_0, u)}|
{\mathbb E}_{\Sigma}\ell(\xi_n(\Sigma))-{\mathbb E}_{\Sigma}\ell(\eta_n(\Sigma))| 
$$
$$
\leq \sup_{\Sigma \in {\mathcal S}^{(r)}({\frak r}_n, a, \sigma_0, u)}
{\mathbb E}_{\Sigma}|\ell(\xi_n(\Sigma))
- 
\ell(\eta_n(\Sigma))| I(\hat \theta_r^{\delta}\neq \hat \theta_r)
\leq 2\ell\biggl(\frac{2\sqrt{n}\|u\|}{\sigma_0}\biggr)e^{-\beta \tau^2 n} \to 0,
$$
under Assumption \ref{assump_loss}.

We will prove more explicit bounds for the estimator $\hat \theta_r$ stated below in Lemma \ref{th:normal} that immediately implies the result. 

Our starting point is the first order perturbation expansion of the empirical spectral projection operator $\hat P_r$:
\begin{equation}
\label{first_perturb}
\hat P_r = P_r + L_r(E) + S_r(E)
\end{equation}
with a linear term $L_r(E)=P_r E C_r + C_r E P_r$ and a remainder $S_r(E),$
where $E:=\hat \Sigma-\Sigma.$
It was proved in \cite{KoltchinskiiLouniciPCAAHP} that, under the assumption 
\begin{equation}
\label{cond_gamma}
{\mathbb E}\|\hat \Sigma-\Sigma\|\leq \frac{(1-\gamma)g_r}{2}
\end{equation}
for some $\gamma \in (0,1),$ the bilinear form of the remainder $S_r(E)$ satisfies the following 
concentration inequality: for all $u,v\in {\mathbb H}$ and for all $t\geq 1$ with probability at least $1-e^{-t}$
\begin{equation}
\label{conc_S_r}
\Bigl|\langle (S_r(E)-{\mathbb E}S_r(E))u,v\rangle\Bigr| \lesssim_{\gamma} 
\frac{\|\Sigma\|^2}{g_r^2}\biggl(\sqrt{\frac{{\bf r}(\Sigma)}{n}}\bigvee \sqrt{\frac{t}{n}}\bigvee \frac{t}{n}\biggr)\sqrt{\frac{t}{n}} \|u\|\|v\|.
\end{equation}
Under the same assumption, it was also proved in \cite{KoltchinskiiLouniciPCAAHP} that the following representation holds for the bias ${\mathbb E}\hat P_r-P_r$
of empirical spectral projections $\hat P_r:$
\begin{equation}
\label{bias_P_r}
{\mathbb E}\hat P_r-P_r = P_r({\mathbb E}\hat P_r-P_r)P_r+T_r,
\end{equation}
where the main term $P_r({\mathbb E}\hat P_r-P_r)P_r$ is aligned with the spectral projection $P_r$ and is of order
\begin{equation}
\label{bias_main}
\|P_r({\mathbb E}\hat P_r-P_r)P_r\|\lesssim \frac{\|\Sigma\|^2}{g_r^2}\frac{{\bf r}(\Sigma)}{n}
\end{equation}
and the remainder $T_r$ satisfies the bound 
\begin{equation}
\label{bd_T_r}
\|T_r\|\lesssim_{\gamma}  \frac{m_r\|\Sigma\|^2}{g_r^2} \sqrt{\frac{{\bf r}(\Sigma)}{n}}\frac{1}{\sqrt{n}}.
\end{equation}
Representation (\ref{bias_P_r}) is especially simple in the case when $P_r$ is of rank $1$ ($m_r=1$), which also implies that 
$\hat P_r$ is of rank $1.$ In this case,
$P_r= \theta_r \otimes \theta_r,$ $\hat P_r = \hat \theta_r\otimes \hat \theta_r$ for unit eigenvectors $\theta_r, \hat \theta_r$ 
of covariance operators $\Sigma, \hat \Sigma,$ respectively, and 
$$
P_r({\mathbb E}\hat P_r-P_r)P_r= b_r P_r
$$
for a ``bias parameter" $b_r = b_r(\Sigma):$
$$
b_r = {\mathbb E}\langle \hat \theta_r, \theta_r\rangle^2 -1 \in [-1,0].
$$
Thus, it follows from (\ref{bias_P_r}) that 
\begin{equation}
\label{EhatP_r}
{\mathbb E} \hat P_r = (1+b_r)P_r + T_r.
\end{equation}

We obtain from (\ref{first_perturb}) and (\ref{EhatP_r}) that 
\begin{equation}
\label{center_b}
\hat P_r - (1+b_r)P_r = L_r(E)+ S_r(E)-{\mathbb E}S_r(E) +T_r.
\end{equation}
Denote 
$$
\rho_r(u):= \langle (\hat P_r - (1+b_r)P_r)\theta_r, u\rangle, u\in {\mathbb H}. 
$$
As in \cite{KoltchinskiiLouniciPCAAHP},
the function $\rho_r(u), u\in {\mathbb H}$ will be used in what follows to control the linear forms 
$\langle \hat \theta_r- \sqrt{1+b_r}\theta_r,u\rangle, u\in {\mathbb H}.$ First, we need to derive 
some bounds on $\rho_r(u).$

The following lemma is an immediate consequence of (\ref{center_b}), (\ref{conc_S_r}) and (\ref{bd_T_r}).

\begin{lemma}
Suppose condition (\ref{cond_gamma}) holds for some $\gamma\in (0,1).$
Then, for all $u\in {\mathbb H}$ and for all $t\geq 1$ with probability at least $1-e^{-t}$
\begin{equation}
\label{rho-L}
|\rho_r(u)-\langle L_r(E)\theta_r,u\rangle| \lesssim_{\gamma} 
\frac{\|\Sigma\|^2}{g_r^2}\biggl(\sqrt{\frac{{\bf r}(\Sigma)}{n}}\bigvee \sqrt{\frac{t}{n}}\bigvee \frac{t}{n}\biggr)\sqrt{\frac{t}{n}} \|u\|. 
\end{equation}
\end{lemma}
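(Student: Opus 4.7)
The plan is to recognize this as a straightforward bookkeeping exercise: the decomposition (\ref{center_b}) already tells us exactly how $\hat P_r - (1+b_r)P_r$ splits into a linear part, a centred nonlinear perturbation part, and a bias remainder, and we only need to evaluate this identity at $\theta_r$, pair with $u$, and bound the two error terms using results already quoted from \cite{KoltchinskiiLouniciPCAAHP}.

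Concretely, I would first apply (\ref{center_b}) to $\theta_r$ and take inner product with $u$ to obtain the exact identity
\begin{equation*}
\rho_r(u) - \langle L_r(E)\theta_r, u\rangle = \langle (S_r(E)-\mathbb{E}S_r(E))\theta_r, u\rangle + \langle T_r \theta_r, u\rangle.
\end{equation*}
This is just the definition of $\rho_r(u)$ combined with linearity, and it reduces the task to bounding each of the two summands on the right-hand side.

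For the first summand, I would invoke the concentration bound (\ref{conc_S_r}) with the pair of arguments $(\theta_r, u)$. Since $\|\theta_r\| = 1$, with probability at least $1 - e^{-t}$ this contribution is already controlled by
$\frac{\|\Sigma\|^2}{g_r^2}\bigl(\sqrt{{\bf r}(\Sigma)/n}\vee \sqrt{t/n}\vee t/n\bigr)\sqrt{t/n}\,\|u\|$,
which is exactly the bound claimed in (\ref{rho-L}) up to the absolute constant. For the second summand, since we are in the rank-one (simple eigenvalue) regime where $\rho_r(u)$ is defined, we have $m_r = 1$, so the deterministic bound (\ref{bd_T_r}) gives
\begin{equation*}
|\langle T_r \theta_r, u\rangle| \leq \|T_r\|\,\|u\| \lesssim_\gamma \frac{\|\Sigma\|^2}{g_r^2}\sqrt{\frac{{\bf r}(\Sigma)}{n}}\frac{1}{\sqrt{n}}\|u\|.
\end{equation*}

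Finally, because $t \geq 1$ implies $1/\sqrt{n}\leq \sqrt{t/n}$, the bias term is dominated by $\frac{\|\Sigma\|^2}{g_r^2}\sqrt{{\bf r}(\Sigma)/n}\sqrt{t/n}\,\|u\|$, which is absorbed into the right-hand side of (\ref{rho-L}). Adding the two contributions (and adjusting the constant as discussed in the conventions subsection to retain the $1 - e^{-t}$ probability) yields the claim. I do not expect any genuine obstacle here; the only minor care needed is to verify that the dominance $1/\sqrt{n}\leq \sqrt{t/n}$ is enough to swallow the $T_r$ term so that the final bound has the exact form stated.
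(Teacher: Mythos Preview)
Your proposal is correct and matches the paper's own argument exactly: the paper states that the lemma ``is an immediate consequence of (\ref{center_b}), (\ref{conc_S_r}) and (\ref{bd_T_r}),'' which is precisely the decomposition and two bounds you invoke. The absorption of the deterministic $T_r$ term via $t\geq 1$ is the only detail to check, and you handle it correctly.
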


We will need simple concentration and normal approximation bounds for $\langle L_r(E)\theta_r,u\rangle$ given 
in the next lemma.

\begin{lemma}
\label{Lemma:conc_L_r}
For all $t\geq 1$ with probability at least $1-e^{-t}$
\begin{equation}
\label{conc_L_r}
|\langle L_r(E)\theta_r,u\rangle| \lesssim \sigma_r (\Sigma;u) \biggl(\sqrt{\frac{t}{n}}\bigvee \frac{t}{n}\biggr).
\end{equation}
Moreover, if $\sigma_r(\Sigma;u)>0,$ then
\begin{equation}
\label{normal_approx_L_r}
\sup_{x\in {\mathbb R}}\bigl|{\mathbb P}\bigl\{\frac{\sqrt{n}\langle L_r(E)\theta_r,u\rangle}{\sigma_r(\Sigma;u)}\leq x\bigr\}-\Phi(x)\bigr|
\lesssim \frac{1}{\sqrt{n}},
\end{equation}
where $\Phi$ is the distribution function of standard normal r.v.
\end{lemma}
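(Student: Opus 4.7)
The plan is to reduce the quadratic form $\langle L_r(E)\theta_r,u\rangle$ to an explicit i.i.d. sum and then apply standard concentration and Berry--Esseen bounds.

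First I would compute $L_r(E)\theta_r$ explicitly. Since $C_r$ is supported on $\mathrm{Im}(P_s)$ for $s\neq r$ and $\theta_r\in \mathrm{Im}(P_r)$, we have $C_r\theta_r=0$, so the term $P_rEC_r\theta_r$ vanishes and $L_r(E)\theta_r = C_rE\theta_r$. Using self-adjointness of $C_r$ and writing $E=\hat\Sigma-\Sigma=n^{-1}\sum_j(X_j\otimes X_j-\Sigma)$, this gives
\begin{equation*}
\langle L_r(E)\theta_r,u\rangle = \langle E\theta_r, C_r u\rangle = \frac{1}{n}\sum_{j=1}^n Y_j,
\end{equation*}
where $Y_j:=\langle X_j,\theta_r\rangle\langle X_j, C_ru\rangle - \langle \Sigma\theta_r, C_ru\rangle$. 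The centering constant equals $\mu_r\langle\theta_r,C_ru\rangle=\mu_r\langle C_r\theta_r,u\rangle=0$, so $Y_j=\langle X_j,\theta_r\rangle\langle X_j,C_ru\rangle$ is a mean-zero i.i.d.\ sum.

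Next I would identify the joint Gaussian structure. The two linear forms $\langle X_j,\theta_r\rangle$ and $\langle X_j,C_ru\rangle$ are jointly Gaussian with covariance $\langle \Sigma\theta_r,C_ru\rangle=\mu_r\langle C_r\theta_r,u\rangle=0$, hence \emph{independent}, with variances $\mu_r$ and $\langle \Sigma C_ru,C_ru\rangle$ respectively. Therefore $Y_j$ is a product of two independent centered Gaussians, so
\begin{equation*}
\Var(Y_j) = \mu_r\langle \Sigma C_ru,C_ru\rangle = \sigma_r^2(\Sigma;u),
\end{equation*}
and $\|Y_j\|_{\psi_1}\lesssim \|\langle X_j,\theta_r\rangle\|_{\psi_2}\|\langle X_j,C_ru\rangle\|_{\psi_2}\lesssim \sigma_r(\Sigma;u)$, so $Y_j$ is sub-exponential with parameter of order $\sigma_r(\Sigma;u)$.

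For the concentration bound \eqref{conc_L_r}, I would apply Bernstein's inequality for sums of i.i.d.\ sub-exponential random variables to the average $n^{-1}\sum_j Y_j$: for all $t\geq 1$, with probability $\geq 1-e^{-t}$,
\begin{equation*}
\Bigl|\frac{1}{n}\sum_{j=1}^n Y_j\Bigr|\lesssim \sigma_r(\Sigma;u)\Bigl(\sqrt{\tfrac{t}{n}}\bigvee \tfrac{t}{n}\Bigr),
\end{equation*}
which is exactly \eqref{conc_L_r}. For the normal approximation \eqref{normal_approx_L_r}, I would apply the classical Berry--Esseen theorem to the i.i.d.\ average $n^{-1}\sum_jY_j$, whose rate is controlled by $\mathbb{E}|Y_j|^3/(\sqrt{n}\,\sigma_r^3(\Sigma;u))$. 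Since $Y_j$ is a product of independent centered Gaussians, independence factorizes the moment: $\mathbb{E}|Y_j|^3 = \mathbb{E}|\langle X_j,\theta_r\rangle|^3\,\mathbb{E}|\langle X_j,C_ru\rangle|^3 \lesssim \mu_r^{3/2}\langle \Sigma C_ru,C_ru\rangle^{3/2} = \sigma_r^3(\Sigma;u)$, so $\mathbb{E}|Y_j|^3/\sigma_r^3(\Sigma;u)\lesssim 1$ and Berry--Esseen immediately delivers the $1/\sqrt{n}$ rate.

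There is no real obstacle here once the key algebraic identity $C_r\theta_r=0$ is noticed, since it collapses $L_r(E)\theta_r$ to a linear statistic in $E$ and reduces everything to classical i.i.d.\ sub-exponential theory. The only mildly delicate point is verifying that the two Gaussian linear forms in the product are \emph{independent} (not merely uncorrelated); this follows from joint Gaussianity, and it is what makes the sub-exponential constant and the third moment factor cleanly into $\sigma_r(\Sigma;u)$ and $\sigma_r^3(\Sigma;u)$ without extra cross terms.
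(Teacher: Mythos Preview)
Your proof is correct and follows the same overall strategy as the paper: reduce $\langle L_r(E)\theta_r,u\rangle$ to an i.i.d.\ sum using $C_r\theta_r=0$, then apply a Bernstein-type bound for sub-exponential variables and the Berry--Esseen theorem. The difference is in how the summands are analyzed. The paper rewrites $X_j=\Sigma^{1/2}Z_j$ with standard Gaussian $Z_j$, introduces the symmetric rank-two operator $D=\tfrac12\Sigma^{1/2}(\theta_r\otimes C_ru+C_ru\otimes\theta_r)\Sigma^{1/2}$, and diagonalizes it to obtain a weighted sum of centered $\chi^2$ variables; the concentration and Berry--Esseen constants are then expressed via $\|D\|,\|D\|_2,\|D\|_3$. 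You instead observe directly that $\langle X_j,\theta_r\rangle$ and $\langle X_j,C_ru\rangle$ are jointly Gaussian with zero covariance $\langle\Sigma\theta_r,C_ru\rangle=0$, hence independent, so $Y_j$ is a product of independent centered Gaussians and all moments factor. This is a cleaner shortcut: it bypasses the operator $D$ entirely and makes the sub-exponential constant and third-moment ratio visibly equal (up to absolute constants) to $\sigma_r(\Sigma;u)$ and $1$. The paper's route is slightly more general in spirit (it would apply to an arbitrary symmetric $D$ without using the orthogonality of $\Sigma^{1/2}\theta_r$ and $\Sigma^{1/2}C_ru$), but in this specific setting your argument is more direct.
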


\begin{proof}
Without loss of generality, assume that the space ${\mathbb H}$ is finite-dimensional (the general case follows by a simple approximation argument). 
Since $L_r(E)= P_rEC_r+C_rEP_r$ and $C_r\theta_r=0,$ we have 
$$
\langle L_r(E)\theta_r, u\rangle= \langle C_rEP_r \theta_r, u\rangle = \langle E\theta_r, C_r u\rangle = \langle E, \theta_r \otimes C_r u\rangle. 
$$

Since $E$ is self-adjoint, we obtain that 
$$
\langle L_r(E)\theta_r, u\rangle = \frac{1}{2}\langle E, \theta_r \otimes C_r u+ C_r u \otimes \theta_r\rangle.
$$
Let $Z, Z_1,\dots, Z_n$ be i.i.d. standard normal vectors in ${\mathbb H}$ such that $X_j = \Sigma^{1/2} Z_j.$
Then 
$$
E = \Sigma^{1/2} \biggl(n^{-1}\sum_{j=1}^n Z_j\otimes Z_j -{\mathbb E}(Z\otimes Z)\biggr)\Sigma^{1/2}. 
$$
Defining 
$$
D:= \frac{1}{2}\Sigma^{1/2}(\theta_r \otimes C_r u+ C_r u \otimes \theta_r)\Sigma^{1/2}
= \frac{1}{2}\bigl(\Sigma^{1/2}\theta_r \otimes \Sigma^{1/2}C_ru + \Sigma^{1/2}C_r u\otimes \Sigma^{1/2}\theta_r\bigr),
$$ 
we obtain that
\begin{align*}
\langle L_r(E)\theta_r, u\rangle &  = \biggl\langle n^{-1}\sum_{j=1}^n Z_j\otimes Z_j -{\mathbb E}(Z\otimes Z), D\biggr\rangle \\
& = n^{-1}\sum_{j=1}^n (\langle DZ_j,Z_j\rangle - {\mathbb E}\langle DZ,Z\rangle).
\end{align*}
Clearly,
$
\langle DZ,Z\rangle \stackrel{d}{=} \sum_{k} \lambda_k g_k^2, 
$
where $\{\lambda_k\}$ are the eigenvalues of $D$ and $\{g_k\}$ are i.i.d.
standard normal r.v. It easily follows that 
$$
{\mathbb E} \langle DZ,Z\rangle = {\rm tr}(D)=0
$$
and 
$$
{\rm Var}(\langle DZ,Z\rangle)= 2\sum_{k}\lambda_k^2 = 2\|D\|_2^2
=\sigma_r^2 (\Sigma;u).
$$ 
We can now represent $\langle L_r(E)\theta_r, u\rangle$ as follows:
$$
\langle L_r(E)\theta_r, u\rangle \stackrel{d}{=} n^{-1}\sum_{j=1}^n \sum_{k} \lambda_k (g_{k,j}^2-1),
$$
where $\{g_{k,j}\}$ are i.i.d. standard normal r.v. Using standard exponential bounds for sums of independent $\psi_1$ r.v. 
(see, e.g., \cite{Vershynin}, Proposition 5.16 or Theorem 3.1.9 in \cite{GN16}), we obtain that with probability at least $1-e^{-t}$
$$
\bigl|n^{-1}\sum_{j=1}^n \sum_{k} \lambda_k (g_{k,j}^2-1)\bigr| \lesssim 
\biggl(\sum_{k}\lambda_k^2\biggr)^{1/2}\sqrt{\frac{t}{n}}  \bigvee \sup_{k}|\lambda_k|\frac{t}{n},
$$
which implies that with the same probability 
$$
|\langle L_r(E)\theta_r, u\rangle| 
\lesssim 
\|D\|_2\sqrt{\frac{t}{n}}  \bigvee \|D\|\frac{t}{n}.
$$
Since $\|D\|\leq \|D\|_2 = \frac{1}{2} \sigma_r^{2}(\Sigma;u),$ bound (\ref{conc_L_r}) follows. 

To prove (\ref{normal_approx_L_r}), we use the Berry-Esseen bound that implies 
$$
\sup_{x\in {\mathbb R}}\bigl|{\mathbb P}\bigl\{    
\frac{\sum_{j=1}^n \sum_{k} \lambda_k (g_{k,j}^2-1)}
{\sqrt{n} (2\sum_{k}\lambda_k^2)^{1/2}}
\leq x\bigr\}-\Phi(x)\bigr|
\lesssim \frac{\sum_{k}|\lambda_k|^3}{(\sum_{k}\lambda_k^2)^{3/2}}\frac{1}{\sqrt{n}},
$$ 
and therefore
$$
\sup_{x\in {\mathbb R}}\bigl|{\mathbb P}\bigl\{\frac{\sqrt{n}\langle L_r(E)\theta_r,u\rangle}{\sigma_r(\Sigma;u)}\leq x\bigr\}-\Phi(x)\bigr|
\lesssim \frac{\|D\|_3^3}{\|D\|_2^3} \frac{1}{\sqrt{n}}\lesssim \frac{\|D\|}{\|D\|_2}\frac{1}{\sqrt{n}}\lesssim 
\frac{1}{\sqrt{n}}.
$$
\end{proof}

The following bounds on $\rho_r(u)$ immediately follow from (\ref{rho-L}) and (\ref{conc_L_r}).

\begin{lemma}
Suppose condition (\ref{cond_gamma}) holds for some $\gamma\in (0,1).$
Then, for all $u\in {\mathbb H}$ and for all $t\geq 1$ with probability at least $1-e^{-t}$
\begin{eqnarray}
\label{bd_rho}
&
|\rho_r(u)| \lesssim_{\gamma} 
\sigma_r(\Sigma;u)
\biggl(\sqrt{\frac{t}{n}}\bigvee \frac{t}{n}\biggr)+
\frac{\|\Sigma\|^2}{g_r^2}\biggl(\sqrt{\frac{{\bf r}(\Sigma)}{n}}\bigvee \sqrt{\frac{t}{n}}\biggr)\sqrt{\frac{t}{n}} \|u\|.
\end{eqnarray}
Moreover, with the same probability 
\begin{eqnarray}
\label{bd_rho_other}
&
|\rho_r(u)| 
\lesssim_{\gamma}
\frac{\|\Sigma\|}{g_r}\sqrt{\frac{t}{n}}\|u\|+
\frac{\|\Sigma\|^2}{g_r^2}\biggl(\sqrt{\frac{{\bf r}(\Sigma)}{n}}\bigvee \sqrt{\frac{t}{n}}\biggr)\sqrt{\frac{t}{n}} \|u\|.
\end{eqnarray}
and, for $u=\theta_r,$ 
\begin{equation}
\label{bd_rho_theta}
|\rho_r(\theta_r)| \lesssim_{\gamma} 
\frac{\|\Sigma\|^2}{g_r^2}\biggl(\sqrt{\frac{{\bf r}(\Sigma)}{n}}\bigvee \sqrt{\frac{t}{n}}\biggr)\sqrt{\frac{t}{n}}.
\end{equation}
\end{lemma}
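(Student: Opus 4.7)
The plan is to combine the two bounds cited immediately before the lemma, noting that the three inequalities to prove correspond to three ways of controlling $\rho_r(u)$: via $\sigma_r(\Sigma;u)$, via the crude variance bound (\ref{bd_sigma_r}), and via the special structure at $u=\theta_r$.

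First, I would derive (\ref{bd_rho}) as a straight application of the triangle inequality. Writing
\[
|\rho_r(u)| \leq |\langle L_r(E)\theta_r,u\rangle| + |\rho_r(u)-\langle L_r(E)\theta_r,u\rangle|,
\]
I would bound the first term by the concentration inequality (\ref{conc_L_r}) and the second by (\ref{rho-L}), each holding with probability at least $1-e^{-t}$; by adjusting constants as described in the introduction, both hold simultaneously on an event of probability at least $1-e^{-t}$. On the range $t\leq n$ the term $\frac{t}{n}$ inside the maximum of (\ref{rho-L}) is absorbed by $\sqrt{\frac{t}{n}}$, and for $t>n$ the exponential probability makes this regime inconsequential after reshuffling constants, yielding the form of (\ref{bd_rho}).

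Next, (\ref{bd_rho_other}) would be an immediate consequence of (\ref{bd_rho}) once I plug in the a priori bound $\sigma_r(\Sigma;u)\leq \frac{\|\Sigma\|}{g_r}\|u\|$ stated in (\ref{bd_sigma_r}), which converts the first term of (\ref{bd_rho}) into $\frac{\|\Sigma\|}{g_r}\sqrt{\frac{t}{n}}\|u\|$ (plus a higher-order $t/n$ contribution that can be absorbed into the remainder since it is dominated, up to constants, by the $\frac{\|\Sigma\|^2}{g_r^2}\sqrt{\frac{{\bf r}(\Sigma)}{n}}\sqrt{\frac{t}{n}}\|u\|$ piece of the second term whenever ${\bf r}(\Sigma)\geq 1$).

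Finally, for (\ref{bd_rho_theta}) I would exploit the identity $\langle L_r(E)\theta_r,\theta_r\rangle = 0$. Indeed, from $L_r(E)=P_rEC_r+C_rEP_r$ and $C_r\theta_r=0$ (since $C_r=\sum_{s\neq r}(\mu_r-\mu_s)^{-1}P_s$ vanishes on $\mathrm{Im}(P_r)$) we get $L_r(E)\theta_r = C_rE\theta_r$; then by self-adjointness of $C_r$, $\langle C_rE\theta_r,\theta_r\rangle = \langle E\theta_r,C_r\theta_r\rangle=0$. Hence $\rho_r(\theta_r)=\rho_r(\theta_r)-\langle L_r(E)\theta_r,\theta_r\rangle$, and (\ref{rho-L}) applied with $u=\theta_r$, $\|\theta_r\|=1$, yields (\ref{bd_rho_theta}) directly.

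There is essentially no hard step; the only minor subtlety is the handling of the $\frac{t}{n}$ term inside the parenthesis of (\ref{rho-L}), which is resolved by a standard constant adjustment in the regime $t\leq n$ (and by the exponential decay of the probability outside it).
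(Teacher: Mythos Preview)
Your proposal is correct and follows essentially the same route as the paper: the paper also derives \eqref{bd_rho} by combining \eqref{rho-L} and \eqref{conc_L_r} via the triangle inequality, obtains \eqref{bd_rho_other} from \eqref{bd_rho} using $\sigma_r(\Sigma;u)\le \frac{\|\Sigma\|}{g_r}\|u\|$, and gets \eqref{bd_rho_theta} directly from \eqref{rho-L} together with $\langle L_r(E)\theta_r,\theta_r\rangle=0$.

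The one place where the paper argues a bit differently is the disposal of the $\frac{t}{n}$ term. Your remark that ``the exponential probability makes this regime inconsequential'' is not quite the right justification: for $t>n$ one still has to exhibit an event of probability at least $1-e^{-t}$ on which the stated bound holds. The paper instead observes that $|\rho_r(u)|\le 2\|u\|$ deterministically and that, since $\|\Sigma\|/g_r\ge 1$, the right-hand sides of \eqref{bd_rho}--\eqref{bd_rho_theta} already exceed $2\|u\|$ (up to the implicit constant) whenever $t\ge n$, so the bounds hold trivially there. Your alternative suggestion in the step to \eqref{bd_rho_other}---absorbing $\frac{\|\Sigma\|}{g_r}\frac{t}{n}\|u\|$ into the second term---also works, but via the $\sqrt{t/n}$ branch of the maximum (giving $\frac{\|\Sigma\|^2}{g_r^2}\frac{t}{n}\|u\|$), not the $\sqrt{{\bf r}(\Sigma)/n}$ branch you named.
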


Note that we dropped the term $\frac{t}{n}$ in some of the expressions on the right hand side of the above bounds (compare with (\ref{rho-L})). This term is dominated by $\sqrt{\frac{t}{n}}$
for $t\leq n.$ Moreover, it follows from the definition of $\rho_r(u)$ that it is upper bounded by $2\|u\|.$ 
Since $\frac{\|\Sigma\|}{g_r}\geq 1,$ this easily implies that, for $t\geq n,$ the right hand side of bound (\ref{bd_rho_other})
(with a proper constant) is larger than $|\rho_r(u)|.$
Bound (\ref{bd_rho_theta}) follows from (\ref{rho-L}) since $\langle L_r(E) \theta_r, \theta_r\rangle=0.$

To study concentration and normal approximation of the linear form
$$\langle \hat \theta_r- \sqrt{1+b_r}\theta_r,u\rangle, u\in {\mathbb H},$$
it remains to prove that it can be approximated by $\langle L_r(E)\theta_r,u\rangle.$

\begin{lemma}
Suppose that for some $\gamma\in (0,1)$ condition (\ref{cond_gamma}) holds and, in addition, 
 \begin{equation}
 \label{cond_b_r}
 1+b_r \geq \gamma.
 \end{equation} 
 Then, for all $u\in {\mathbb H}$ and for all $t\geq 1,$ with probability at least $1-e^{-t}$
 \begin{equation}
\label{theta-L}
|\langle \hat \theta_r- \sqrt{1+b_r}\theta_r,u\rangle- \langle L_r(E)\theta_r,u\rangle|
\lesssim_{\gamma} \frac{\|\Sigma\|^2}{g_r^2}\biggl(\sqrt{\frac{{\bf r}(\Sigma)}{n}}\bigvee \sqrt{\frac{t}{n}}\bigvee \frac{t}{n}\biggr)\sqrt{\frac{t}{n}}\|u\|.
\end{equation}
\end{lemma}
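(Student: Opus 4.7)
\medskip

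\noindent\textbf{Proof plan.}
The plan is to exploit the one--dimensionality of $\hat P_r$ (and of $P_r$) in order to pass from the bound on the centered projection $\hat P_r-(1+b_r)P_r$ already established above to a bound on the centered eigenvector $\hat\theta_r-\sqrt{1+b_r}\theta_r$. Since $\hat P_r=\hat\theta_r\otimes\hat\theta_r$, applying $\hat P_r$ to $\theta_r$ yields
\begin{equation*}
\hat P_r\theta_r=\langle\hat\theta_r,\theta_r\rangle\hat\theta_r=\alpha\hat\theta_r,
\qquad \alpha:=\langle\hat\theta_r,\theta_r\rangle\geq 0,
\end{equation*}
so that, away from the event $\{\alpha=0\}$, one has $\hat\theta_r=\hat P_r\theta_r/\alpha$. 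Taking the $\langle\cdot,\theta_r\rangle$ component of (\ref{center_b}) gives the key identity $\alpha^2=1+b_r+\rho_r(\theta_r)$, which expresses $\alpha$ directly in terms of the already understood small quantity $\rho_r(\theta_r)$.

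\smallskip

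I would then use this to write, for arbitrary $u\in{\mathbb H}$,
\begin{equation*}
\langle\hat\theta_r-\sqrt{1+b_r}\,\theta_r,u\rangle
=\frac{\rho_r(u)}{\alpha}+\frac{\sqrt{1+b_r}\,(\sqrt{1+b_r}-\alpha)}{\alpha}\langle\theta_r,u\rangle,
\end{equation*}
and subtract $Y:=\langle L_r(E)\theta_r,u\rangle$ to obtain the three--term decomposition
\begin{equation*}
\langle\hat\theta_r-\sqrt{1+b_r}\,\theta_r,u\rangle-Y
=\underbrace{\frac{\rho_r(u)-Y}{\alpha}}_{A_1}
+\underbrace{\frac{1-\alpha}{\alpha}\,Y}_{A_2}
+\underbrace{\frac{\sqrt{1+b_r}\,(\sqrt{1+b_r}-\alpha)}{\alpha}\langle\theta_r,u\rangle}_{A_3}.
\end{equation*}
Using the elementary factorizations $\sqrt{1+b_r}-\alpha=-\rho_r(\theta_r)/(\sqrt{1+b_r}+\alpha)$ and $1-\alpha=-(b_r+\rho_r(\theta_r))/(1+\alpha)$, together with the assumption $1+b_r\geq\gamma$ and the fact that on the good event $|\rho_r(\theta_r)|$ is small (so that $\alpha\gtrsim_\gamma 1$), the factors $1/\alpha$, $\sqrt{1+b_r}/\alpha$ and $1/(1+\alpha)$, $1/(\alpha+\sqrt{1+b_r})$ are $\lesssim_\gamma 1$.

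\smallskip

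On the intersection of the high--probability events on which (\ref{rho-L}), (\ref{bd_rho_theta}) and (\ref{conc_L_r}) hold (this intersection has probability at least $1-3e^{-t}$, reabsorbed into $1-e^{-t}$ by adjusting constants as explained in the preliminaries), the three pieces are controlled as follows. The term $A_1$ is directly bounded by (\ref{rho-L}). The term $A_3$ is of size $|\rho_r(\theta_r)|\,\|u\|$ and is therefore controlled by (\ref{bd_rho_theta}), matching the target bound. For the term $A_2$, one uses $|1-\alpha|\leq|b_r|+|\rho_r(\theta_r)|$ together with $|b_r|\lesssim (\|\Sigma\|^2/g_r^2)\,({\bf r}(\Sigma)/n)$ (coming from (\ref{bias_main}) specialized to the rank--one case) and the variance bound (\ref{conc_L_r}) giving $|Y|\lesssim(\|\Sigma\|/g_r)(\sqrt{t/n}\vee t/n)\|u\|$.

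\smallskip

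The main obstacle is precisely this middle term $A_2$: a naive multiplication of the bounds on $|b_r|$ or $|\rho_r(\theta_r)|$ and $|Y|$ produces an unwanted extra factor of $\|\Sigma\|/g_r$, yielding $(\|\Sigma\|/g_r)^3$ instead of the desired $(\|\Sigma\|/g_r)^2$ in the prefactor. This is resolved by using (\ref{cond_gamma}) together with Theorem \ref{KL-Bernoulli}, which imply $(\|\Sigma\|/g_r)(\sqrt{{\bf r}(\Sigma)/n}\vee{\bf r}(\Sigma)/n)\lesssim_\gamma 1$, so that the offending factor $\|\Sigma\|/g_r$ can be absorbed at the cost of either a $\sqrt{{\bf r}(\Sigma)/n}$ factor (when $t\leq{\bf r}(\Sigma)$) or by trading against a factor of $\sqrt{t/n}\vee t/n$ in the larger--$t$ regime using the trivial estimate $|\langle\hat\theta_r-\sqrt{1+b_r}\theta_r,u\rangle-Y|\leq(2+2\|\Sigma\|/g_r)\|u\|$. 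Combining the resulting estimates for $A_1,A_2,A_3$ yields (\ref{theta-L}).
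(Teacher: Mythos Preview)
Your proposal is correct and follows essentially the same route as the paper: both start from the identity $\alpha^2=\langle\hat\theta_r,\theta_r\rangle^2=1+b_r+\rho_r(\theta_r)$ (this is exactly the representation (\ref{represent_theta_r}) taken from \cite{KoltchinskiiLouniciPCAAHP}), then control the resulting terms using (\ref{rho-L}), (\ref{bd_rho_theta}), (\ref{conc_L_r}) and the bias bound (\ref{bias_main}), absorb the superfluous factor $\|\Sigma\|/g_r$ via $(\|\Sigma\|/g_r)\sqrt{{\bf r}(\Sigma)/n}\lesssim_\gamma 1$, and fall back on the trivial bound when the target already dominates $\|u\|$. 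The only cosmetic difference is that the paper first bounds $|\langle\hat\theta_r-\sqrt{1+b_r}\theta_r,u\rangle-\rho_r(u)|$ (its bound (\ref{theta-rho})) and applies (\ref{rho-L}) at the end, whereas you subtract $Y=\langle L_r(E)\theta_r,u\rangle$ at the outset and carry the extra cross term $A_2=\frac{1-\alpha}{\alpha}Y$; since $|Y|$ and the leading part of $|\rho_r(u)|$ satisfy the same bound $(\|\Sigma\|/g_r)\sqrt{t/n}\,\|u\|$, the two organizations yield identical estimates.
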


\begin{proof}
We use the following representation obtained in \cite{KoltchinskiiLouniciPCAAHP} (see (6.7) in \cite{KoltchinskiiLouniciPCAAHP}), which holds provided that $\hat \theta_r$ and $\theta_r$
are properly aligned so that $\langle \hat \theta_r,\theta_r\rangle \geq 0:$
\begin{eqnarray}
\label{represent_theta_r}
&
\langle \hat \theta_r- \sqrt{1+b_r}\theta_r,u\rangle 
= \frac{\rho_r(u)}{\sqrt{1+b_r+\rho_r(\theta_r)}}
\\
&
\nonumber
 - \frac{\sqrt{1+b_r}}{\sqrt{1+b_r+\rho_r(\theta_r)}(\sqrt{1+b_r+\rho_r(\theta_r)}+\sqrt{1+b_r})}\rho_{r}(\theta_r)\langle \theta_r,u\rangle
\end{eqnarray}
(it is clear from the proof given in \cite{KoltchinskiiLouniciPCAAHP} that $1+b_r+\rho_r(\theta_r)\geq 0$). Denote 
$$
\nu_r := \frac{\rho_r(\theta_r)}{1+b_r}.
$$
Then, it is easy to see that 
\begin{align}
\label{theta_rho}
\langle \hat \theta_r- \sqrt{1+b_r}\theta_r,u\rangle= \rho_r(u)& - \frac{b_r/(1+b_r)+\nu_r}{1+\nu_r+ \sqrt{(1+\nu_r)/(1+b_r)}}\rho_r(u)
\\
&
\notag 
-\frac{\nu_r\sqrt{1+b_r}}{1+\nu_r+ \sqrt{1+\nu_r}}\langle \theta_r,u\rangle.
\end{align}
Recall that (\ref{cond_gamma}) and (\ref{cond_b_r})
hold for some $\gamma \in (0,1).$ 
If $|\nu_r|\leq 1/2,$
then (\ref{theta_rho}) easily implies that 
\begin{equation}
\label{theta_rho_1}
|\langle \hat \theta_r- \sqrt{1+b_r}\theta_r,u\rangle- \rho_r(u)| \leq \frac{1}{\gamma} (|b_r|+|\nu_r|) 
|\rho_r(u)| + |\nu_r| |\langle \theta_r,u\rangle|.
\end{equation}
It also follows from (\ref{bd_rho_theta}) that, under condition (\ref{cond_b_r}),
\begin{equation}
\label{bd_nu_r}
|\nu_r| \lesssim_{\gamma} 
\frac{\|\Sigma\|^2}{g_r^2}\biggl(\sqrt{\frac{{\bf r}(\Sigma)}{n}}\bigvee \sqrt{\frac{t}{n}}\biggr)\sqrt{\frac{t}{n}} 
\end{equation}
with probability at least $1-e^{-t}.$ On the other hand, bound (\ref{bias_main}) implies that 
\begin{equation}
\label{bd_b_r}
|b_r|\lesssim \frac{\|\Sigma\|^2}{g_r^2}\frac{{\bf r}(\Sigma)}{n}.
\end{equation}
It follows from (\ref{bd_nu_r}) that for the condition $|\nu_r|\leq 1/2$ to hold with probability at least $1-e^{-t},$ it is enough 
to have
\begin{equation}
\label{bd_nu_r_A}
\frac{\|\Sigma\|^2}{g_r^2}\biggl(\sqrt{\frac{{\bf r}(\Sigma)}{n}}\bigvee \sqrt{\frac{t}{n}}\biggl)\sqrt{\frac{t}{n}}\leq c_{\gamma}
\end{equation}
for a small enough constant $c_{\gamma}>0.$ Assume that (\ref{bd_nu_r_A}) holds. Note also that it implies that $t\lesssim n$
and condition (\ref{cond_gamma}) and Theorem \ref{KL-Bernoulli} imply that $\frac{\|\Sigma\|}{g_r}\sqrt{\frac{{\bf r}(\Sigma)}{n}}\lesssim 1.$
It follows from (\ref{theta_rho_1}), (\ref{bd_rho_other}), (\ref{bd_nu_r}) and (\ref{bd_b_r}) that with probability at least $1-3e^{-t}$:
\begin{eqnarray}
\label{bd_Long}
&
\nonumber
|\langle \hat \theta_r- \sqrt{1+b_r}\theta_r,u\rangle- \rho_r(u)| 
\\
&
\nonumber
\lesssim_{\gamma} 
\biggr[
\frac{\|\Sigma\|^2}{g_r^2}\frac{{\bf r}(\Sigma)}{n}
+ 
\biggl(
\frac{\|\Sigma\|^2}{g_r^2}\biggl(\sqrt{\frac{{\bf r}(\Sigma)}{n}}\bigvee \sqrt{\frac{t}{n}}\biggr)\sqrt{\frac{t}{n}}
\biggr)
\wedge 1/2
\biggl]
\\
&
\nonumber
\times 
\biggl[ 
\frac{\|\Sigma\|}{g_r}\sqrt{\frac{t}{n}}\|u\|+ \frac{\|\Sigma\|^2}{g_r^2}\biggl(\sqrt{\frac{{\bf r}(\Sigma)}{n}}\bigvee \sqrt{\frac{t}{n}}\biggr)\sqrt{\frac{t}{n}}\|u\|
\biggr]
\\
&
+\frac{\|\Sigma\|^2}{g_r^2}\biggl(\sqrt{\frac{{\bf r}(\Sigma)}{n}}\bigvee \sqrt{\frac{t}{n}}\biggr)\sqrt{\frac{t}{n}}\|u\|.
\end{eqnarray}
Using the facts that 
$$
\frac{\|\Sigma\|^2}{g_r^2}\frac{{\bf r}(\Sigma)}{n}\lesssim 
\frac{\|\Sigma\|}{g_r}\sqrt{\frac{{\bf r}(\Sigma)}{n}}\lesssim 1,
$$
that 
$$
\frac{\|\Sigma\|^2}{g_r^2}\frac{t}{n}\lesssim \frac{\|\Sigma\|}{g_r}\sqrt{\frac{t}{n}}\lesssim 1
$$
and that 
$$
\frac{\|\Sigma\|^2}{g_r^2}\sqrt{\frac{{\bf r}(\Sigma)}{n}}\sqrt{\frac{t}{n}}
\lesssim 
\frac{\|\Sigma\|}{g_r}\biggl(\frac{{\bf r}(\Sigma)}{n}\biggr)^{1/4}\biggl(\frac{t}{n}\biggr)^{1/4}
\leq \frac{\|\Sigma\|}{g_r} \biggl(\sqrt{\frac{{\bf r}(\Sigma)}{n}}\bigvee \sqrt{\frac{t}{n}}\biggr)
$$
(that follow from condition (\ref{bd_nu_r_A})), it is easy to conclude that the last term
$$
\frac{\|\Sigma\|^2}{g_r^2}\biggl(\sqrt{\frac{{\bf r}(\Sigma)}{n}}\bigvee \sqrt{\frac{t}{n}}\biggr)\sqrt{\frac{t}{n}}\|u\|
$$
in the right hand side of bound (\ref{bd_Long}) is dominant. Hence, with probability at least $1-e^{-t}$
\begin{equation}
|\langle \hat \theta_r- \sqrt{1+b_r}\theta_r,u\rangle- \rho_r(u)| \lesssim_{\gamma}
\frac{\|\Sigma\|^2}{g_r^2}\biggl(\sqrt{\frac{{\bf r}(\Sigma)}{n}}\bigvee \sqrt{\frac{t}{n}}\biggr)\sqrt{\frac{t}{n}}\|u\|
\end{equation}
provided that condition (\ref{bd_nu_r_A}) holds. On the other hand, if 
\begin{equation*}
\frac{\|\Sigma\|^2}{g_r^2}\biggl(\sqrt{\frac{{\bf r}(\Sigma)}{n}}\bigvee \sqrt{\frac{t}{n}}\biggr)\sqrt{\frac{t}{n}}> c_{\gamma},
\end{equation*}
then
$$
|\langle \hat \theta_r- \sqrt{1+b_r}\theta_r,u\rangle- \rho_r(u)| \leq |\langle \hat \theta_r- \sqrt{1+b_r}\theta_r,u\rangle|+|\rho_r(u)| 
$$
$$
\leq (\|\hat \theta_r\|+ \sqrt{1+b_r}\|\theta_r\|)\|u\| + (\|\hat P_r\|+ (1+b_r)\|P_r\|) \|\theta_r\|\|u\|
\leq 4\|u\|
$$
$$
\lesssim_{\gamma} \frac{\|\Sigma\|^2}{g_r^2}\biggl(\sqrt{\frac{{\bf r}(\Sigma)}{n}}\bigvee \sqrt{\frac{t}{n}}\biggr)\sqrt{\frac{t}{n}}\|u\|.
$$
Thus, we proved that with probability at least $1-e^{-t}$
\begin{equation}
\label{theta-rho}
|\langle \hat \theta_r- \sqrt{1+b_r}\theta_r,u\rangle- \rho_r(u)|
\lesssim_{\gamma} \frac{\|\Sigma\|^2}{g_r^2}\biggl(\sqrt{\frac{{\bf r}(\Sigma)}{n}}\bigvee \sqrt{\frac{t}{n}}\biggr)\sqrt{\frac{t}{n}}\|u\|.
\end{equation}
It remains to combine this with the bound (\ref{rho-L}) to complete the proof.

\end{proof}

The following result is a slightly improved version 
of Theorem 6 in \cite{KoltchinskiiLouniciPCAAHP}.

\begin{lemma}
Under conditions (\ref{cond_gamma}) and (\ref{cond_b_r}) for some $\gamma \in (0,1),$
the following bounds hold for all $t\geq 1$ with probability at least $1-e^{-t}:$
\begin{equation}
\label{on_hat_theta}
|\langle \hat \theta_r- \sqrt{1+b_r}\theta_r,u\rangle| \lesssim_{\gamma}\frac{\|\Sigma\|}{g_r}\sqrt{\frac{t}{n}}\|u\|
\end{equation}
and 
\begin{equation}
\label{theta_theta_r}
|\langle \hat \theta_r- \sqrt{1+b_r}\theta_r,\theta_r\rangle|
\lesssim_{\gamma} \frac{\|\Sigma\|^2}{g_r^2}\biggl(\sqrt{\frac{{\bf r}(\Sigma)}{n}}\bigvee \sqrt{\frac{t}{n}}\biggr)\sqrt{\frac{t}{n}}.
\end{equation}
\end{lemma}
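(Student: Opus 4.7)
The plan is to derive both bounds from the approximation inequality \eqref{theta-L} combined with the concentration estimate \eqref{conc_L_r} for the dominant linear term $\langle L_r(E)\theta_r, u\rangle$, together with the simple a priori bound $|\langle \hat\theta_r - \sqrt{1+b_r}\theta_r, u\rangle| \leq (1+\sqrt{1+b_r})\|u\| \leq 2\|u\|$ which handles the regime where $t$ is too large for the concentration bounds to be meaningful.

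First I would treat bound \eqref{theta_theta_r}, which is immediate. Taking $u=\theta_r$ in \eqref{theta-L} and using $\langle L_r(E)\theta_r,\theta_r\rangle = \langle C_r E P_r\theta_r,\theta_r\rangle + \langle P_r E C_r\theta_r,\theta_r\rangle = 0$ (since $C_r\theta_r = 0$ and $\langle C_r E\theta_r,\theta_r\rangle = \langle E\theta_r, C_r\theta_r\rangle = 0$), we obtain with probability at least $1-e^{-t}$
\begin{equation*}
|\langle \hat\theta_r - \sqrt{1+b_r}\theta_r,\theta_r\rangle| \lesssim_\gamma \frac{\|\Sigma\|^2}{g_r^2}\biggl(\sqrt{\tfrac{\mathbf{r}(\Sigma)}{n}}\bigvee \sqrt{\tfrac{t}{n}} \bigvee \tfrac{t}{n}\biggr)\sqrt{\tfrac{t}{n}}.
\end{equation*}
For $t\leq n$ the $t/n$ inside the maximum is dominated by $\sqrt{t/n}$, so the claimed bound follows. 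For $t>n$, $\sqrt{t/n}>1$, so the claimed right-hand side exceeds $\frac{\|\Sigma\|^2}{g_r^2}\geq 1 \geq |\langle \hat\theta_r-\sqrt{1+b_r}\theta_r,\theta_r\rangle|/2$, and the bound is trivial.

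Next, for bound \eqref{on_hat_theta}, I would combine \eqref{theta-L} with the concentration inequality \eqref{conc_L_r} via the triangle inequality. This yields with probability at least $1-2e^{-t}$
\begin{equation*}
|\langle \hat\theta_r - \sqrt{1+b_r}\theta_r, u\rangle| \lesssim_\gamma \sigma_r(\Sigma;u)\biggl(\sqrt{\tfrac{t}{n}}\vee\tfrac{t}{n}\biggr) + \frac{\|\Sigma\|^2}{g_r^2}\biggl(\sqrt{\tfrac{\mathbf{r}(\Sigma)}{n}}\vee\sqrt{\tfrac{t}{n}}\vee\tfrac{t}{n}\biggr)\sqrt{\tfrac{t}{n}}\|u\|.
\end{equation*}
Use $\sigma_r(\Sigma;u)\leq \frac{\|\Sigma\|}{g_r}\|u\|$ from \eqref{bd_sigma_r}, and recall that condition \eqref{cond_gamma} together with Theorem \ref{KL-Bernoulli} yields $\frac{\|\Sigma\|}{g_r}\sqrt{\mathbf{r}(\Sigma)/n}\lesssim 1$. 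In the range $t\leq n$, this allows the bracketed factor $\frac{\|\Sigma\|}{g_r}(\sqrt{\mathbf{r}(\Sigma)/n}\vee\sqrt{t/n})$ in the remainder to be bounded by a constant (multiplied by $\frac{\|\Sigma\|}{g_r}\sqrt{t/n}/\sqrt{t/n}$ arguments), giving a total bound of the form $\frac{\|\Sigma\|}{g_r}\sqrt{t/n}\|u\|$ as desired. For $t>n$, the trivial bound $2\|u\|$ suffices since $\frac{\|\Sigma\|}{g_r}\sqrt{t/n}\geq 1$.

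The main obstacle will be the bookkeeping needed to collapse the compound maximum $\sqrt{\mathbf{r}(\Sigma)/n}\vee \sqrt{t/n}\vee t/n$ and the factor $\|\Sigma\|^2/g_r^2$ from the remainder down to the cleaner $\|\Sigma\|/g_r$ form. The trick, as in the argument following \eqref{bd_Long}, is to split each summand into the product $\frac{\|\Sigma\|}{g_r}\sqrt{t/n}\cdot \frac{\|\Sigma\|}{g_r}\cdot(\text{small factor})$ and show the second factor is $O(1)$ under the standing assumption \eqref{cond_gamma}; combined with the a priori bound $2\|u\|$ to absorb the pathological tail regime $t>n$, this yields both conclusions after adjusting constants to restore probability $1-e^{-t}$.
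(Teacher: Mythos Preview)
Your approach is essentially the paper's own: combine \eqref{theta-L} with \eqref{conc_L_r} and the bound $\sigma_r(\Sigma;u)\le \frac{\|\Sigma\|}{g_r}\|u\|$, use \eqref{cond_gamma} with Theorem~\ref{KL-Bernoulli} to control $\frac{\|\Sigma\|}{g_r}\sqrt{{\bf r}(\Sigma)/n}$, and fall back on the trivial bound $2\|u\|$ in the tail regime; for \eqref{theta_theta_r}, apply \eqref{theta-L} directly with $u=\theta_r$ using $\langle L_r(E)\theta_r,\theta_r\rangle=0$.

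One small slip: your case split at $t\le n$ versus $t>n$ is not the right threshold. For $t\le n$ the factor $\frac{\|\Sigma\|}{g_r}\sqrt{t/n}$ is \emph{not} bounded by a constant in general, since $\|\Sigma\|/g_r$ can be arbitrarily large. The paper (and the argument you cite after \eqref{bd_Long}) instead splits according to whether $\frac{\|\Sigma\|}{g_r}\sqrt{t/n}\le c_\gamma$: in the ``small'' case the remainder is absorbed into $\frac{\|\Sigma\|}{g_r}\sqrt{t/n}\|u\|$, while in the ``large'' case the trivial bound $2\|u\|\lesssim_\gamma \frac{\|\Sigma\|}{g_r}\sqrt{t/n}\|u\|$ already gives the conclusion. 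With this corrected threshold your argument goes through exactly as in the paper.
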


\begin{proof}
Indeed, it follows from (\ref{theta-L}) and (\ref{conc_L_r}) that, for some constants $C,C_{\gamma}>0$ with probability at least $1-e^{-t}$
$$
|\langle \hat \theta_r- \sqrt{1+b_r}\theta_r,u\rangle| \leq C \sigma_r (\Sigma;u) \biggl(\sqrt{\frac{t}{n}}\bigvee \frac{t}{n}\biggr)
+ C_{\gamma}\frac{\|\Sigma\|^2}{g_r^2}\biggl(\sqrt{\frac{{\bf r}(\Sigma)}{n}}\bigvee \sqrt{\frac{t}{n}}\biggr)\sqrt{\frac{t}{n}}\|u\|.
$$
Since $ \sigma_r (\Sigma;u) \lesssim \frac{\|\Sigma\|}{g_r}\|u\|,$ with the same probability 
$$
|\langle \hat \theta_r- \sqrt{1+b_r}\theta_r,u\rangle| \leq C  \frac{\|\Sigma\|}{g_r}\sqrt{\frac{t}{n}}\|u\|
+ C_{\gamma}\frac{\|\Sigma\|^2}{g_r^2}\biggl(\sqrt{\frac{{\bf r}(\Sigma)}{n}}\bigvee \sqrt{\frac{t}{n}}\biggr)\sqrt{\frac{t}{n}}\|u\|.
$$
We dropped the term $\frac{t}{n}$ present in bounds (\ref{theta-L}) and (\ref{conc_L_r}) since for $t\geq n$ (the only case when it is needed), the right hand side already dominates the left hand side
(which is smaller than $2\|u\|$).  
Note that condition (\ref{cond_gamma}) and Theorem \ref{KL-Bernoulli} imply that 
$
\frac{\|\Sigma\|}{g_r} \sqrt{\frac{{\bf r}(\Sigma)}{n}}\leq c_{\gamma}
$
for some constant $c_{\gamma}>0.$
Assuming that also 
$
\frac{\|\Sigma\|}{g_r}\sqrt{\frac{t}{n}}\leq c_{\gamma},
$
which implies that $t\lesssim n,$ we obtain that for some constant $C_{\gamma}>0$ with probability at least $1-e^{-t}$ bound 
(\ref{on_hat_theta}) holds.
On the other hand, if 
$
\frac{\|\Sigma\|}{g_r}\sqrt{\frac{t}{n}}> c_{\gamma},
$
then 
$$
|\langle \hat \theta_r- \sqrt{1+b_r}\theta_r,u\rangle|\leq (\|\hat \theta_r\|+ \sqrt{1+b_r}\|\theta_r\|)\|u\| \leq 2 \|u\|
\lesssim_{\gamma} \frac{\|\Sigma\|}{g_r}\sqrt{\frac{t}{n}}\|u\|,
$$
implying again (\ref{on_hat_theta}). 
For $u=\theta_r,$ $\langle L_r(E)\theta_r,u\rangle=0$ and bound (\ref{theta-L}) implies that with probability at least $1-e^{-t}$
(\ref{theta_theta_r}) holds.
\end{proof}

The following two lemmas will be used to derive normal approximation bounds for $\langle \hat \theta_r- \sqrt{1+b_r}\theta_r,u\rangle$
from the corresponding bounds for $\langle L_r(E)\theta_r,u\rangle$
as well as to control the risk for loss functions satisfying Assumption \ref{assump_loss}.
We state them without proofs (which are elementary).

\begin{lemma} 
\label{lemma_xi_eta}
For random variables $\xi,\eta,$ denote 
$$
\Delta (\xi;\eta):= \sup_{x\in {\mathbb R}}|{\mathbb P}\{\xi \leq x\}-{\mathbb P}\{\eta\leq x\}|
$$ 
and 
$$
\delta (\xi;\eta):= \inf\{\delta>0: {\mathbb P}\{|\xi-\eta|\geq \delta\}+\delta\}.
$$
Then, for a standard normal r.v. $Z,$
$$
\Delta (\xi;Z)\leq \Delta (\eta;Z)+\delta(\xi;\eta).
$$
Under Assumption \ref{assump_loss}, for all $A>0$
$$
|{\mathbb E}\ell(\xi)-{\mathbb E}\ell(\eta)|\leq 4\ell(A)\Delta(\xi;\eta)+ {\mathbb E}\ell(\xi)I(|\xi|\geq A)+ {\mathbb E}\ell(\eta)I(|\eta|\geq A).
$$
\end{lemma}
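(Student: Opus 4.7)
The plan is to prove the two assertions of Lemma \ref{lemma_xi_eta} separately by elementary means: the first via a coupling-style inclusion of events together with the Lipschitz property of $\Phi$, and the second via a layer-cake comparison of $\mathbb{E}\ell(\xi)$ and $\mathbb{E}\ell(\eta)$ after truncation at level $A$.

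For the first assertion, I would read $\delta(\xi;\eta)$ as the Ky-Fan-type quantity (the statement appears to have a typographical omission). Fixing an arbitrary $\delta>0$ admissible in the infimum, the inclusion $\{\xi\leq x\}\subset\{\eta\leq x+\delta\}\cup\{|\xi-\eta|>\delta\}$ yields
\[
\mathbb{P}\{\xi\leq x\}\leq\Phi(x+\delta)+\Delta(\eta;Z)+\mathbb{P}\{|\xi-\eta|\geq\delta\}\leq\Phi(x)+\Delta(\eta;Z)+\delta+\mathbb{P}\{|\xi-\eta|\geq\delta\},
\]
since $\Phi$ is Lipschitz with constant $1/\sqrt{2\pi}\leq 1$. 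A symmetric argument starting from $\{\eta\leq x-\delta\}\subset\{\xi\leq x\}\cup\{|\xi-\eta|>\delta\}$ gives the matching lower bound. Taking the supremum over $x$ and then the infimum over admissible $\delta$ produces the stated inequality.

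For the second assertion, I would first truncate by writing
\[
|\mathbb{E}\ell(\xi)-\mathbb{E}\ell(\eta)|\leq|\mathbb{E}f(\xi)-\mathbb{E}f(\eta)|+\mathbb{E}\ell(\xi)I(|\xi|\geq A)+\mathbb{E}\ell(\eta)I(|\eta|\geq A),
\]
with $f(x):=\ell(x)I(|x|\leq A)\in[0,\ell(A)]$, and focus on bounding the leading term by $4\ell(A)\Delta(\xi;\eta)$. By the layer-cake formula, $\mathbb{E}f(\xi)=\int_0^{\ell(A)}\mathbb{P}\{f(\xi)>t\}\,dt$ and likewise for $\eta$. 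Because $\ell$ is even, vanishes at $0$, and is nondecreasing on $\mathbb{R}_+$, for each $t\in(0,\ell(A))$ the set $\{f>t\}$ is contained in a union of at most two intervals inside $[-A,A]$. Since $|\mathbb{P}\{\xi\in I\}-\mathbb{P}\{\eta\in I\}|\leq 2\Delta(\xi;\eta)$ for any interval $I$, the integrand is bounded by $4\Delta(\xi;\eta)$, and integrating over $(0,\ell(A))$ gives the claim.

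No real obstacle arises; both arguments are routine. The only point demanding a moment's attention is the interval-count in the layer-cake step, where one uses the evenness, monotonicity on $\mathbb{R}_+$, and vanishing of $\ell$ at $0$ to see that each level set is a union of at most two intervals, producing the constant $4$ rather than $2$. An equally valid alternative for the second part is Lebesgue--Stieltjes integration by parts against the signed function $F_\xi-F_\eta$ (whose boundary values vanish), which reduces the estimate to computing directly the total variation $V(f)=4\ell(A)$.
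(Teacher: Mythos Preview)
Your proof is correct. Note that the paper itself does not give a proof of this lemma: it explicitly says ``We state them without proofs (which are elementary).'' Your argument is exactly the kind of elementary proof the authors have in mind, and both parts are handled cleanly.

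A couple of minor remarks. First, your reading of $\delta(\xi;\eta)$ is the right one: from how the quantity is used later in the paper (bounding it by $\inf_{t\geq 1}\{\text{(deviation bound at level }t)+e^{-t}\}$), the intended definition is $\delta(\xi;\eta)=\inf_{\delta>0}\bigl[\mathbb{P}\{|\xi-\eta|\geq\delta\}+\delta\bigr]$, and your coupling argument with the $1$-Lipschitz bound on $\Phi$ goes through verbatim for this. Second, in the layer-cake step you should be (and implicitly are) using that $\Delta(\xi;\eta)$ also controls differences of left limits of the CDFs, so that half-open or closed level intervals cause no trouble; this is immediate by taking monotone limits in $x$. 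Your alternative via integration by parts against $F_\xi-F_\eta$ with total variation $V(f)=4\ell(A)$ is equally valid and arguably slicker.
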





\begin{lemma}
\label{Lemma:Bernstein-loss}
Let $\xi$ be a random variable such that for some $\tau_1\geq 0$ and $\tau_2\geq 0$
and for all $t\geq 1$ with probability at least $1-e^{-t}$
$$
|\xi| \leq \tau_1 \sqrt{t} \vee \tau_2 t.
$$
Let $\ell$ be a loss function satisfying Assumption \ref{assump_loss}.
If $2c_2 \tau_2<1,$ then 
\begin{equation}
\label{Bernstein-loss}
{\mathbb E} \ell^2 (\xi)\leq 
2e\sqrt{2\pi}c_1^2 e^{2c_2^2 \tau_1^2}+ \frac{e c_1^2}{1-2c_2 \tau_2}.
\end{equation}
\end{lemma}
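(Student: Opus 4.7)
The proof splits naturally into a reduction from $\mathbb{E}\ell^2(\xi)$ to the exponential moment $\mathbb{E} e^{2c_2|\xi|}$, followed by a tail-integration argument.

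First, the symmetry and exponential-growth conditions in Assumption \ref{assump_loss} give $\ell(\xi)=\ell(|\xi|)\leq c_1 e^{c_2|\xi|}$, so $\ell^2(\xi)\leq c_1^2 e^{2c_2|\xi|}$. It therefore suffices to prove
\[
\mathbb{E} e^{2c_2|\xi|}\leq 2e\sqrt{2\pi}\, e^{2c_2^2\tau_1^2}+\frac{e}{1-2c_2\tau_2}.
\]

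Second, I would convert the deviation inequality into a pointwise tail bound in $s=|\xi|$. The envelope $t\mapsto \tau_1\sqrt{t}\vee \tau_2 t$ is continuous and strictly increasing, and for any $s\geq 0$ the choice $t=\min(s^2/\tau_1^2,s/\tau_2)$ satisfies $\tau_1\sqrt{t}\vee\tau_2 t\leq s$. Hence the hypothesis yields, for $s\geq \tau_1\vee\tau_2$ (equivalently $t\geq 1$),
\[
\mathbb{P}\{|\xi|>s\}\leq e^{-\min(s^2/\tau_1^2,\,s/\tau_2)}\leq e^{-s^2/\tau_1^2}+e^{-s/\tau_2}.
\]
For $s<\tau_1\vee\tau_2$ at least one of $s^2/\tau_1^2,\,s/\tau_2$ is less than $1$, so the right-hand side of the displayed bound, multiplied by $e$, already dominates the trivial bound $1$. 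Both regimes can therefore be combined into the uniform estimate $\mathbb{P}\{|\xi|>s\}\leq e(e^{-s^2/\tau_1^2}+e^{-s/\tau_2})$ for all $s\geq 0$.

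Third, apply the layer-cake identity $\mathbb{E} e^{2c_2|\xi|}=1+2c_2\int_0^\infty e^{2c_2 s}\mathbb{P}\{|\xi|>s\}\,ds$ and substitute the tail bound. This produces a sub-Gaussian integral, handled by completing the square $2c_2 s-s^2/\tau_1^2=c_2^2\tau_1^2-(s/\tau_1-c_2\tau_1)^2$ and bounded by $\sqrt{\pi}\,\tau_1 e^{c_2^2\tau_1^2}$, and a sub-exponential integral equal to $\tau_2/(1-2c_2\tau_2)$ whose convergence is exactly what the assumption $2c_2\tau_2<1$ ensures. Elementary bookkeeping—$2c_2\tau_1\leq 1+c_2^2\tau_1^2\leq e^{c_2^2\tau_1^2}$ absorbs the prefactor $2c_2\tau_1 e^{c_2^2\tau_1^2}$ into $e^{2c_2^2\tau_1^2}$, and $1+\frac{2ec_2\tau_2}{1-2c_2\tau_2}\leq \frac{e}{1-2c_2\tau_2}$ because $2c_2\tau_2(e-1)\leq e-1$—then collapses the expression into the stated form (up to slack in the leading constant $2e\sqrt{2\pi}$).

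The conceptual crux is the tail-conversion step, because the deviation inequality couples the sub-Gaussian scale $\tau_1\sqrt{t}$ with the sub-exponential scale $\tau_2 t$ inside a single maximum; one must invert the envelope, identify the crossover, and neutralize the regime $s<\tau_1\vee\tau_2$ where the quantile bound does not apply, at the cost of a harmless multiplicative factor $e$. All subsequent manipulations are routine Gaussian-plus-geometric integration combined with a few elementary inequalities.
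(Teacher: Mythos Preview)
Your proof is correct. The paper omits the proof of this lemma entirely, stating only that it is elementary (see the sentence preceding Lemma~\ref{lemma_xi_eta}: ``We state them without proofs (which are elementary)''). Your approach---bounding $\ell^2(\xi)\le c_1^2e^{2c_2|\xi|}$, inverting the envelope $\tau_1\sqrt t\vee\tau_2 t$ to convert the deviation hypothesis into the tail bound $\mathbb P\{|\xi|>s\}\le e(e^{-s^2/\tau_1^2}+e^{-s/\tau_2})$, and then applying the layer-cake formula---is the natural one and delivers the claimed bound (indeed with slack in the constant, as you note: your sub-Gaussian term gives $e\sqrt{\pi}\,e^{2c_2^2\tau_1^2}$ rather than the larger $2e\sqrt{2\pi}\,e^{2c_2^2\tau_1^2}$). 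The only cosmetic point worth a remark is the degenerate cases $\tau_1=0$ or $\tau_2=0$, where the corresponding term simply drops out; your argument handles these implicitly.
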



Next we prove the normal approximation bounds for linear forms $\langle \hat \theta_r- \sqrt{1+b_r}\theta_r,u\rangle.$

\begin{lemma}
\label{th:normal}
Suppose that conditions (\ref{cond_gamma}) and (\ref{cond_b_r}) hold for some $\gamma\in (0,1)$
and also that $n\geq 2{\bf r}(\Sigma).$
Assume that, for some $u\in {\mathbb H},$ $\sigma_r(\Sigma;u)>0.$ 
Let $\alpha\geq 1.$ Then the following bound holds: 
for some constants $C,C_{\gamma, \alpha}>0,$
\begin{align}
\label{norm_approx_A}
&
\nonumber
\sup_{x\in {\mathbb R}}\bigl|{\mathbb P}\bigl\{\frac{\sqrt{n}\langle \hat \theta_r- \sqrt{1+b_r}\theta_r,u\rangle}{\sigma_r(\Sigma;u)}\leq x\bigr\}-\Phi(x)\bigr|
\\
\leq  & C n^{-1/2} + \frac{C_{\gamma,\alpha}}{\sigma_r(\Sigma;u)}
\frac{\|\Sigma\|^2}{g_r^2}\biggl(\sqrt{\frac{{\bf r}(\Sigma)}{n} \log \frac{n}{{\bf r}(\Sigma)}}\bigvee \frac{\log \frac{n}{{\bf r}(\Sigma)}}{\sqrt{n}}\biggr)\|u\|+ \biggl(\frac{{\bf r}(\Sigma)}{n}\biggr)^{\alpha}.
\end{align}
Moreover, under Assumption \ref{assump_loss} on the loss $\ell,$ there exist constants $C, C_{\gamma}, C_{\gamma,\alpha}>0$ such that  
\begin{align}
\label{bdell}
&
\nonumber
\bigl|{\mathbb E} \ell\biggl( \frac{\sqrt{n}\langle \hat \theta_r- \sqrt{1+b_r}\theta_r,u\rangle}{\sigma_r(\Sigma;u)}\biggr)-{\mathbb E}\ell(Z)\bigr|
\\
&
\nonumber
\leq c_1 e^{c_2 A}
 \biggl(C n^{-1/2} + \frac{C_{\gamma,\alpha}}{\sigma_r(\Sigma;u)}
\frac{\|\Sigma\|^2}{g_r^2}\biggl(\sqrt{\frac{{\bf r}(\Sigma)}{n} \log \frac{n}{{\bf r}(\Sigma)}}\bigvee \frac{\log \frac{n}{{\bf r}(\Sigma)}}{\sqrt{n}}\biggr)\|u\|+ \biggl(\frac{{\bf r}(\Sigma)}{n}\biggr)^{\alpha}\biggr)
\\
&
+
2 e^{3/2} (2\pi)^{1/4} c_1 
e^{c_2^2 \tau^2}e^{-A^2/2\tau^2}
+ c_1 e^{c_2^2} e^{-A^2/4},
\end{align}
where 
$$
\tau:= C_{\gamma} \frac{\|\Sigma\| \|u\|}{g_r \sigma_r(\Sigma;u)}.
$$ 
\end{lemma}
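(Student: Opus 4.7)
The plan is to approximate the normalized linear form $\xi := \sqrt{n}\langle \hat \theta_r - \sqrt{1+b_r}\theta_r, u\rangle/\sigma_r(\Sigma;u)$ by its linearization $\eta := \sqrt{n}\langle L_r(E)\theta_r, u\rangle/\sigma_r(\Sigma;u)$, whose Berry--Esseen bound is already proved in Lemma \ref{Lemma:conc_L_r}, and to transfer both the normal and the loss-functional approximation via the elementary Lemma \ref{lemma_xi_eta}.

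First, by the first part of Lemma \ref{lemma_xi_eta},
$$
\Delta(\xi;Z) \leq \Delta(\eta;Z) + \delta(\xi;\eta).
$$
Bound (\ref{normal_approx_L_r}) supplies $\Delta(\eta;Z) \leq C n^{-1/2}$, accounting for the first term in (\ref{norm_approx_A}). To control $\delta(\xi;\eta)$, apply (\ref{theta-L}) with $t := \alpha \log(n/{\bf r}(\Sigma))$; under $n \geq 2{\bf r}(\Sigma)$ this $t$ is bounded below by $\alpha \log 2$ and bounded above by $n$, so the $t/n$ branch inside the max in (\ref{theta-L}) is dominated by $\sqrt{t/n}$. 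Substituting this choice of $t$ and multiplying by $\sqrt{n}/\sigma_r(\Sigma;u)$ gives, with probability at least $1 - e^{-t} = 1 - ({\bf r}(\Sigma)/n)^\alpha$,
$$
|\xi - \eta| \lesssim_{\gamma,\alpha} \frac{1}{\sigma_r(\Sigma;u)} \frac{\|\Sigma\|^2}{g_r^2}\biggl(\sqrt{\frac{{\bf r}(\Sigma)\log(n/{\bf r}(\Sigma))}{n}} \bigvee \frac{\log(n/{\bf r}(\Sigma))}{\sqrt{n}}\biggr)\|u\|,
$$
which together with the residual probability $({\bf r}(\Sigma)/n)^\alpha$ produces the remaining two terms of (\ref{norm_approx_A}) through the definition of $\delta(\xi;\eta)$.

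For (\ref{bdell}), apply the second part of Lemma \ref{lemma_xi_eta} with $Z$ in place of $\eta$:
$$
|{\mathbb E}\ell(\xi) - {\mathbb E}\ell(Z)| \leq 4\ell(A)\Delta(\xi;Z) + {\mathbb E}\ell(\xi)I(|\xi|\geq A) + {\mathbb E}\ell(Z)I(|Z|\geq A).
$$
The first summand is bounded by the Kolmogorov estimate (\ref{norm_approx_A}) multiplied by $4\ell(A) \leq 4 c_1 e^{c_2 A}$ from Assumption \ref{assump_loss}, producing the first block of terms in the target inequality. For the $\xi$-tail, bound (\ref{on_hat_theta}) rescaled by $1/\sigma_r(\Sigma;u)$ gives $|\xi| \lesssim_{\gamma} \tau \sqrt{t}$ with probability at least $1 - e^{-t}$, where $\tau := C_\gamma \|\Sigma\|\|u\|/(g_r\sigma_r(\Sigma;u))$; that is, $\xi$ is sub-Gaussian at scale $\tau$. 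Invoking Lemma \ref{Lemma:Bernstein-loss} with $\tau_1 = \tau$, $\tau_2 = 0$ yields ${\mathbb E}\ell^2(\xi) \lesssim c_1^2 e^{2c_2^2\tau^2}$ and ${\mathbb P}(|\xi|\geq A) \leq e^{-A^2/\tau^2}$; Cauchy--Schwarz then produces the term $2 e^{3/2}(2\pi)^{1/4} c_1 e^{c_2^2\tau^2} e^{-A^2/(2\tau^2)}$. The standard Gaussian tail of $Z$, treated analogously via ${\mathbb E}\ell^2(Z)\leq c_1^2 e^{2c_2^2}$ and ${\mathbb P}(|Z|\geq A)\leq e^{-A^2/2}$, gives the remaining term $c_1 e^{c_2^2} e^{-A^2/4}$.

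The principal obstacle is the arithmetic in Step 2: one must carefully reconcile the factor $\sqrt{n}/\sigma_r(\Sigma;u)$ with the three-way maximum in (\ref{theta-L}) to extract precisely the form $\sqrt{{\bf r}(\Sigma)\log(n/{\bf r}(\Sigma))/n} \vee \log(n/{\bf r}(\Sigma))/\sqrt{n}$, verifying that the $\sqrt{t/n}$ branch is selected in the intended regime. A related subtlety in Step 3 is identifying the correct sub-Gaussian scale $\tau$ for $\xi$: it arises by dividing the right-hand side of (\ref{on_hat_theta}) by $\sigma_r(\Sigma;u)$, and is nontrivially finite only because we assume $\sigma_r(\Sigma;u) > 0$.
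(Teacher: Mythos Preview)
Your proposal is correct and follows essentially the same approach as the paper: you use Lemma \ref{lemma_xi_eta} with $\xi$ the target and $\eta$ its linearization, control $\Delta(\eta;Z)$ by the Berry--Esseen bound \eqref{normal_approx_L_r}, choose $t=\alpha\log(n/{\bf r}(\Sigma))$ in \eqref{theta-L} to bound $\delta(\xi;\eta)$, and for the loss bound combine the sub-Gaussian tail from \eqref{on_hat_theta} with Lemma \ref{Lemma:Bernstein-loss} and Cauchy--Schwarz. The only cosmetic point is that $t=\alpha\log(n/{\bf r}(\Sigma))$ need not exceed $1$ when $n$ is close to $2{\bf r}(\Sigma)$ and $\alpha=1$, but since $t\geq \log 2$ is bounded away from zero this is absorbed into the constants, as the paper also does.
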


\begin{proof}
We will use the first claim of Lemma \ref{lemma_xi_eta} with 
$$
\xi := \frac{\sqrt{n}\langle \hat \theta_r- \sqrt{1+b_r}\theta_r,u\rangle}{\sigma_r(\Sigma;u)}\ {\rm and}\ \eta:= \frac{\sqrt{n}\langle L_r(E)\theta_r,u\rangle}{\sigma_r(\Sigma;u)}.
$$
It follows from bound (\ref{theta-L}) that, under conditions (\ref{cond_gamma}) and (\ref{cond_b_r}),  
for some $C_{\gamma}>0$ 
$$
\delta(\xi;\eta)
\leq \inf_{t\geq 1}\biggl\{\frac{C_{\gamma}}{\sigma_r(\Sigma;u)}
\frac{\|\Sigma\|^2}{g_r^2}\biggl(\sqrt{\frac{{\bf r}(\Sigma)}{n}}\bigvee \sqrt{\frac{t}{n}}\bigvee \frac{t}{n}\biggr)\sqrt{t}\|u\|+ e^{-t}\biggr\}.
$$
Taking $t:= \alpha\log \big ( \frac{n}{{\bf r}(\Sigma)} \big )$ with some $\alpha\geq 1$ easily yields an upper bound 
$$
\delta (\xi;\eta)\leq \frac{C_{\gamma,\alpha}}{\sigma_r(\Sigma;u)}
\frac{\|\Sigma\|^2}{g_r^2}\biggl(\sqrt{\frac{{\bf r}(\Sigma)}{n} \log \frac{n}{{\bf r}(\Sigma)}}\bigvee \frac{\log \frac{n}{{\bf r}(\Sigma)}}{\sqrt{n}}\biggr)\|u\|+ \biggl(\frac{{\bf r}(\Sigma)}{n}\biggr)^{\alpha}.
$$
Using bound (\ref{normal_approx_L_r}) to control $\Delta(\eta;Z),$ we obtain from Lemma \ref{lemma_xi_eta} that bound \eqref{norm_approx_A} holds with some constants $C, C_{\gamma,\alpha}>0.$
To prove the second statement, we use the second bound of Lemma \ref{lemma_xi_eta} with the random variable $\xi := \frac{\sqrt{n}\langle \hat \theta_r- \sqrt{1+b_r}\theta_r,u\rangle}{\sigma_r(\Sigma;u)}$ and $\eta=Z.$
The following exponential bound on $\xi$ is an easy corollary of bound
(\ref{on_hat_theta}): 
for some constant $C_{\gamma}>0$ and for all $t\geq 1$ with probability at least 
$1-e^{-t}$
\begin{equation}
\label{on_xi}
|\xi|\leq C_{\gamma}\frac{\|\Sigma\|}{g_r \sigma_r(\Sigma;u)}\sqrt{t}\|u\|
=\tau \sqrt{t}.
\end{equation}
Using bound \eqref{Bernstein-loss} with $\tau_1=\tau$ and $\tau_2=0,$ we 
obtain 
$$
{\mathbb E} \ell^2 (\xi)\leq 
2e\sqrt{2\pi}c_1^2 e^{2c_2^2 \tau_1^2}+ e c_1^2\leq 4e\sqrt{2\pi}c_1^2 e^{2c_2^2 \tau_1^2}
$$
Therefore, 
$$
{\mathbb E} \ell(\xi)I(|\xi|\geq A)\leq  {\mathbb E}^{1/2}\ell^2 (\xi) {\mathbb P}^{1/2}\{|\xi|\geq A\}\leq 
2 e^{3/2} (2\pi)^{1/4} c_1 
e^{c_2^2 \tau^2}e^{-A^2/2\tau^2}.
$$
We also have 
$$
{\mathbb E} \ell(Z)I(|Z|\geq A)\leq c_1 e^{c_2^2} e^{-A^2/4}.  
$$
Using bound (\ref{norm_approx_A}), we can now deduce bound \eqref{bdell} from 
the second statement of Lemma \ref{lemma_xi_eta}.
\end{proof}

Lemma \ref{th:normal} immediately implies Theorem \ref{norm_approx}
(by passing to the limit as $n\to\infty$ in \eqref{norm_approx_A} and as $n\to\infty$ and then 
$A\to\infty$ in \eqref{bdell}).

\subsection{{ \bf Proof of Proposition \ref{no_efficiency}}}

Denote 
$$
A_r(\Sigma):= 2\trace(P_r\Sigma P_r)\trace(C_r\Sigma C_r)= 
2\sum_{s\neq r}\frac{\mu_r \mu_s m_s}{(\mu_r-\mu_s)^2}.
$$
It was shown in \cite{KoltchinskiiLouniciPCAAOS} that 
$$
{\mathbb E}\|L_r(E)\|_2^2= \frac{A_r(\Sigma)}{n},
$$
where $E=\hat \Sigma-\Sigma.$
Note that 
\begin{equation}
\label{bd_Ar_up}
\frac{A_r(\Sigma)}{2} \leq \frac{\mu_r}{g_r^2} (\trace(\Sigma)-\mu_r)
\leq \frac{\|\Sigma\|^2}{g_r^2} {\bf r}(\Sigma)
\end{equation}
and 
\begin{equation}
\label{bd_Ar_down}
\frac{A_r(\Sigma)}{2} \geq \frac{\mu_1 \mu_r}{(\mu_1-\mu_r)^2 \vee \mu_r^2} 
({\bf r}(\Sigma)-1).
\end{equation}

\begin{lemma}
\label{brbou}
The following representation holds:
$$
b_r(\Sigma)= -\frac{1}{2}\frac{A_r(\Sigma)}{n} + \beta_r,
$$ 
where 
$$
|\beta_r| \lesssim \frac{\|\Sigma\|^3 }{g_r^3} 
\biggl(\sqrt{\frac{{\bf r}(\Sigma)}{n}} \bigvee 
\frac{{\bf r}(\Sigma)}{n}\biggr)^3.
$$
\end{lemma}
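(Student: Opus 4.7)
The plan is to reduce the computation of $b_r$ to a Gaussian moment calculation via the rank-one identity. Since $\|\hat\theta_r\|^2 = \langle\hat\theta_r, \theta_r\rangle^2 + \|P_r^\perp\hat\theta_r\|^2 = 1$, I immediately have
\[
b_r = \mathbb{E}\langle\hat\theta_r, \theta_r\rangle^2 - 1 = -\mathbb{E}\|P_r^\perp\hat\theta_r\|^2.
\]
The task is then to identify the leading term of $\mathbb{E}\|P_r^\perp\hat\theta_r\|^2$ as $A_r(\Sigma)/(2n)$ with a remainder of the stated third-order size. Applying $P_r^\perp$ on the left and $\theta_r$ on the right of the first-order expansion $\hat P_r = P_r + L_r(E) + S_r(E)$, and using $P_r^\perp\theta_r=0$, $P_r^\perp L_r(E)\theta_r = C_r E\theta_r$ (since $C_r\theta_r=0$), and $\hat P_r\theta_r = \langle\hat\theta_r,\theta_r\rangle\hat\theta_r$, I obtain the identity
\[
\langle\hat\theta_r, \theta_r\rangle\, P_r^\perp\hat\theta_r = C_r E\theta_r + P_r^\perp S_r(E)\theta_r.
\]
Taking squared norms, substituting $\langle\hat\theta_r, \theta_r\rangle^2 = 1 - \|P_r^\perp\hat\theta_r\|^2$, and then expectations yields the exact decomposition
\[
-b_r = \mathbb{E}\|C_r E\theta_r\|^2 + \mathbb{E}\|P_r^\perp\hat\theta_r\|^4 + 2\,\mathbb{E}\langle C_r E\theta_r, P_r^\perp S_r(E)\theta_r\rangle + \mathbb{E}\|P_r^\perp S_r(E)\theta_r\|^2.
\]

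Next I would evaluate the leading term. Decomposing the Gaussian vector as $X = Y\theta_r + X^\perp$ with $Y := \langle X, \theta_r\rangle \sim \mathcal N(0,\mu_r)$ independent of $X^\perp := P_r^\perp X$ (covariance $\Sigma - \mu_r P_r$), and using $C_r\theta_r = 0$, the quantity $C_r E\theta_r$ collapses to $n^{-1}\sum_j Y_j C_r X_j^\perp$. Independence of $Y_j$ and $X_j^\perp$, together with $C_r P_r = 0$ (so $\trace(C_r\,\mathrm{Cov}(X^\perp)\,C_r) = \trace(C_r\Sigma C_r)$), gives
\[
\mathbb{E}\|C_r E\theta_r\|^2 = \frac{\mu_r\,\trace(C_r\Sigma C_r)}{n} = \frac{A_r(\Sigma)}{2n}.
\]

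The three remainder terms would then be controlled via the standard Koltchinskii-Lounici perturbation estimates. On the high-probability event $\{\|E\| \leq c g_r\}$ (for small $c$), the Neumann series for the resolvent converges and delivers $\|P_r^\perp\hat\theta_r\| \lesssim \|E\|/g_r$ and $\|S_r(E)\theta_r\| \lesssim \|E\|^2/g_r^2$; on the complement, the trivial bounds $\|P_r^\perp\hat\theta_r\|\leq 1$ and $\|S_r(E)\|\leq 2$ combined with the exponential concentration \eqref{exp_bd_Sigma} make the contribution negligible. Using the moment inequalities \eqref{hatSigmap} for $\|E\|^3$ and $\|E\|^4$, the cross term gives the dominant third-order contribution $\|\Sigma\|^3 g_r^{-3}(\sqrt{{\bf r}(\Sigma)/n}\vee{\bf r}(\Sigma)/n)^3$, and in the perturbative regime $(\|\Sigma\|/g_r)(\sqrt{{\bf r}(\Sigma)/n}\vee{\bf r}(\Sigma)/n) \lesssim 1$ the two fourth-order terms are bounded by the same quantity.

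The main obstacle is that $\mathbb{E}\|P_r^\perp\hat\theta_r\|^4$ looks naively like a fourth-order object, which could exceed the third-order target. Two observations save it: in the perturbative regime it is at most the third-order target multiplied by the factor $(\|\Sigma\|/g_r)(\sqrt{{\bf r}(\Sigma)/n}\vee{\bf r}(\Sigma)/n) \lesssim 1$; in the complementary non-perturbative regime the target itself is bounded below by an absolute constant, so the crude estimates $|b_r|\leq 1$ and $A_r(\Sigma)/n \lesssim \|\Sigma\|^2 g_r^{-2}\,{\bf r}(\Sigma)/n$ (from \eqref{bd_Ar_up}) immediately deliver the claim on $\beta_r = b_r + A_r(\Sigma)/(2n)$.
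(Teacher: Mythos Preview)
Your argument is correct, but it takes a genuinely different route from the paper. The paper never passes through the rank-one identity $b_r=-\mathbb E\|P_r^\perp\hat\theta_r\|^2$. Instead it works directly with $b_r=\trace(P_r\,\mathbb E S_r(E)\,P_r)$ and expands $S_r(E)$ one step further in the perturbation series, writing $S_r(E)$ as an explicit sum of six second-order terms plus a remainder $S_r^{(3)}(E)$ with $\|S_r^{(3)}(E)\|\lesssim \|E\|^3/g_r^3$. Because $P_rC_r=0$, five of the six second-order terms are annihilated by $P_r(\cdot)P_r$, leaving $P_rS_r(E)P_r=-P_rEC_r^2EP_r+P_rS_r^{(3)}(E)P_r$. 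Taking traces gives $b_r=-\mathbb E\|P_rEC_r\|_2^2+\beta_r$ with $\beta_r=\mathbb E\trace(P_rS_r^{(3)}(E)P_r)$, and the bound on $\beta_r$ is then a one-line application of $\mathbb E\|E\|^3\lesssim \|\Sigma\|^3(\sqrt{{\bf r}(\Sigma)/n}\vee {\bf r}(\Sigma)/n)^3$. Note that $\|P_rEC_r\|_2^2=\|C_rE\theta_r\|^2$, so the leading terms in the two approaches coincide exactly.

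The trade-off: your decomposition is more elementary (no need to quote the explicit second-order perturbation formula from \cite{KoltchinskiiLouniciPCAarxiv}), but it produces two additional fourth-order remainders, $\mathbb E\|P_r^\perp\hat\theta_r\|^4$ and $\mathbb E\|P_r^\perp S_r(E)\theta_r\|^2$, which you then have to reduce to third order via the case split between the perturbative regime $(\|\Sigma\|/g_r)(\sqrt{{\bf r}(\Sigma)/n}\vee{\bf r}(\Sigma)/n)\lesssim 1$ and its complement. The paper's route avoids this entirely: by pushing the expansion to third order from the start, the remainder is already of the right size and no regime splitting is needed.
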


\begin{proof}
Recall representation \eqref{bias_P_r} and bound \eqref{bd_T_r}. 
Note that 
$$
b_r = \trace (P_r({\mathbb E}\hat P_r-P_r)P_r)
$$
and 
$$
{\mathbb E}\hat P_r -P_r = {\mathbb E}S_r(E).
$$
We will use the following representation for $S_r(E)$ (based on perturbation series 
for $\hat P_r$) that 
easily follows from Lemma 4 in \cite{KoltchinskiiLouniciPCAarxiv}:
$$
S_r(E) = P_r E C_r E C_r + C_r E P_r E C_r + C_r E C_r E P_r 
$$
$$
- P_r E P_r E C_r^2 - P_r E C_r^2 E P_r - C_r^2 E P_r E P_r
+  S_r^{(3)}(E),
$$
where 
$$
\|S_r^{(3)}(E)\| \lesssim \frac{\|E\|^3}{g_r^3}.
$$
Since $P_r C_r = C_r P_r =0$ this implies 
$$
P_r S_r(E) P_r = -P_r E C_r^2 E P_r + P_r S_r^{(3)}(E) P_r.
$$
Therefore we obtain 
$$
b_r = {\mathbb E}\trace (P_r S_r(E)P_r) = 
- {\mathbb E} \trace (P_r E C_r^2 E P_r)+ 
{\mathbb E} \trace (P_r S_r^{(3)}(E) P_r)
$$
$$
= - {\mathbb E}\|P_r E C_r\|_2^2 +{\mathbb E} \trace (P_r S_r^{(3)}(E) P_r)
= - \frac{1}{2}{\mathbb E} \|P_r E C_r + C_r E P_r\|_2^2 
+ {\mathbb E} \trace (P_r S_r^{(3)}(E) P_r) 
$$
$$
- \frac{1}{2}{\mathbb E}\|L_r(E)\|_2^2 + {\mathbb E} \trace (P_r S_r^{(3)}(E) P_r)
= - \frac{1}{2} \frac{A_r(\Sigma)}{n} + {\mathbb E} \trace (P_r S_r^{(3)}(E) P_r).
$$
Thus, $\beta_r = {\mathbb E} \trace (P_r S_r^{(3)}(E) P_r)$ and, using 
bound \eqref{hatSigmap}, we get
$$
|\beta_r|\leq {\mathbb E}\|S_r^{(3)}(E)\| \|P_r\|_1
\leq  {\mathbb E}\|S_r^{(3)}(E)\|
\lesssim \frac{{\mathbb E}\|E\|^3}{g_r^3}
$$
$$
\lesssim \frac{\|\Sigma\|^3}{g_r^3} \biggl(\sqrt{\frac{{\bf r}(\Sigma)}{n}} \bigvee 
\frac{{\bf r}(\Sigma)}{n}\biggr)^3,
$$
which completes the proof.
\end{proof}

It follows from the lower bound \eqref{bd_Ar_down} on $\frac{A_r(\Sigma)}{2}$ and the bound of Lemma 
\ref{brbou} that, under the assumption ${\bf r}(\Sigma)\leq n,$ with some constant $C>0$
\begin{equation}
\label{b_r_lower}
|b_r| \geq \frac{\mu_1\mu_r}{(\mu_1-\mu_r)^2 \vee \mu_r^2} 
\frac{{\bf r}(\Sigma)-1}{n}- C\frac{\|\Sigma\|^3 }{g_r^3} \biggl(\frac{{\bf r}(\Sigma)}{n}\biggr)^{3/2}.
\end{equation}
Next note that 
$$
|\langle \hat \theta_r -\theta_r, u\rangle| 
\geq |\sqrt{1+b_r}-1|\langle \theta_r,u\rangle|
- |\langle \hat \theta_r -\sqrt{1+b_r}\theta_r, u\rangle|
$$
$$
\geq 
\frac{|b_r|}{1+\sqrt{1+b_r}}|\langle \theta_r,u\rangle|
- |\langle \hat \theta_r -\sqrt{1+b_r}\theta_r, u\rangle|
$$
$$
\geq 
\frac{|b_r|}{2}|\langle \theta_r,u\rangle|
- |\langle \hat \theta_r -\sqrt{1+b_r}\theta_r, u\rangle|.
$$
Using bounds \eqref{on_hat_theta} and \eqref{b_r_lower}, we obtain
that for all $t\geq 1$ with probability at least $1-e^{-t}$
\begin{equation}
\label{chi-chi}
|\langle \hat \theta_r -\theta_r, u\rangle| \geq 
\frac{1}{2}|\langle \theta_r,u\rangle|
\biggl(\frac{\mu_1\mu_r}{(\mu_1-\mu_r)^2 \vee \mu_r^2} 
\frac{{\bf r}(\Sigma)-1}{n}- C\frac{\|\Sigma\|^3 }{g_r^3} 
\biggl(\frac{{\bf r}(\Sigma)}{n}\biggr)^{3/2}\biggr)- 
C_{\gamma}  \frac{\|\Sigma\|}{g_r}\sqrt{\frac{t}{n}}\|u\|.
\end{equation}
We will show that there exists a covariance
$\Sigma_0\in {\mathcal S}^{(r)}({\frak r}_n, a, \sigma_0, u)$ such that 
$|\langle \theta_r(\Sigma_0),u\rangle|\geq \frac{\|u\|}{2},$
$$
\frac{\mu_1(\Sigma_0)\mu_r(\Sigma_0)}{(\mu_1(\Sigma_0)-\mu_r(\Sigma_0))^2 \vee \mu_r^2(\Sigma_0)}\geq c_1
$$
for some constant $c_1>0$ that might depend on $r, a, \sigma_0$
and ${\bf r}(\Sigma_0)-1 \geq \frak{r}_n/2.$ Assuming that such a $\Sigma_0$
exists, we choose $t_n\to \infty,$
$t_n= o(\frac{{\frak r}_n^2}{n})$ and applying bound 
\eqref{chi-chi} to $\Sigma=\Sigma_0,$
we immediately obtain that 
$$
\sup_{\Sigma\in {\mathcal S}^{(r)}({\frak r}_n, a, \sigma_0, u)}
{\mathbb P}_{\Sigma}\bigl\{|\langle \hat \theta_r -\theta_r(\Sigma), u\rangle| \geq
\biggl(\frac{c_1}{8}\frac{{\frak r}_n}{n}-\frac{C}{4}a^3\biggl(\frac{{\frak r}_n}{n}\biggr)^{3/2}-C_{\gamma} a\sqrt{\frac{t_n}{n}}\biggr)\|u\|\bigr\}
$$
$$
\geq 1-e^{-t_n}\to 1.
$$
Since 
$$
\biggl(\frac{c_1}{8}\frac{{\frak r}_n}{n}-\frac{C}{4}a^3\biggl(\frac{{\frak r}_n}{n}\biggr)^{3/2}-C_{\gamma} a\sqrt{\frac{t_n}{n}}\biggr)\|u\|
= \Bigl(\frac{c_1}{8}+o(1)\Bigr)\frac{{\frak r}_n}{n} \|u\|,
$$
this implies the claim of Proposition \ref{no_efficiency}.

It remains to define a $\Sigma_0$ with the desired properties. Let  
$$
\Sigma_0 = \sum_{s=1}^{r+1} \mu_s P_s,
$$
where $P_s= \theta_s \otimes \theta_s, s=1,\dots  r,$ 
$\theta_1,\dots ,\theta_r$ being arbitrary orthonormal 
vectors in ${\mathbb H}$ and $P_{r+1}$ is an orthogonal 
projection on a $d$-dimensional subspace of ${\mathbb H}$
orthogonal to $\theta_1,\dots, \theta_r.$ Let 
$\mu_s:= \mu_1 \bigl(1-\frac{s-1}{a}\bigr), s=1,\dots, r+1.$
Then $\bar g_r(\Sigma_0)=\frac{\mu_1}{a}$ and the condition 
$\frac{\|\Sigma_0\|}{\bar g_r(\Sigma_0)}\leq a$ is satisfied.
For simplicity, assume that $\|u\|=1.$ Moreover, since 
$\theta_1,\dots, \theta_r$ are arbitrary orthonormal vectors,
we can assume without loss of generality that, for $r>1,$ 
$u:= \frac{1}{\sqrt{2}}\theta_1 + \frac{1}{\sqrt{2}}\theta_r.$
Then $\langle \theta_r(\Sigma_0),u\rangle = \frac{1}{\sqrt{2}}>\frac{1}{2}\|u\|$
and, by a simple computation,
$$
\sigma_r^2(\Sigma_0;u)= \sum_{s\neq r} \frac{\mu_r \mu_s}{(\mu_r-\mu_s)^2}\|P_s u\|^2 = \frac{1}{2}\frac{\mu_1 \mu_r}{(\mu_1-\mu_r)^2} = 
\frac{1}{2}\biggl[\frac{a^2}{(r-1)^2} -
\frac{a}{r-1}\biggr].
$$
Assuming that $\sigma_0^2 \leq \frac{1}{2}\biggl[\frac{a^2}{(r-1)^2} -
\frac{a}{r-1}\biggr],$ we conclude that the condition $\sigma_r^2(\Sigma_0;u)\geq \sigma_0^2$ 
is satisfied. For $r=1,$ we can assume that $u:= \frac{1}{\sqrt{2}}\theta_1 + \frac{1}{\sqrt{2}}\theta_2$ with a slight modification of the argument.
Finally, we take dimension $d=d_n$ so that 
$$
{\bf r}(\Sigma_0) = \sum_{s=1}^r \frac{\mu_s}{\mu_1}
+\frac{\mu_{r+1}}{\mu_1}d_n = 
\sum_{s=1}^r \bigl(1-\frac{s-1}{a}\bigr) + \bigl(1-\frac{r}{a}\bigr)d_n
\in  (\frak{r}_n/2+1, \frak{r}_n].
$$
Then $\Sigma_0\in {\mathcal S}^{(r)}({\frak r}_n, a, \sigma_0, u).$
This completes the proof.

\section{Proof of Theorem \ref{efficient_tilde}}
\label{Sec:efficient_tilde}

Recall that the estimator $\check \theta_r$ is based on empirical eigenvectors 
$\hat \theta_r^{\delta_j,j}, j=1,2,3$ with parameters $\delta_j = \tau \|\hat \Sigma^{(j)}\|$
and with a proper choice of $\tau$ (as in Lemma \ref{hatPdelta}). These eigenvectors are 
in turn defined in terms of empirical spectral projections  $\hat P_r^{\delta_j,j}$ of sample covariances $\hat \Sigma^{(j)}$ (based on $\delta_j$-clusters of its spectrum $\sigma(\hat \Sigma^{(j)})$).
We will, however,
replace $\check \theta_r$ by the estimator $\tilde \theta_r$ defined in terms of empirical spectral projections $\hat P_r^{(j)}, j=1,2,3,$ $\hat P_r^{(j)}$ being the orthogonal projection onto direct sum of eigenspaces 
of $\hat \Sigma^{(j)}$ corresponding to its eigenvalues $\lambda_k(\hat \Sigma^{(j)}), k\in \Delta_r.$
Since ${\rm card}(\Delta_r)=m_r=1,$ $\hat P_r^{(j)}=\hat \theta_r^{(j)}\otimes \hat \theta_r^{(j)}$
and we can define 
$$
\hat d_r := 
\frac{\langle \hat \theta_r^{(1)}, \hat \theta_r^{(2)}\rangle}{\langle \hat \theta_r^{(2)}, \hat \theta_r^{(3)}\rangle^{1/2}}
$$
and 
$$
\tilde \theta_r := \frac{\hat \theta_r^{(1)}}{\hat d_r\vee (1/2)}.
$$ 
The reduction to this case is based on Lemma \ref{hatPdelta} (implying that 
$\hat P_r^{\delta_j,j}=\hat P_r^{(j)}$ with a high probability) and is straightforward (as in the proof of Theorem \ref{norm_approx}).

The rest of the proof is based on several lemmas stated and proved below. 

\begin{lemma}
Suppose that for some $\gamma \in (0,1)$ condition (\ref{cond_gamma}) holds for the sample covariance $\hat \Sigma^{(2)}$ based 
on $m$ observations:
\begin{equation}
\label{cond_gamma_m}
{\mathbb E}\|\hat \Sigma^{(2)}-\Sigma\|\leq \frac{(1-\gamma)g_r}{2}
\end{equation}
Then, for all $t\geq 1$ with probability at least $1-e^{-t}$ 
\begin{eqnarray}
\label{sqr_1}
&
\bigl|\langle \hat \theta_r^{(1)}, \hat \theta_r^{(2)}\rangle - \sqrt{1+b_r^{(n')}}\sqrt{1+b_r^{(m)}}\bigr|
\lesssim_{\gamma}
\frac{\|\Sigma\|^2}{g_r^2}\biggl(\sqrt{\frac{{\bf r}(\Sigma)}{m}}\bigvee \sqrt{\frac{t}{m}}\biggr)\sqrt{\frac{t}{m}}.
\end{eqnarray}
and with the same probability
\begin{eqnarray}
\label{sqr_2}
&
\bigl|\langle \hat \theta_r^{(2)}, \hat \theta_r^{(3)}\rangle - (1+b_r^{(m)})\bigr|
\lesssim_{\gamma}
\frac{\|\Sigma\|^2}{g_r^2}\biggl(\sqrt{\frac{{\bf r}(\Sigma)}{m}}\bigvee \sqrt{\frac{t}{m}}\biggr)\sqrt{\frac{t}{m}}.
\end{eqnarray}
\end{lemma}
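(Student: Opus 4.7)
The key structural feature to exploit is that the three subsamples underlying $\hat\Sigma^{(1)}, \hat\Sigma^{(2)}, \hat\Sigma^{(3)}$ are mutually independent. Thus, conditionally on $\hat\theta_r^{(1)}$, the vector $\hat\theta_r^{(2)}$ is still the empirical top-$r$ eigenvector of an independent sample covariance from $m$ observations, and the (frozen) $\hat\theta_r^{(1)}$ can be treated as a deterministic unit vector $u$ to which the earlier linear-functional bounds of Section \ref{sec:norm_approx} apply.

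For \eqref{sqr_1}, I condition on $\hat\theta_r^{(1)}$ and apply bound \eqref{theta-L} to $\hat\theta_r^{(2)}$ with $u:=\hat\theta_r^{(1)}$, noting $\|u\|=1$. Under the assumption \eqref{cond_gamma_m} (which is the analog of \eqref{cond_gamma} for the size-$m$ subsample) and using that Lemma \ref{brbou} together with \eqref{cond_gamma_m} guarantees $1+b_r^{(m)}\geq\gamma'$ for some $\gamma'\in(0,1)$, I get
\[
\langle \hat\theta_r^{(1)},\hat\theta_r^{(2)}\rangle
=\sqrt{1+b_r^{(m)}}\,\langle \theta_r,\hat\theta_r^{(1)}\rangle+R_1+R_2,
\]
where $R_1$ is the ``remainder minus linear-term'' error from \eqref{theta-L} applied at sample size $m$ (hence bounded by $\frac{\|\Sigma\|^2}{g_r^2}(\sqrt{{\bf r}(\Sigma)/m}\vee\sqrt{t/m})\sqrt{t/m}$ with probability at least $1-e^{-t}$), and $R_2:=\sqrt{1+b_r^{(m)}}\,\langle L_r(E^{(2)})\theta_r,\hat\theta_r^{(1)}\rangle$ is the Gaussian linear term, which by Lemma \ref{Lemma:conc_L_r} is controlled by $\sigma_r(\Sigma;\hat\theta_r^{(1)})\sqrt{t/m}\leq \frac{\|\Sigma\|}{g_r}\sqrt{t/m}$ (absorbed into the same bound after one elementary estimate using $\|\Sigma\|/g_r\lesssim\|\Sigma\|^2/g_r^2\sqrt{{\bf r}(\Sigma)/m}$ is false in general — so $R_2$ must be kept as its own term up to a constant, then bounded by the sharper conditional Gaussian bound; but since $\|\hat\theta_r^{(1)}\|=1$ and the size-$m$ linear term is already Gaussian of standard deviation $O(\sqrt{1/m})\cdot\|\Sigma\|/g_r$, it is dominated by the right-hand side of \eqref{sqr_1}).
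Finally, $\langle\theta_r,\hat\theta_r^{(1)}\rangle$ is handled by \eqref{theta_theta_r} applied at sample size $n'$: $|\langle\hat\theta_r^{(1)},\theta_r\rangle-\sqrt{1+b_r^{(n')}}|\lesssim_\gamma\frac{\|\Sigma\|^2}{g_r^2}(\sqrt{{\bf r}(\Sigma)/n'}\vee\sqrt{t/n'})\sqrt{t/n'}$, which since $n'\geq n/3>m$ is smaller than the target bound. Combining the two approximations via the identity
\[
\sqrt{1+b_r^{(m)}}\sqrt{1+b_r^{(n')}}-\sqrt{1+b_r^{(m)}}\langle\theta_r,\hat\theta_r^{(1)}\rangle
=\sqrt{1+b_r^{(m)}}\bigl(\sqrt{1+b_r^{(n')}}-\langle\theta_r,\hat\theta_r^{(1)}\rangle\bigr),
\]
using $\sqrt{1+b_r^{(m)}}\leq 1$, and absorbing the constant in the probability bound (turning $1-3e^{-t}$ into $1-e^{-t}$ as discussed in the notations subsection) yields \eqref{sqr_1}.

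For \eqref{sqr_2}, the same strategy applies with $\hat\theta_r^{(3)}$ playing the role of $\hat\theta_r^{(1)}$; the only difference is that now \emph{both} subsamples have size $m$, so the bound on $|\langle\theta_r,\hat\theta_r^{(3)}\rangle-\sqrt{1+b_r^{(m)}}|$ via \eqref{theta_theta_r} has the same order as the conditional error from \eqref{theta-L}, and both match the right-hand side of \eqref{sqr_2}. Multiplying $\sqrt{1+b_r^{(m)}}\cdot\sqrt{1+b_r^{(m)}}=1+b_r^{(m)}$ gives the stated centering.

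The main obstacle is careful bookkeeping: one must check that all error terms produced by the two applications of \eqref{theta-L}/\eqref{theta_theta_r} (one conditional and one unconditional) remain within the prescribed ``size-$m$'' envelope, which in turn requires verifying that $1+b_r^{(m)}\geq\gamma'$ so that \eqref{cond_b_r} is met for the conditional application; this verification is itself an immediate consequence of \eqref{cond_gamma_m}, Lemma \ref{brbou} and the upper bound \eqref{bd_Ar_up}, provided $n$ is large enough relative to ${\bf r}(\Sigma)$. Once this is in place, the rest of the argument is an elementary combination of the previously established high-probability bounds.
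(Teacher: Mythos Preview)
Your overall strategy is close to the paper's, but there is a genuine gap in your handling of the linear term $R_2=\sqrt{1+b_r^{(m)}}\,\langle L_r(E^{(2)})\theta_r,\hat\theta_r^{(1)}\rangle$. Treating $\hat\theta_r^{(1)}$ as a generic unit vector $u$ and invoking Lemma~\ref{Lemma:conc_L_r} only gives $|R_2|\lesssim \frac{\|\Sigma\|}{g_r}\sqrt{t/m}$, and this is \emph{not} dominated by the right-hand side of \eqref{sqr_1}: under assumption \eqref{cond_gamma_m} one has $\frac{\|\Sigma\|}{g_r}\sqrt{{\bf r}(\Sigma)/m}\lesssim 1$, so for $t\leq {\bf r}(\Sigma)$ the target bound $\frac{\|\Sigma\|^2}{g_r^2}\sqrt{{\bf r}(\Sigma)/m}\sqrt{t/m}$ is \emph{smaller} than $\frac{\|\Sigma\|}{g_r}\sqrt{t/m}$, not larger. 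Your parenthetical ``it is dominated by the right-hand side of \eqref{sqr_1}'' is therefore false, and the argument breaks.

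The missing observation is that $\langle L_r(E^{(2)})\theta_r,\theta_r\rangle=0$ (because $C_r\theta_r=0$), so in fact
\[
R_2=\sqrt{1+b_r^{(m)}}\,\bigl\langle L_r(E^{(2)})\theta_r,\ \hat\theta_r^{(1)}-\sqrt{1+b_r^{(n')}}\theta_r\bigr\rangle,
\]
and now the conditional application of \eqref{conc_L_r} (with $u=\hat\theta_r^{(1)}-\sqrt{1+b_r^{(n')}}\theta_r$) picks up the extra factor $\|\hat\theta_r^{(1)}-\sqrt{1+b_r^{(n')}}\theta_r\|\lesssim\frac{\|\Sigma\|}{g_r}\bigl(\sqrt{{\bf r}(\Sigma)/n'}\vee\sqrt{t/n'}\bigr)$, which is what makes $R_2$ fall inside the target envelope. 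The paper sidesteps this by using a symmetric bilinear decomposition from the start,
\[
\langle \hat\theta_r^{(1)},\hat\theta_r^{(2)}\rangle
=\sqrt{1+b_r^{(n')}}\sqrt{1+b_r^{(m)}}
+\sqrt{1+b_r^{(m)}}\langle \hat\theta_r^{(1)}-\sqrt{1+b_r^{(n')}}\theta_r,\theta_r\rangle
+\sqrt{1+b_r^{(n')}}\langle \hat\theta_r^{(2)}-\sqrt{1+b_r^{(m)}}\theta_r,\theta_r\rangle
+\langle \hat\theta_r^{(1)}-\sqrt{1+b_r^{(n')}}\theta_r,\ \hat\theta_r^{(2)}-\sqrt{1+b_r^{(m)}}\theta_r\rangle,
\]
bounds the two middle terms directly via \eqref{theta_theta_r}, and controls the last cross term by applying \eqref{on_hat_theta} to $\hat\theta_r^{(1)}$ conditionally on the second sample together with the norm bound $\|\hat\theta_r^{(2)}-\sqrt{1+b_r^{(m)}}\theta_r\|\lesssim\frac{\|\Sigma\|}{g_r}\bigl(\sqrt{{\bf r}(\Sigma)/m}\vee\sqrt{t/m}\bigr)$. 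Both routes ultimately exploit the same cancellation; yours just needs to make it explicit.
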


\begin{proof}
Obviously, condition \eqref{cond_gamma_m} holds also for the sample covariance $\hat \Sigma^{(2)}$
(which is based on a sample of the same size $m$). Moreover, it also holds for the sample covariance 
$\hat \Sigma^{(1)}$ based on $n'\geq m$ observations since the sequence $n\mapsto {\mathbb E}\|\hat \Sigma_n-\Sigma\|$ is non-increasing (see, e.g., Lemma 2.4.5 in \cite{Wellner}).

The following representation is obvious:
\begin{eqnarray}
\label{repr_theta_1_theta_2}
&
\nonumber
\langle \hat \theta_r^{(1)}, \hat \theta_r^{(2)}\rangle
= \sqrt{1+b_r^{(n')}}\sqrt{1+b_r^{(m)}} \langle \theta_r,\theta_r\rangle
\\
&
\nonumber
+ 
\sqrt{1+b_r^{(m)}}\langle \hat \theta_r^{(1)}- \sqrt{1+b_r^{(n')}} \theta_r, \theta_r\rangle
\\
&
\nonumber
+ 
\sqrt{1+b_r^{(n')}}\langle \hat \theta_r^{(2)}- \sqrt{1+b_r^{(m)}} \theta_r, \theta_r\rangle
\\
&
\langle \hat \theta_r^{(1)}- \sqrt{1+b_r^{(n')}} \theta_r, \hat \theta_r^{(2)}- \sqrt{1+b_r^{(m)}} \theta_r\rangle.
\end{eqnarray}
By bound (\ref{theta_theta_r}), with probability at least $1-e^{-t}$
\begin{eqnarray}
\label{theta_theta_r_1}
&
|\langle \hat \theta_r^{(1)}- \sqrt{1+b_r^{(n')}}\theta_r,\theta_r\rangle|
\lesssim_{\gamma} \frac{\|\Sigma\|^2}{g_r^2}\biggl(\sqrt{\frac{{\bf r}(\Sigma)}{n'}}\bigvee \sqrt{\frac{t}{n'}}\biggr)\sqrt{\frac{t}{n'}}
\end{eqnarray}
Similarly, with probability at least $1-e^{-t}$
\begin{eqnarray}
\label{theta_theta_r_2}
&
|\langle \hat \theta_r^{(2)}- \sqrt{1+b_r^{(m)}}\theta_r,\theta_r\rangle|
\lesssim_{\gamma} \frac{\|\Sigma\|^2}{g_r^2}\biggl(\sqrt{\frac{{\bf r}(\Sigma)}{m}}\bigvee \sqrt{\frac{t}{m}}\biggr)\sqrt{\frac{t}{m}}.
\end{eqnarray}
To bound the last term in the right hand side of (\ref{repr_theta_1_theta_2}), we apply bound (\ref{on_hat_theta}) to $\hat \theta_r^{(1)}$ conditionally 
on the second sample (similarly to the proof of Theorem 6 in \cite{KoltchinskiiLouniciPCAAHP}). This yields that with probability at least $1-e^{-t}$
\begin{equation}
\label{theta_r^1-theta_r^2}
|\langle \hat \theta_r^{(1)}- \sqrt{1+b_r^{(n')}} \theta_r, \hat \theta_r^{(2)}- \sqrt{1+b_r^{(m)}} \theta_r\rangle|
\lesssim_{\gamma} \frac{\|\Sigma\|}{g_r}\sqrt{\frac{t}{n'}}\|\hat \theta_r^{(2)}- \sqrt{1+b_r^{(m)}} \theta_r\|.
\end{equation}
On the other hand, under the assumption that $\langle \hat \theta_r,\theta_r\rangle\geq 0,$
\begin{eqnarray}
&
\nonumber
\|\hat \theta_r^{(2)}- \sqrt{1+b_r^{(m)}} \theta_r\|\leq 
\|\hat \theta_r^{(2)}-\theta_r\|+ \Bigl|\sqrt{1+b_r^{(m)}}-1\Bigr|
\\
&
\nonumber
=\sqrt{2-2\langle \hat \theta_r^{(2)},\theta_r\rangle} + \frac{|b_r^{(m)}|}{\sqrt{1+b_r^{(m)}}+1}
\leq 
\sqrt{2-2\langle \hat \theta_r^{(2)},\theta_r\rangle^2} + |b_r^{(m)}|
\\
&
\nonumber
=\sqrt{2-2\langle \hat P_r^{(2)},P_r\rangle} + |b_r^{(m)}|
=\|\hat P_r^{(2)}-P_r\|_2 + |b_r^{(m)}|.
\\
&
\nonumber
\leq \sqrt{2}\|\hat P_r^{(2)}-P_r\| + |b_r^{(m)}|.
\end{eqnarray}
By a standard perturbation bound (see, e.g., \cite{KoltchinskiiLouniciPCAAHP}),
$$
\|\hat P_r^{(2)}-P_r\|\leq 4\frac{\|\hat \Sigma^{(2)}-\Sigma\|}{g_r}.
$$
Thus, 
\begin{equation}
\|\hat \theta_r^{(2)}- \sqrt{1+b_r^{(m)}} \theta_r\|
\leq 4\sqrt{2}\frac{\|\hat \Sigma^{(2)}-\Sigma\|}{g_r}
+|b_r^{(m)}|.
\end{equation}
Using the exponential bound \eqref{exp_bd_Sigma} on $\|\hat \Sigma^{(2)}-\Sigma\|$ 
and bound (\ref{bd_b_r}), we obtain that with probability at least
$1-e^{-t}$
\begin{equation}
\label{theta_r^2}
\bigl\|\hat \theta_r^{(2)}- \sqrt{1+b_r^{(m)}} \theta_r\bigr\|\lesssim 
\frac{\|\Sigma\|}{g_r} \biggl(\sqrt{\frac{{\bf r}(\Sigma)}{m}}\bigvee \frac{{\bf r}(\Sigma)}{m} \bigvee \sqrt{\frac{t}{m}}\bigvee \frac{t}{m}\biggr) + \frac{\|\Sigma\|^2}{g_r^2} \frac{{\bf r}(\Sigma)}{m}.
\end{equation}
Under assumption \eqref{cond_gamma_m},
we have $\frac{\|\Sigma\|}{g_r} \sqrt{\frac{{\bf r}(\Sigma)}{m}}\lesssim 1,$ which implies $\frac{\|\Sigma\|^2}{g_r^2} \frac{{\bf r}(\Sigma)}{m}\lesssim \frac{\|\Sigma\|}{g_r} \sqrt{\frac{{\bf r}(\Sigma)}{m}}.$
Thus, the first term in the right hand side of bound (\ref{theta_r^2}) is dominant. Moreover, 
we can drop the term $\frac{{\bf r}(\Sigma)}{m}$ and, for $t\leq m,$ we can also drop the term $\frac{\|\Sigma\|}{g_r}\frac{t}{m}$ in the right hand side. 
Since the left hand side of (\ref{theta_r^2}) is not larger than $2,$ for $t>m,$ the term $\frac{\|\Sigma\|}{g_r}\sqrt{\frac{t}{m}}$ is larger (up to a constant) than the left 
hand side. Thus, the term $\frac{\|\Sigma\|}{g_r}\frac{t}{m}$ can be dropped for all the values of $t$ and the bound (\ref{theta_r^2}) simplifies as follows
\begin{equation}
\label{theta_r^2''}
\Bigl\|\hat \theta_r^{(2)}- \sqrt{1+b_r^{(m)}} \theta_r\Bigr\|\lesssim 
\frac{\|\Sigma\|}{g_r} \biggl(\sqrt{\frac{{\bf r}(\Sigma)}{m}} \bigvee \sqrt{\frac{t}{m}}\biggr)
\end{equation}
and it still holds with probability at least $1-e^{-t}.$ It follows from bound (\ref{theta_r^1-theta_r^2}) and (\ref{theta_r^2''})
that for all $t\geq 1$ with probability at least $1-2 e^{-t}$ 
\begin{equation}
\label{theta_r^1-theta_r^2_NN}
|\langle \hat \theta_r^{(1)}- \sqrt{1+b_r^{(n')}} \theta_r, \hat \theta_r^{(2)}- \sqrt{1+b_r^{(m)}} \theta_r\rangle|
\lesssim_{\gamma} \frac{\|\Sigma\|^2}{g_r^2}\biggl(\sqrt{\frac{{\bf r}(\Sigma)}{m}} \bigvee \sqrt{\frac{t}{m}}\biggr)\sqrt{\frac{t}{n'}}.
\end{equation}

Taking into account that $n'\geq m,$ it easily follows from representation (\ref{repr_theta_1_theta_2}) and bounds (\ref{theta_theta_r_1}), (\ref{theta_theta_r_2}) and (\ref{theta_r^1-theta_r^2_NN}) that with probability 
at least $1-e^{-t}$
\begin{eqnarray*}
&
\nonumber
\Bigl|\langle \hat \theta_r^{(1)}, \hat \theta_r^{(2)}\rangle - \sqrt{1+b_r^{(n')}}\sqrt{1+b_r^{(m)}}\Bigr|
\lesssim_{\gamma}
\frac{\|\Sigma\|^2}{g_r^2}\biggl(\sqrt{\frac{{\bf r}(\Sigma)}{m}}\bigvee \sqrt{\frac{t}{m}}\biggr)\sqrt{\frac{t}{m}},
\end{eqnarray*}
which proves (\ref{sqr_1}). The proof of bound (\ref{sqr_2}) is similar.
\end{proof}

Define 
$$
\Delta_1 := \frac{\langle \hat \theta_r^{(1)}, \hat \theta_r^{(2)}\rangle}{\sqrt{1+b_r^{(n')}}\sqrt{1+b_r^{(m)}}}-1
$$
and 
$$
\Delta_2 := \frac{\langle \hat \theta_r^{(2)}, \hat \theta_r^{(3)}\rangle}{1+b_r^{(m)}}-1.
$$
Assuming that 
\begin{equation}
\label{cond_b_r_b_r}
1+b_r^{(n')}\geq (3/4)^2\ \ {\rm and}\ \ 1+b_r^{(m)}\geq (3/4)^2,
\end{equation}
we obtain that, for some constant $C_{\gamma}>0$ and for $t\geq 1$ on an event $E$ of probability at least $1-e^{-t}$
\begin{equation}
\label{bd_Delta_1_2}
|\Delta_1|\vee |\Delta_2| \leq C_{\gamma}\frac{\|\Sigma\|^2}{g_r^2}\biggl(\sqrt{\frac{{\bf r}(\Sigma)}{m}}\bigvee \sqrt{\frac{t}{m}}\biggr)\sqrt{\frac{t}{m}}.
\end{equation}
Next we have 
$$
\hat d_r = 
\frac{\langle \hat \theta_r^{(1)}, \hat \theta_r^{(2)}\rangle}{\langle \hat \theta_r^{(2)}, \hat \theta_r^{(3)}\rangle^{1/2}}=
\frac{\langle \hat \theta_r^{(1)}, \hat \theta_r^{(2)}\rangle/((1+b_r^{(n')})^{1/2}(1+b_r^{(m)})^{1/2})}{\langle \hat \theta_r^{(2)}, \hat \theta_r^{(3)}\rangle^{1/2}/(1+b_r^{(m)})^{1/2}}
\sqrt{1+b_r^{(n')}}
$$
$$
=\frac{1+\Delta_1}{\sqrt{1+\Delta_2}}\sqrt{1+b_r^{(n')}}=\sqrt{1+b_r^{(n')}}+ \frac{1+\Delta_1-\sqrt{1+\Delta_2}}{\sqrt{1+\Delta_2}}\sqrt{1+b_r^{(n')}},
$$
which implies 
\begin{eqnarray}
\label{hat_d_r-b_r}
&
\nonumber
\nonumber
\Bigl|\hat d_r - \sqrt{1+b_r^{(n')}}\Bigr| \leq \sqrt{1+b_r^{(n')}}\frac{\Bigl|(1+\Delta_1)^2-(1+\Delta_2)\Bigr|}{\sqrt{1+\Delta_2}(1+\Delta_1+\sqrt{1+\Delta_2})}
\\
&
\leq \frac{2|\Delta_1|+ \Delta_1^2 + |\Delta_2|}{\sqrt{1+\Delta_2}(1+\Delta_1+\sqrt{1+\Delta_2})}.
\end{eqnarray}
Under the assumption that 
\begin{equation}
\label{assume_smaller_c}
\frac{\|\Sigma\|^2}{g_r^2}\biggl(\sqrt{\frac{{\bf r}(\Sigma)}{m}}\bigvee \sqrt{\frac{t}{m}}\biggr)\sqrt{\frac{t}{m}}\leq c_{\gamma}
\end{equation}
for a sufficiently small constant $c_{\gamma}>0,$ bounds (\ref{hat_d_r-b_r})  and (\ref{bd_Delta_1_2}) imply that 
on the event $E$ 
\begin{equation}
\label{d_r/b_r}
\biggl|\frac{\hat d_r}{\sqrt{1+b_r^{(n')}}}-1\biggr| \lesssim_{\gamma} \frac{\|\Sigma\|^2}{g_r^2}\biggl(\sqrt{\frac{{\bf r}(\Sigma)}{m}}\bigvee \sqrt{\frac{t}{m}}\biggr)\sqrt{\frac{t}{m}}.
\end{equation}
Moreover, on the same event $E,$ 
\begin{eqnarray}
\label{d_r_lower}
&
\nonumber
\hat d_r \geq \sqrt{1+b_r^{(n')}}- \frac{2|\Delta_1|+ \Delta_1^2 + |\Delta_2|}{\sqrt{1+\Delta_2}(1+\Delta_1+\sqrt{1+\Delta_2})}
\\
&
\geq \frac{3}{4}- \frac{2|\Delta_1|+ \Delta_1^2 + |\Delta_2|}{\sqrt{1+\Delta_2}(1+\Delta_1+\sqrt{1+\Delta_2})}\geq \frac{1}{2},
\end{eqnarray}
\begin{equation}
\label{b_r/d_r}
\biggl|\frac{\sqrt{1+b_r^{(n')}}}{\hat d_r}-1\biggr| \lesssim_{\gamma} \frac{\|\Sigma\|^2}{g_r^2}\biggl(\sqrt{\frac{{\bf r}(\Sigma)}{m}}\bigvee \sqrt{\frac{t}{m}}\biggr)\sqrt{\frac{t}{m}}
\end{equation}
and also, using bound (\ref{bd_b_r}), we obtain that 
\begin{eqnarray}
\label{d_r-1}
&
\nonumber
|\hat d_r-1|\leq |\sqrt{1+b_r^{(n')}}-1| + \frac{2|\Delta_1|+ \Delta_1^2 + |\Delta_2|}{\sqrt{1+\Delta_2}(1+\Delta_1+\sqrt{1+\Delta_2})}
\\
&
\nonumber
\leq |b_r^{(n')}|+ \frac{2|\Delta_1|+ \Delta_1^2 + |\Delta_2|}{\sqrt{1+\Delta_2}(1+\Delta_1+\sqrt{1+\Delta_2})}
\\
&
\lesssim_{\gamma} \frac{\|\Sigma\|^2}{g_r^2} \frac{{\bf r}(\Sigma)}{n'}+ \frac{\|\Sigma\|^2}{g_r^2}\biggl(\sqrt{\frac{{\bf r}(\Sigma)}{m}}\bigvee \sqrt{\frac{t}{m}}\biggr)\sqrt{\frac{t}{m}}.
\end{eqnarray}
and 
\begin{equation}
\label{1/d_r-1}
\bigl|\frac{1}{\hat d_r}-1\bigr|
\lesssim_{\gamma} \frac{\|\Sigma\|^2}{g_r^2} \frac{{\bf r}(\Sigma)}{n'}+ \frac{\|\Sigma\|^2}{g_r^2}\biggl(\sqrt{\frac{{\bf r}(\Sigma)}{m}}\bigvee \sqrt{\frac{t}{m}}\biggr)\sqrt{\frac{t}{m}}.
\end{equation}

The key ingredient of the proof of Theorem \ref{efficient_tilde} is the following lemma.

\begin{lemma}
\label{lemma:tilde_theta}
Suppose that, for some $\gamma\in (0,1),$ conditions (\ref{cond_gamma_m})
and (\ref{cond_b_r_b_r}) hold. Then, for all $t\geq 1$ with probability at least 
$1-e^{-t}$
\begin{eqnarray}
\label{tilde_theta-L_r}
&
\nonumber
\bigl|\langle \tilde \theta_r-\theta_r,u\rangle-\langle L_r(\hat \Sigma^{(1)}-\Sigma)\theta_r,u\rangle\bigr|
\\
&
\lesssim_{\gamma}
\frac{\|\Sigma\|^2}{g_r^2}\biggl(\sqrt{\frac{{\bf r}(\Sigma)}{m}}\bigvee \sqrt{\frac{t}{m}}\bigvee \frac{t}{m}\biggr)\sqrt{\frac{t}{m}}\|u\|.
\end{eqnarray}
\end{lemma}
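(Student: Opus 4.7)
The plan is to start from the identity $\tilde\theta_r = \hat\theta_r^{(1)}/(\hat d_r \vee 1/2)$. First, I would work on the event $E$ (of probability at least $1-e^{-t}$) introduced in the discussion leading to \eqref{bd_Delta_1_2}, on which condition \eqref{assume_smaller_c} ensures that $\hat d_r \geq 1/2$ by \eqref{d_r_lower}, so $\tilde\theta_r = \hat\theta_r^{(1)}/\hat d_r$. On this event, I would use the decomposition
\begin{equation*}
\tilde\theta_r - \theta_r = \frac{1}{\hat d_r}\bigl(\hat\theta_r^{(1)} - \sqrt{1+b_r^{(n')}}\,\theta_r\bigr) + \biggl(\frac{\sqrt{1+b_r^{(n')}}}{\hat d_r} - 1\biggr)\theta_r,
\end{equation*}
take the inner product with $u$, and subtract $\langle L_r(E^{(1)})\theta_r,u\rangle$ (where $E^{(1)}:=\hat\Sigma^{(1)}-\Sigma$) to obtain three error terms:
\begin{equation*}
\mathrm{(a)}=\frac{1}{\hat d_r}\Bigl[\langle\hat\theta_r^{(1)} - \sqrt{1+b_r^{(n')}}\theta_r, u\rangle - \langle L_r(E^{(1)})\theta_r, u\rangle\Bigr],\quad \mathrm{(b)}=\Bigl(\frac{1}{\hat d_r}-1\Bigr)\langle L_r(E^{(1)})\theta_r,u\rangle,
\end{equation*}
\begin{equation*}
\mathrm{(c)}=\biggl(\frac{\sqrt{1+b_r^{(n')}}}{\hat d_r}-1\biggr)\langle\theta_r,u\rangle.
\end{equation*}

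Next I would bound each term using results already established. For (a), since $1/\hat d_r \leq 2$, it is controlled by \eqref{theta-L} applied to the first subsample, yielding the bound $\frac{\|\Sigma\|^2}{g_r^2}\bigl(\sqrt{{\bf r}(\Sigma)/n'}\vee \sqrt{t/n'}\vee t/n'\bigr)\sqrt{t/n'}\|u\|$; because $n' \geq m$, this is dominated by the RHS of the target bound. Term (c) is bounded directly by \eqref{b_r/d_r}, giving $\frac{\|\Sigma\|^2}{g_r^2}\bigl(\sqrt{{\bf r}(\Sigma)/m}\vee \sqrt{t/m}\bigr)\sqrt{t/m}\|u\|$, which is precisely of the stated order. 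For (b), I would combine \eqref{1/d_r-1} with the bound $|\langle L_r(E^{(1)})\theta_r,u\rangle| \leq \|E^{(1)}\|\|u\|/g_r$, which via the concentration \eqref{exp_bd_Sigma} is of order $\frac{\|\Sigma\|}{g_r}\bigl(\sqrt{{\bf r}(\Sigma)/n'}\vee\sqrt{t/n'}\bigr)\|u\|$. Multiplying and using $\frac{\|\Sigma\|}{g_r}\sqrt{{\bf r}(\Sigma)/m}\lesssim 1$ (which follows from \eqref{cond_gamma_m} and Theorem \ref{KL-Bernoulli}) shows (b) is also absorbed into the target bound.

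Finally, to remove the restriction \eqref{assume_smaller_c}, I would argue separately that if this condition fails then the RHS of the lemma already exceeds a fixed constant multiple of $\|u\|$; since on the definition of $\tilde\theta_r$ we have $\|\tilde\theta_r\| = \|\hat\theta_r^{(1)}\|/(\hat d_r\vee 1/2) \leq 2$, and since $\|L_r(E^{(1)})\theta_r\|\leq \|E^{(1)}\|/g_r$ can be bounded by the exponential bound \eqref{exp_bd_Sigma}, both $|\langle\tilde\theta_r-\theta_r,u\rangle|$ and $|\langle L_r(E^{(1)})\theta_r,u\rangle|$ are at most $O_\gamma(\|u\|)$ with probability at least $1-e^{-t}$, so the bound holds trivially in this regime. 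The main obstacle will be (b): keeping track of the scales and verifying that the product $|1/\hat d_r-1|\cdot|\langle L_r(E^{(1)})\theta_r,u\rangle|$, which naïvely produces a factor $\|\Sigma\|^3/g_r^3$, reduces to the stated $\|\Sigma\|^2/g_r^2$ after invoking $\frac{\|\Sigma\|}{g_r}\sqrt{{\bf r}(\Sigma)/m}\lesssim 1$.
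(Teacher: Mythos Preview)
Your decomposition is algebraically equivalent to the paper's (the paper writes $\tilde\theta_r-\theta_r$ as $\langle\hat\theta_r^{(1)}-\sqrt{1+b_r^{(n')}}\theta_r,\cdot\rangle + (\frac{1}{\hat d_r}-1)\langle\hat\theta_r^{(1)}-\sqrt{1+b_r^{(n')}}\theta_r,\cdot\rangle + (\frac{\sqrt{1+b_r^{(n')}}}{\hat d_r}-1)\langle\theta_r,\cdot\rangle$, then bounds the last two terms and finally invokes \eqref{theta-L}), and your handling of (a), (c) and the trivial regime is fine.

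The one genuine glitch is in (b). Bounding $|\langle L_r(E^{(1)})\theta_r,u\rangle|$ by $\|E^{(1)}\|\|u\|/g_r$ and then \eqref{exp_bd_Sigma} brings in a $\sqrt{{\bf r}(\Sigma)/n'}$ contribution. Multiplying this against the $\frac{\|\Sigma\|^2}{g_r^2}\frac{{\bf r}(\Sigma)}{n'}$ piece of \eqref{1/d_r-1} leaves you with $\frac{\|\Sigma\|^3}{g_r^3}\bigl(\frac{{\bf r}(\Sigma)}{n'}\bigr)^{3/2}\|u\|$. After one application of $\frac{\|\Sigma\|}{g_r}\sqrt{{\bf r}(\Sigma)/m}\lesssim 1$ this is $\lesssim \frac{\|\Sigma\|^2}{g_r^2}\frac{{\bf r}(\Sigma)}{n'}\|u\|$, and to absorb this into the target you would need $m\sqrt{{\bf r}(\Sigma)}\lesssim n'$, which is \emph{not} implied by \eqref{cond_gamma_m} and \eqref{cond_b_r_b_r} alone (the lemma only assumes $m<n/3$). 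The fix is immediate: bound $|\langle L_r(E^{(1)})\theta_r,u\rangle|$ via \eqref{conc_L_r} instead, which gives $\lesssim\sigma_r(\Sigma;u)\sqrt{t/n'}\leq \frac{\|\Sigma\|}{g_r}\sqrt{t/n'}\|u\|$ with no ${\bf r}(\Sigma)$ term. This is exactly the role played by \eqref{on_hat_theta} in the paper's grouping, and with it the product in (b) reduces cleanly to the target just as the paper's middle term does.
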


\begin{proof}
We use the following simple representation:
\begin{eqnarray}
&
\nonumber
\langle \tilde \theta_r-\theta_r,u\rangle = \langle \hat \theta_r^{(1)} - \sqrt{1+b_r^{(n')}}\theta_r,u\rangle
\\
&
 + \biggl(\frac{1}{\hat d_r}-1\biggr)\langle \hat \theta_r^{(1)} - \sqrt{1+b_r^{(n')}}\theta_r,u\rangle
+ \biggl(\frac{\sqrt{1+b_r^{(n')}}}{\bar d_r}-1\biggr) \langle \theta_r,u\rangle
\end{eqnarray}
that holds on the event $E$ (where $\hat d_r\geq 1/2$).
Using bounds (\ref{b_r/d_r}) and (\ref{1/d_r-1}) that both hold under assumption (\ref{assume_smaller_c}) on the event $E$
as well as bound (\ref{on_hat_theta}) (applied to $\hat \theta_r^{(1)}$ with $n=n'$), we obtain that with probability at least $1-2e^{-t}$
\begin{eqnarray*}
&
\nonumber
\biggl|\langle \tilde \theta_r-\theta_r,u\rangle-\langle \hat \theta_r^{(1)} - \sqrt{1+b_r^{(n')}}\theta_r,u\rangle\biggr|
\\
&
\nonumber
\lesssim_{\gamma}
\frac{\|\Sigma\|^2}{g_r^2} \frac{{\bf r}(\Sigma)}{n'} \frac{\|\Sigma\|}{g_r}\sqrt{\frac{t}{n'}}\|u\|
+ \frac{\|\Sigma\|^2}{g_r^2}\biggl(\sqrt{\frac{{\bf r}(\Sigma)}{m}}\bigvee \sqrt{\frac{t}{m}}\biggr)\sqrt{\frac{t}{m}}
\frac{\|\Sigma\|}{g_r}\sqrt{\frac{t}{n'}}\|u\|
\\
&
\nonumber
+\frac{\|\Sigma\|^2}{g_r^2}\biggl(\sqrt{\frac{{\bf r}(\Sigma)}{m}}\bigvee \sqrt{\frac{t}{m}}\biggr)\sqrt{\frac{t}{m}}\|u\|.
\end{eqnarray*}
It is easy to check that the last term in the right hand side is dominant yielding the simpler bound 
\begin{eqnarray}
\label{tilde_theta-theta}
&
\nonumber
\bigl|\langle \tilde \theta_r-\theta_r,u\rangle-\langle \hat \theta_r^{(1)} - \sqrt{1+b_r^{(n')}}\theta_r,u\rangle\bigr|
\\
&
\lesssim_{\gamma}
\frac{\|\Sigma\|^2}{g_r^2}\biggl(\sqrt{\frac{{\bf r}(\Sigma)}{m}}\bigvee \sqrt{\frac{t}{m}}\biggr)\sqrt{\frac{t}{m}}\|u\|
\end{eqnarray}
that holds under assumption (\ref{assume_smaller_c}) with probability at least $1-e^{-t}.$
Since the left hand side is bounded by $5\|u\|,$ bound (\ref{tilde_theta-theta}) also holds trivially when 
$$
\frac{\|\Sigma\|^2}{g_r^2}\biggl(\sqrt{\frac{{\bf r}(\Sigma)}{m}}\bigvee \sqrt{\frac{t}{m}}\biggr)\sqrt{\frac{t}{m}}> c_{\gamma}.
$$
It remains to combine (\ref{tilde_theta-theta}) with the bound (\ref{theta-L}) (applied to $\hat \theta_r^{(1)}$) to complete the proof.

\end{proof}

The following statement is an immediate consequence of Lemma \ref{lemma:tilde_theta} and Lemma \ref{Lemma:conc_L_r}. As always, we dropped the terms $\frac{t}{n'},$ $\frac{t}{m}$ from the bounds since
the left-hand side is smaller that $3\|u\|$ and, for $t\geq n'$ or $t\geq m$ (the only cases when these terms might be needed), it is dominated by the expression with $\sqrt{\frac{t}{n'}},$ $\sqrt{\frac{t}{m}}$
only.

\begin{Corollary}
\label{tilde_theta:exponent}
Suppose that, for some $\gamma\in (0,1),$ conditions (\ref{cond_gamma_m})
and (\ref{cond_b_r_b_r}) hold. Then, for all $t\geq 1$ with probability at least 
$1-e^{-t}$
\begin{eqnarray}
\label{tilde_theta_exponent}
&
\bigl|\langle \tilde \theta_r-\theta_r,u\rangle\bigr|
\lesssim_{\gamma}
\frac{\|\Sigma\|}{g_r}\sqrt{\frac{t}{n'}}\|u\|+
\frac{\|\Sigma\|^2}{g_r^2}\biggl(\sqrt{\frac{{\bf r}(\Sigma)}{m}}\bigvee \sqrt{\frac{t}{m}}\biggr)\sqrt{\frac{t}{m}}\|u\|.
\end{eqnarray}
\end{Corollary}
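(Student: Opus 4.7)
The plan is to decompose
\[
\langle \tilde\theta_r - \theta_r, u\rangle = \langle L_r(E^{(1)})\theta_r, u\rangle + R,
\qquad
R := \langle \tilde\theta_r - \theta_r, u\rangle - \langle L_r(E^{(1)})\theta_r, u\rangle,
\]
where $E^{(1)} := \hat\Sigma^{(1)} - \Sigma$, and then to bound the two pieces by the two lemmas explicitly invoked before the corollary. The remainder $R$ is exactly what Lemma \ref{lemma:tilde_theta} controls: under the standing assumptions \eqref{cond_gamma_m} and \eqref{cond_b_r_b_r}, on an event of probability at least $1-e^{-t}$,
\[
|R| \lesssim_\gamma \frac{\|\Sigma\|^2}{g_r^2}\biggl(\sqrt{\frac{{\bf r}(\Sigma)}{m}}\bigvee \sqrt{\frac{t}{m}}\bigvee \frac{t}{m}\biggr)\sqrt{\frac{t}{m}}\|u\|,
\]
which already matches the second term on the right-hand side of \eqref{tilde_theta_exponent} up to the tail $\frac{t}{m}$ to be discarded below.

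For the linear piece I would apply the concentration bound \eqref{conc_L_r} of Lemma \ref{Lemma:conc_L_r} to $L_r(E^{(1)})$, noting that this operator depends only on the first subsample of size $n'$, so the lemma applies with sample size $n'$. Combined with the trivial estimate $\sigma_r(\Sigma;u) \leq \frac{\|\Sigma\|}{g_r}\|u\|$ from \eqref{bd_sigma_r}, this yields, on an independent event of probability at least $1-e^{-t}$,
\[
|\langle L_r(E^{(1)})\theta_r, u\rangle| \lesssim \frac{\|\Sigma\|}{g_r}\|u\|\biggl(\sqrt{\frac{t}{n'}}\bigvee \frac{t}{n'}\biggr).
\]
A union bound over the two events (with the usual adjustment of constants to absorb the factor of $2$ back into $e^{-t}$, as explained in the preliminaries) combines the two estimates into a single bound of the claimed form, modulo the $\frac{t}{n'}$ and $\frac{t}{m}$ terms.

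Finally, these two tail terms can be dropped exactly as indicated in the preceding remark. Since $\tilde\theta_r = \hat\theta_r^{(1)}/(\hat d_r \vee 1/2)$ has norm at most $2$, the left-hand side of \eqref{tilde_theta_exponent} is deterministically bounded by $3\|u\|$. Because $\|\Sigma\|/g_r \geq 1$, whenever $t \geq n'$ the term $\frac{\|\Sigma\|}{g_r}\sqrt{t/n'}\|u\|$ alone already exceeds $\|u\|$, so the $\frac{t}{n'}$ tail is redundant; the analogous reasoning removes $\frac{t}{m}$ via $\sqrt{t/m}$ when $t \geq m$. There is really no obstacle in this proof: once Lemmas \ref{lemma:tilde_theta} and \ref{Lemma:conc_L_r} are in hand, the corollary is a two-line triangle-inequality argument, which is exactly why the text advertises it as an ``immediate consequence.''
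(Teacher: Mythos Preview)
Your proof is correct and follows essentially the same route as the paper: the text explicitly states that the corollary is an immediate consequence of Lemma~\ref{lemma:tilde_theta} and Lemma~\ref{Lemma:conc_L_r}, then drops the $t/n'$ and $t/m$ terms using the deterministic bound $|\langle\tilde\theta_r-\theta_r,u\rangle|\leq 3\|u\|$, exactly as you do. One small wording quibble: the second event is not probabilistically independent of the first (since $\tilde\theta_r$ depends on the first subsample as well), but your union bound argument does not require independence, so this does not affect correctness.
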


Lemma \ref{lemma:tilde_theta} implies the following statement. 
This, in turn, implies Theorem \ref{efficient_tilde}.

\begin{lemma}
\label{th:normal_tilde}
Suppose that $m^2 \geq 2n{\bf r}(\Sigma)$ and conditions (\ref{cond_gamma_m}) and (\ref{cond_b_r_b_r}) hold for some $\gamma\in (0,1).$
For a given $u\in {\mathbb H},$ suppose that $\sigma_r(\Sigma;u)>0.$ Let $\alpha\geq 1.$ Then the following bounds holds: 
for some constants $C,C_{\gamma, \alpha}>0,$
\begin{align}
\label{norm_approx_A_tilde}
&
\nonumber
\sup_{x\in {\mathbb R}}\bigl|{\mathbb P}\bigl\{\frac{\sqrt{n}\langle \tilde \theta_r- \theta_r,u\rangle}{\sigma_r(\Sigma;u)}\leq x\bigr\}-\Phi(x)\bigr|
\\
&
\leq  C ({n'})^{-1/2} + \frac{C_{\gamma,\alpha}}{\sigma_r(\Sigma;u)}
\frac{\|\Sigma\|^2}{g_r^2}\biggl(\sqrt{\frac{n{\bf r}(\Sigma)}{m^2} \log \frac{m^2}{n{\bf r}(\Sigma)}}\bigvee \sqrt{\frac{n\log^2 \frac{m^2}{n{\bf r}(\Sigma)}}{m^2}}\biggr)\|u\|+ \biggl(\frac{n{\bf r}(\Sigma)}{m^2}\biggr)^{\alpha}.
\end{align}
Moreover, denote
$$
\tau_1 := C_{\gamma}\biggl(\frac{\|\Sigma\|}{g_r}\bigvee \frac{\|\Sigma\|^2}{g_r^2}\sqrt{\frac{n{\bf r}(\Sigma)}{m^2}}\biggr)\|u\|
$$
and 
$$
\tau_2:= C_{\gamma}\frac{\|\Sigma\|^2}{g_r^2}\sqrt{\frac{n}{m^2}}\|u\|.
$$
Suppose that Assumptions \ref{assump_loss} on the loss $\ell$ holds and 
$c_2\tau_2 \leq 1/4.$ 
There exist constants $C, C_{\gamma}, C_{\gamma,\alpha}>0$ such that 
\begin{align}
\label{bd_loss_tilde_A}
&
\nonumber
\bigl|{\mathbb E} \ell\biggl( \frac{\sqrt{n}\langle \tilde \theta_r- \theta_r,u\rangle}{\sigma_r(\Sigma;u)}\biggr)-{\mathbb E}\ell(Z)\bigr|
\\
&
\nonumber
\leq c_1 e^{c_2 A}
 \biggl(C({n'})^{-1/2} + 
 \frac{C_{\gamma,\alpha}}{\sigma_r(\Sigma;u)}
\frac{\|\Sigma\|^2}{g_r^2}\biggl(\sqrt{\frac{n{\bf r}(\Sigma)}{m^2} \log \frac{m^2}{n{\bf r}(\Sigma)}}\bigvee \sqrt{\frac{n\log^2 \frac{m^2}{n{\bf r}(\Sigma)}}{m^2}}\biggr)\|u\|+ \biggl(\frac{n{\bf r}(\Sigma)}{m^2}\biggr)^{\alpha}\biggr)
\\
&
+ 
2 e^{3/2} (2\pi)^{1/4} c_1 
e^{c_2^2 \tau_1^2}
(e^{-A^2/2\tau_1^2}\vee e^{-A/2\tau_2})
+ 
c_1 e^{c_2^2} e^{-A^2/4}.
\end{align}
\end{lemma}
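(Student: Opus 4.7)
The plan is to mirror the proof of Lemma~\ref{th:normal}, replacing the single-sample perturbation identity~\eqref{theta-L} by its sample-split analogue~\eqref{tilde_theta-L_r} from Lemma~\ref{lemma:tilde_theta}. Intuitively, since the debiasing factor $\hat d_r\vee(1/2)$ is $1+o(1)$ with high probability and the first subsample of size $n'$ drives the linear term, $\langle\tilde\theta_r-\theta_r,u\rangle$ is governed by $\langle L_r(\hat\Sigma^{(1)}-\Sigma)\theta_r,u\rangle$, which is asymptotically $N(0,\sigma_r^2(\Sigma;u)/n')$, yielding after $\sqrt{n}$-rescaling a Gaussian approximation with variance $n/n'\to 1$.

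For the Kolmogorov estimate~\eqref{norm_approx_A_tilde}, I set
$$
\xi := \frac{\sqrt{n}\langle\tilde\theta_r-\theta_r,u\rangle}{\sigma_r(\Sigma;u)}, \qquad
\eta := \frac{\sqrt{n}\langle L_r(\hat\Sigma^{(1)}-\Sigma)\theta_r,u\rangle}{\sigma_r(\Sigma;u)},
$$
and invoke the first part of Lemma~\ref{lemma_xi_eta}. Multiplying~\eqref{tilde_theta-L_r} by $\sqrt{n}/\sigma_r(\Sigma;u)$ and choosing $t:=\alpha\log(m^2/(n{\bf r}(\Sigma)))$ (which is $\ge 1$ since $m^2\ge 2n{\bf r}(\Sigma)$) bounds $\delta(\xi;\eta)$ by the middle and final summands of~\eqref{norm_approx_A_tilde}. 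For $\Delta(\eta;Z)$, write $\eta=\sqrt{n/n'}\,W_{n'}$ where $W_{n'}:=\sqrt{n'}\langle L_r(\hat\Sigma^{(1)}-\Sigma)\theta_r,u\rangle/\sigma_r(\Sigma;u)$; Lemma~\ref{Lemma:conc_L_r} yields $\Delta(W_{n'};Z)\lesssim 1/\sqrt{n'}$, and the elementary two-Gaussian estimate $\sup_x|\Phi(x)-\Phi(x\sqrt{n'/n})|\lesssim 1-\sqrt{n'/n}\lesssim m/n$ (using $m<n/3$) handles the variance mismatch and is absorbed into the $C(n')^{-1/2}$ term. Chaining the two bounds via Lemma~\ref{lemma_xi_eta} yields~\eqref{norm_approx_A_tilde}.

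For the loss statement~\eqref{bd_loss_tilde_A}, apply the second part of Lemma~\ref{lemma_xi_eta} to $\xi$ and $Z$. Rescaling Corollary~\ref{tilde_theta:exponent} by $\sqrt{n}/\sigma_r(\Sigma;u)$ and using $n/n'\le 2$, the subgaussian summand $(\|\Sigma\|/g_r)\sqrt{t/n'}$ and the cross summand $(\|\Sigma\|^2/g_r^2)\sqrt{{\bf r}/m}\sqrt{t/m}$ both contribute to the $\tau_1\sqrt{t}$ piece, while the quadratic-in-$t$ summand $(\|\Sigma\|^2/g_r^2)\cdot t/m$ contributes $\tau_2 t$; the factor $\|u\|/\sigma_r(\Sigma;u)$ is absorbed into the generic $C_\gamma$ defining $\tau_1,\tau_2$. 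Under $c_2\tau_2\le 1/4$, Lemma~\ref{Lemma:Bernstein-loss} gives $\mathbb{E}\ell^2(\xi)\le 2e\sqrt{2\pi}c_1^2 e^{2c_2^2\tau_1^2}+2ec_1^2$. Cauchy--Schwarz combined with the tail bound $\mathbb{P}\{|\xi|\ge A\}\le e^{-A^2/\tau_1^2}\vee e^{-A/\tau_2}$ yields $\mathbb{E}\ell(\xi)I(|\xi|\ge A)\le 2e^{3/2}(2\pi)^{1/4}c_1 e^{c_2^2\tau_1^2}(e^{-A^2/2\tau_1^2}\vee e^{-A/2\tau_2})$; coupling this with the standard Gaussian truncation $\mathbb{E}\ell(Z)I(|Z|\ge A)\le c_1 e^{c_2^2}e^{-A^2/4}$ and with $4\ell(A)\Delta(\xi;Z)\le c_1 e^{c_2 A}\cdot[\text{RHS of }\eqref{norm_approx_A_tilde}]$ produces~\eqref{bd_loss_tilde_A}.

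The main technical care, as in the proof of Lemma~\ref{th:normal}, is the optimization of $t$ in the remainder estimate: $t$ must be large enough that the failure probability $e^{-t}$ is controlled yet small enough that the $\sqrt{t/m}\cdot(\text{polynomial in }t)$ factors in~\eqref{tilde_theta-L_r} stay small, and it is this balance that produces the logarithmic factor $\log(m^2/(n{\bf r}(\Sigma)))$ in~\eqref{norm_approx_A_tilde}. The additional bookkeeping relative to Lemma~\ref{th:normal} comes entirely from the presence of three sample sizes $n,n',m$, notably the mild variance mismatch between $\sqrt{n}$-normalization and $n'$-Berry--Esseen, which is dispatched by the standard two-Gaussian comparison above.
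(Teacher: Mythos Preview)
Your proposal follows the paper's argument almost exactly: the paper also sets $\xi=\sqrt{n}\langle\tilde\theta_r-\theta_r,u\rangle/\sigma_r(\Sigma;u)$, compares it to the linear term $\eta$ via Lemma~\ref{lemma_xi_eta}, invokes Lemma~\ref{lemma:tilde_theta} with $t=\alpha\log(m^2/(n{\bf r}(\Sigma)))$ to control $\delta(\xi;\eta)$, and then uses Corollary~\ref{tilde_theta:exponent} together with Lemma~\ref{Lemma:Bernstein-loss} for the loss statement.

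One small quibble: the claim that the two-Gaussian discrepancy $\sup_x|\Phi(x)-\Phi(x\sqrt{n'/n})|\lesssim m/n$ is ``absorbed into the $C(n')^{-1/2}$ term'' is not literally correct, since in the generality of the lemma $m$ can be as large as $n/3$ while $(n')^{-1/2}\to 0$. The paper's own terse proof glosses over this mismatch as well; the stated bound \eqref{norm_approx_A_tilde} is really only sharp in the regime $m=o(n)$ relevant to Theorem~\ref{efficient_tilde}, where the extra $O(m/n)$ contribution vanishes. Similarly, your remark that the factor $\|u\|/\sigma_r(\Sigma;u)$ is ``absorbed into $C_\gamma$'' is not quite right (a constant depending only on $\gamma$ cannot absorb a $\Sigma$-dependent quantity); comparing with the analogous $\tau$ in Lemma~\ref{th:normal}, the definitions of $\tau_1,\tau_2$ in the statement should carry a $1/\sigma_r(\Sigma;u)$ factor. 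Neither point affects the method or the downstream application.
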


\begin{proof}
The proof is similar to that of Lemma \ref{th:normal}.
To prove \eqref{norm_approx_A_tilde}, we apply the first bound of Lemma \ref{lemma_xi_eta}
to the random variables
$$
\xi :=\frac{\sqrt{n}\langle \tilde \theta_r- \theta_r,u\rangle}{\sigma_r(\Sigma;u)},\ \ 
\eta := \frac{\langle L_r(\hat \Sigma^{(1)}-\Sigma)\theta_r,u\rangle}{\sigma_r(\Sigma;u)}. 
$$
and use the bound of Lemma \ref{lemma:tilde_theta} with $t=\alpha \log \bigl(\frac{m^2}{n{\bf r}(\Sigma)}\bigr)$
to control $\delta (\xi,\eta).$

To prove the bound \eqref{bd_loss_tilde_A}, observe that, by bound \eqref{tilde_theta_exponent}, for all
$t\geq 1$ with probability at least $1-e^{-t}$
$$
|\xi| \leq \tau_1 \sqrt{t}\vee \tau_2 t.
$$
Under assumption $c_2 \tau_2\leq 1/4,$ bound \eqref{Bernstein-loss} implies that
$$
{\mathbb E} \ell^2 (\xi)\leq 
2e\sqrt{2\pi}c_1^2 e^{2c_2^2 \tau_1^2}+ \frac{e c_1^2}{1-2c_2 \tau_2}
\leq 4e\sqrt{2\pi}c_1^2 e^{2c_2^2 \tau_1^2}.
$$
Therefore, 
$$
{\mathbb E}\ell(\xi)I(|\xi|\geq A)\leq  {\mathbb E}^{1/2}\ell^2 (\xi) {\mathbb P}^{1/2}\{|\xi|\geq A\}\leq 
2 e^{3/2} (2\pi)^{1/4} c_1 
e^{c_2^2 \tau_1^2}
(e^{-A^2/2\tau_1^2}\vee e^{-A/2\tau_2}). 
$$
It remains to repeat the rest of the proof of the second statement of Lemma \ref{th:normal}.
\end{proof}

\section{Proof of Corollary \ref{efficient_tilde_empirical}}
\label{Sec:efficient_tilde_empirical}

The proof is based on a deterministic bound on 
$|\sigma_r^2(\tilde \Sigma;u) - \sigma_r^2(\Sigma;u)|$ for a small perturbation 
$\tilde \Sigma$ of $\Sigma$ provided by the following lemma. 

\begin{lemma} 
\label{Lemma pertubation sigma_u - sigma_u(Sigma+E)} 
Let $m_r=1.$ Denote $E:=\tilde \Sigma-\Sigma$ and suppose that $\|E\| \leq g_r/4.$ 
Then
\begin{equation}
\label{sigma_tilde_sigma_1}
|\sigma_r^2(\tilde \Sigma;u) - \sigma_r^2(\Sigma;u)| 
\lesssim \frac{\| \Sigma \|^2}{g_r^2} \frac{\|E\|}{g_r} \|u\|^2.
\end{equation}
and 
\begin{equation}
\label{sigma_tilde_sigma_2}
\biggl|\frac{\sigma_r(\tilde \Sigma;u)}{\sigma_r(\Sigma;u)}-1\biggr| 
\lesssim 
\frac{1}{\sigma_r^2(\Sigma;u)}\frac{\| \Sigma \|^2}{g_r^2} \frac{\|E\|}{g_r} \|u\|^2.
\end{equation}
\end{lemma}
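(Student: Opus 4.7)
The second bound \eqref{sigma_tilde_sigma_2} will follow at once from the first by the identity
\[
\sigma_r(\tilde\Sigma;u)-\sigma_r(\Sigma;u)
=\frac{\sigma_r^2(\tilde\Sigma;u)-\sigma_r^2(\Sigma;u)}{\sigma_r(\tilde\Sigma;u)+\sigma_r(\Sigma;u)},
\]
and the trivial inequality $\sigma_r(\tilde\Sigma;u)+\sigma_r(\Sigma;u)\geq \sigma_r(\Sigma;u)$, so the whole task reduces to establishing \eqref{sigma_tilde_sigma_1}.

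Writing $\tilde\mu_r=\mu_r(\tilde\Sigma)$, $\tilde C_r = C_r(\tilde\Sigma)$ and similarly for $\Sigma$, I would use the telescoping decomposition
\begin{align*}
\sigma_r^2(\tilde\Sigma;u)-\sigma_r^2(\Sigma;u)
&= (\tilde\mu_r-\mu_r)\langle\tilde\Sigma \tilde C_r u,\tilde C_r u\rangle
+ \mu_r\langle (\tilde\Sigma-\Sigma)\tilde C_r u, \tilde C_r u\rangle \\
&\quad + \mu_r\langle \Sigma(\tilde C_r-C_r)u,\tilde C_r u\rangle
+ \mu_r\langle \Sigma C_r u,(\tilde C_r-C_r)u\rangle,
\end{align*}
and then bound each of the four terms separately. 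The ingredients are standard: Weyl's inequality \eqref{H. Weyl} gives $|\tilde\mu_r-\mu_r|\leq \|E\|$; the hypothesis $\|E\|\leq g_r/4\leq \|\Sigma\|/4$ yields $\|\tilde\Sigma\|\leq (5/4)\|\Sigma\|$ and (via Weyl again) that the $r$-th spectral gap of $\tilde\Sigma$ satisfies $g_r(\tilde\Sigma)\geq g_r/2$, hence $\|\tilde C_r\|\leq 2/g_r$, while $\|C_r\|=1/g_r$.

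The key estimate is the resolvent-type perturbation bound
\[
\|\tilde C_r - C_r\|\lesssim \frac{\|E\|}{g_r^2},
\]
which I would obtain from the Riesz functional calculus representation
\[
C_r=\frac{1}{2\pi i}\oint_{\gamma} \frac{1}{\mu_r-\zeta}(\zeta I-\Sigma)^{-1}\,d\zeta,
\]
with $\gamma$ a contour of length $\asymp g_r$ separating $\mu_r$ from the rest of $\sigma(\Sigma)$ at distance $\asymp g_r$, combined with the second resolvent identity $(\zeta I-\tilde\Sigma)^{-1}-(\zeta I-\Sigma)^{-1}=(\zeta I-\tilde\Sigma)^{-1}E(\zeta I-\Sigma)^{-1}$ and the uniform resolvent bound $\|(\zeta I-\Sigma)^{-1}\|,\|(\zeta I-\tilde\Sigma)^{-1}\|\lesssim 1/g_r$ on $\gamma$ (which holds because $\|E\|\leq g_r/4$ keeps $\sigma(\tilde\Sigma)$ separated from $\gamma$ by a gap $\asymp g_r$). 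The factor $1/(\mu_r-\zeta)$ along $\gamma$ is $O(1/g_r)$, the two resolvents contribute $1/g_r^2$, the contour length is $O(g_r)$, and we pick up one factor of $\|E\|$, giving the bound $\|E\|/g_r^2$.

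Substituting these bounds into the telescoping expansion, the first term contributes $\|E\|\cdot \|\tilde\Sigma\|\cdot (2/g_r)^2\|u\|^2\lesssim \|\Sigma\|\|E\|\|u\|^2/g_r^2$, the second $\|\Sigma\|\cdot\|E\|\cdot(2/g_r)^2\|u\|^2$, and the last two $\|\Sigma\|^2\cdot (1/g_r)\cdot (\|E\|/g_r^2)\|u\|^2$. Since $g_r\leq \|\Sigma\|$, the dominant contribution is $\|\Sigma\|^2\|E\|\|u\|^2/g_r^3$, which is exactly the right-hand side of \eqref{sigma_tilde_sigma_1}. The only delicate point is the uniform resolvent bound on the contour for both $\Sigma$ and $\tilde\Sigma$; everything else is arithmetic on the four summands.
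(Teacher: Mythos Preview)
Your approach is correct and is more direct than the paper's. The paper rewrites
\[
\sigma_r^2(\Sigma;u)=\tfrac12\,\trace\bigl(\Sigma D(\Sigma)\Sigma D(\Sigma)\bigr),\qquad D(\Sigma)=\theta_r(\Sigma)\otimes C_r(\Sigma)u+C_r(\Sigma)u\otimes\theta_r(\Sigma),
\]
and then telescopes the trace, using a contour-integral representation of the operator $B(\Sigma)=P_r(\Sigma)\otimes C_r(\Sigma)+C_r(\Sigma)\otimes P_r(\Sigma)$ to control $\|D(\tilde\Sigma)-D(\Sigma)\|$. Your scalar telescoping of $\mu_r\langle\Sigma C_r u,C_r u\rangle$ avoids the tensor-product machinery entirely; the only perturbation bound you need is $\|\tilde C_r-C_r\|\lesssim\|E\|/g_r^2$, which is easier to state and prove than the paper's bound on $B(\tilde\Sigma)-B(\Sigma)$. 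The paper's formulation has the advantage that the trace identity $\sigma_r^2=\tfrac12\trace(\Sigma D\Sigma D)$ is reused elsewhere (in the lower bound section), but for this lemma your route is shorter.

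One detail to patch: the Riesz formula for $C_r(\tilde\Sigma)$ has $1/(\tilde\mu_r-\zeta)$ in place of $1/(\mu_r-\zeta)$, so when you subtract the two contour integrals there is a second contribution coming from
\[
\Bigl(\frac{1}{\tilde\mu_r-\zeta}-\frac{1}{\mu_r-\zeta}\Bigr)(\zeta I-\tilde\Sigma)^{-1},
\]
in addition to the resolvent-difference term you described. On the contour this extra factor is $O(\|E\|/g_r^2)$ (since $|\tilde\mu_r-\mu_r|\leq\|E\|$ and both denominators are $\asymp g_r$), so after multiplying by the resolvent bound $O(1/g_r)$ and the contour length $O(g_r)$ it still yields $O(\|E\|/g_r^2)$. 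The final bound $\|\tilde C_r-C_r\|\lesssim\|E\|/g_r^2$ is unaffected, but you should mention this term explicitly.
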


\begin{proof} 
We use the Riesz representation of the spectral projector 
$P_r(\tilde \Sigma)$
$$
P_r(\tilde \Sigma) = - \frac{1}{2 \pi i} \oint_{\gamma_r}
R_{\tilde \Sigma}(\eta) d\eta,
$$
where $R_B(\eta)=(B-\eta I)^{-1}$ denotes the resolvent of operator $B$
and $\gamma_r$ is the circle in $\mathbb{C}$ with center $\mu_r$ and 
radius $g_r/2$ (and with counterclockwise orientation). 
Since $\|\tilde E\|\leq \frac{g_r}{4}$ and $m_r=1,$ it is easy to see that there 
is only one eigenvalue $\mu_r(\tilde \Sigma)$ of $\tilde \Sigma$
inside $\gamma_r$ and that ${\rm dist}(\eta; \sigma(\tilde \Sigma))\geq \frac{g_r}{4}, \eta \in \gamma_r.$ 
Note also that, for all $\eta \in \gamma_r,$ 
\begin{equation} 
\label{Bound Resolvent} 
\|R_\Sigma (\eta)\| \leq \frac{2}{g_r}, \ \ 
\|R_{\tilde \Sigma} (\eta)\| \leq \frac{4}{g_r}
\end{equation} 
and 
\begin{align}
R_{\tilde \Sigma}(\eta)-R_\Sigma(\eta) 
& 
= ( \Sigma - \eta I + E)^{-1} - (\Sigma - \eta I )^{-1} 
\notag 
\\
& 
= \left [\left (I + R_\Sigma(\eta) E \right )^{-1}-I\right] R_\Sigma(\eta).
\end{align}
It follows that, for all $\eta\in \gamma_r,$
\begin{align} 
\label{Bound Resolvent pertubation - resolvent sigma}
\|R_{\tilde \Sigma}(\eta)-R_\Sigma(\eta)\| 
\leq \frac{2}{g_r} \| \left (I + R_\Sigma(\eta) E \right)^{-1}-I\| 
\leq \frac{2}{g_r } \sum_{k=1}^\infty \| R_\Sigma(\eta)E \|^k  
\leq \frac{8 \| E \|}{g_r^2}.
\end{align}

Denote 
$
A(\Sigma):=\theta_r(\Sigma) \otimes u+u\otimes\theta_r(\Sigma),
$
$
B(\Sigma):=P_r(\Sigma) \otimes C_r(\Sigma)+ C_r(\Sigma) \otimes P_r(\Sigma)$ 
and 
$$
D(\Sigma):= B(\Sigma)A(\Sigma) = \theta_r (\Sigma)\otimes C_r(\Sigma)u + C_r(\Sigma)u\otimes \theta_r(\Sigma).
$$
We have
\begin{align}
\frac{1}{2 \pi i} \oint_{\gamma_r} R_{\Sigma}(\eta)\otimes  R_{\Sigma} (\eta) d \eta 
& 
= \sum_{s, s'=1}^{\infty} 
\frac{1}{2 \pi i} \oint_{\gamma_r} \frac{d \eta}{(\mu_s - \eta)(\mu_{s'} - \eta)} P_s \otimes P_{s'} 
\notag 
\\ 
& 
= \sum_{s \neq r} \frac{1}{\mu_r - \mu_s} (P_r \otimes P_s + P_s \otimes P_r) 
\notag 
\\
& 
= P_r(\Sigma) \otimes C_r(\Sigma)+ C_r(\Sigma) \otimes P_r(\Sigma)=B(\Sigma).
\end{align}
Hence, using \eqref{Bound Resolvent} and \eqref{Bound Resolvent pertubation - resolvent sigma}, we derive the following bound for any bounded operator $H:$
\begin{align}
\label{odin}
& \big \| (B(\tilde \Sigma)-B(\Sigma))H\big \| 
\notag  
\\ 
= 
& 
\big \|\frac{1}{2 \pi i} \oint_{\gamma_r} 
\left [R_{\tilde \Sigma}(\eta) H R_{\tilde \Sigma}(\eta)-R_{\Sigma}(\eta)H R_\Sigma (\eta)\right ]d \eta \big \| 
\notag 
\\
= 
& 
\big \| \frac{1}{2 \pi i} \oint_{\gamma_r} 
\left [ (R_{\tilde \Sigma}(\eta)-R_\Sigma(\eta)) H R_{\tilde \Sigma}(\eta)
+R_\Sigma(\eta) H (R_{\tilde \Sigma}(\eta)-R_\Sigma (\eta ))\right ]d \eta \big \| 
\notag 
\\
\leq 
&  
\frac{g_r}{2} \frac{8 \|E\|}{g_r^2}  \|H\| \biggl(\frac{4}{g_r}+ \frac{2}{g_r}\biggr)
\leq 
\frac{24 \| E \| \|H\|}{g_r^2}. 
\end{align}
Note also that 
\begin{equation}
\label{dva}
\|A(\tilde \Sigma)\|\leq 2\|u\|,
\end{equation}
and, using the bound $\|C_r(\Sigma)\|\leq \frac{1}{g_r},$ 
\begin{equation}
\label{tri}
\|B(\Sigma)H\| \leq \|P_r(\Sigma)HC_r(\Sigma)\|+ \|C_r(\Sigma)HP_r(\Sigma)\|
\leq \frac{2}{g_r}\|H\|. 
\end{equation}
Finally, observe that, by standard perturbation bounds, 
\begin{align}
\label{piat}
&
\notag
\|A(\tilde \Sigma)-A(\Sigma)\| 
\leq 2\|\theta_r(\tilde \Sigma)-\theta_r(\Sigma)\|\|u\|
\\
&
\notag
\leq 2\|P_r(\tilde \Sigma)-P_r(\Sigma)\|_2\|u\|
\leq 2\sqrt{2}\|P_r(\tilde \Sigma)-P_r(\Sigma)\|_2\|u\|
\\
&
\leq \frac{8\sqrt{2}\|E\| \|u\|}{g_r}.
\end{align}
It follows from bounds \eqref{odin}, \eqref{dva}, \eqref{tri} and \eqref{piat}
that 
\begin{align}
\label{Bound pertubation D - D}
&
\notag
\|D(\tilde \Sigma)-D(\Sigma)\| \leq 
\|(B(\tilde \Sigma)-B(\Sigma))A(\tilde \Sigma)\|+ \|B(\Sigma)(A(\tilde \Sigma)-A(\Sigma))\|
\\
&
\notag
\leq \frac{24 \| E \| \|A(\tilde \Sigma)\|}{g_r^2} +  \frac{2}{g_r}\|A(\tilde \Sigma)-A(\Sigma)\|
\leq \frac{48\|E\| \|u\|}{g_r^2}+\frac{2}{g_r}\frac{8\sqrt{2}\|E\| \|u\|}{g_r} 
\\
&
\leq  \frac{80 \|E\|\|u\|}{g_r^2}.
\end{align}
Now, recall that 
\begin{align}
\label{var_D_D}
&
\sigma_r^2(\Sigma;u)= \langle \Sigma \theta_r(\Sigma), \theta_r(\Sigma)\rangle 
\langle \Sigma C_r(\Sigma)u, C_r(\Sigma)u\rangle
\notag
\\
&
=\frac{1}{2}\Bigl\|\Sigma^{1/2}(\theta_r (\Sigma)\otimes C_r(\Sigma)u + C_r(\Sigma)u\otimes \theta_r)\Sigma^{1/2}\Bigr\|_2^2
\notag
\\
&
=\frac{1}{2}\|\Sigma^{1/2}D(\Sigma)\Sigma^{1/2}\|_2^2 
= \frac{1}{2}
{\rm tr}(\Sigma^{1/2}D(\Sigma)\Sigma^{1/2}\Sigma^{1/2}D(\Sigma)\Sigma^{1/2})
\notag
\\
&
=\frac{1}{2}
{\rm tr}(\Sigma D(\Sigma)\Sigma D(\Sigma)).
\end{align}
Hence, by the duality between operator and nuclear norms and since $\rank(D(\Sigma))\leq 2,
\rank(D(\tilde \Sigma))\leq 2,$ we have that  
\begin{align} 
\label{Bound pertubation sigma - sigma 1} 
|\sigma_r^2(\tilde \Sigma;u) - \sigma_r^2(\Sigma;u)|  
=
& 
\frac{1}{2} \big | \trace (\tilde \Sigma D(\tilde \Sigma) \tilde \Sigma D(\tilde \Sigma))- 
\trace (\Sigma D(\Sigma) \Sigma D(\Sigma)) \big | 
\notag 
\\
= 
& 
\frac{1}{2} \big | \trace((\tilde \Sigma - \Sigma)D(\tilde \Sigma) \tilde \Sigma D(\tilde \Sigma)) 
+\trace(\Sigma(D(\tilde \Sigma)-D(\Sigma))\tilde\Sigma D(\tilde \Sigma))  
\notag 
\\
&
+ \trace(\Sigma D (\Sigma) (\tilde \Sigma-\Sigma)D(\tilde \Sigma))
+ \trace (\Sigma D(\Sigma) \Sigma (D(\tilde \Sigma)-D(\Sigma))) \big | 
\notag 
\\
& 
\leq \frac{1}{2}\|\tilde \Sigma - \Sigma \| 
\big ( \|D(\tilde \Sigma) \tilde \Sigma D(\tilde \Sigma)\|_1 
+ \|D(\tilde \Sigma) \Sigma D(\Sigma) \|_1 \big ) 
\notag 
\\
&
+ 
\frac{1}{2}\|D(\tilde \Sigma) - D\| \big (\|\tilde \Sigma D(\tilde \Sigma) \Sigma\|_1  + 
\|\Sigma D(\Sigma) \Sigma\|_1   \big )
\notag 
\\
& 
\leq \|\tilde \Sigma - \Sigma \| 
\big ( \|D(\tilde \Sigma) \tilde \Sigma D(\tilde \Sigma)\| + 
\|D(\tilde \Sigma) \Sigma D(\Sigma) \| \big ) 
\notag 
\\
& 
+ 
\|D(\tilde \Sigma) - D\| \big (\|\tilde \Sigma D(\tilde \Sigma) \Sigma\|  + 
\|\Sigma D(\Sigma) \Sigma\|   \big ).
\end{align}
It remains to observe that $\|C_r(\Sigma)\|\leq \frac{1}{g_r},$
$\|C_r(\tilde \Sigma)\|\leq \frac{2}{g_r}$ and that 
$$
\|D(\Sigma)\|\leq 2 \| (\theta_r (\Sigma)\otimes C_r(\Sigma)u)\| 
\leq 2 \|C_r(\Sigma)\| \|u\|\leq \frac{2 \|u \|}{g_r},
$$
$$
\|D(\tilde \Sigma)\| \leq 2 \| (\theta_r (\tilde \Sigma)\otimes C_r(\tilde \Sigma)u)\| 
\leq 2 \|C_r(\tilde \Sigma)\| \|u\|\leq \frac{4 \|u \|}{g_r}
$$
and 
$$
\| \tilde \Sigma \| \leq \| \Sigma \| + \|E\| \leq \|\Sigma\|+ \frac{g_r}{4}\leq 
2\|\Sigma\|,
$$ 
implying the bounds 
\begin{align}
\label{DSigmaD}
&
\|D(\tilde \Sigma) \tilde \Sigma D(\tilde \Sigma)\|\leq \frac{32 \|\Sigma\| \|u\|^2}{g_r^2},
\ \  
\|D(\tilde \Sigma) \Sigma D(\Sigma)\| \leq \frac{8 \|\Sigma\| \|u\|^2}{g_r^2},
\notag 
\\
&
\|\tilde \Sigma D(\tilde \Sigma) \Sigma\| \leq \frac{8 \|\Sigma\|^2 \|u\|}{g_r}
\ \ {\rm and}\ \ 
\|\Sigma D(\Sigma) \Sigma\|  \leq \frac{2 \|\Sigma\|^2 \|u\|}{g_r}.
\end{align}
Bound \eqref{sigma_tilde_sigma_1} now follows from 
\eqref{Bound pertubation sigma - sigma 1}, 
\eqref{Bound pertubation D - D} and \eqref{DSigmaD}.
Bound \eqref{sigma_tilde_sigma_2} follows from 
\eqref{sigma_tilde_sigma_1}. 
\end{proof}

It remains to apply this lemma to $\tilde \Sigma=\hat \Sigma$ and to use 
standard bounds on $\|\hat \Sigma-\Sigma\|$ to obtain the following inequalities.

\begin{proposition}
Suppose that condition (\ref{cond_gamma}) holds for some $\gamma\in (0,1).$
Then, there exists a constant $c_{\gamma}>0$ such that for all $t\in [1,c_{\gamma}n]$ with probability at least $1-e^{-t}$
\begin{equation}
\label{sigma_tilde_sigma_1_emp}
|\sigma_r^2(\hat \Sigma;u) - \sigma_r^2(\Sigma;u)| 
\lesssim \frac{\| \Sigma \|^3}{g_r^3} \biggl(\sqrt{\frac{{\bf r}(\Sigma)}{n}}
\bigvee \sqrt{\frac{t}{n}}\biggr)\|u\|^2
\end{equation}
and 
\begin{equation}
\label{sigma_tilde_sigma_2_emp}
\biggl|\frac{\sigma_r(\hat \Sigma;u)}{\sigma_r(\Sigma;u)}-1\biggr| 
\lesssim 
\frac{1}{\sigma_r^2(\Sigma;u)}
 \frac{\| \Sigma \|^3}{g_r^3} \biggl(\sqrt{\frac{{\bf r}(\Sigma)}{n}}
\bigvee \sqrt{\frac{t}{n}}\biggr)\|u\|^2.
\end{equation}
\end{proposition}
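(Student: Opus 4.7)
The plan is to combine the deterministic perturbation bound of Lemma \ref{Lemma pertubation sigma_u - sigma_u(Sigma+E)} with the high-probability bound \eqref{exp_bd_Sigma} on $\|\hat \Sigma-\Sigma\|$. First, I would verify that the hypothesis $\|\hat\Sigma-\Sigma\|\leq g_r/4$ of Lemma \ref{Lemma pertubation sigma_u - sigma_u(Sigma+E)} holds with the required probability. Under the assumption \eqref{cond_gamma}, Theorem \ref{KL-Bernoulli} implies that $\|\Sigma\|\bigl(\sqrt{{\bf r}(\Sigma)/n}\vee {\bf r}(\Sigma)/n\bigr)\lesssim_{\gamma}g_r$, so that the first two terms in the right-hand side of \eqref{exp_bd_Sigma} are controlled by a constant multiple of $g_r$. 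Choosing $c_{\gamma}>0$ small enough guarantees that for $t\in[1,c_{\gamma}n]$ the terms $\|\Sigma\|\sqrt{t/n}$ and $\|\Sigma\| t/n$ are at most $g_r/8$, so the whole bound \eqref{exp_bd_Sigma} is at most $g_r/4$ on an event $\mathcal{E}_t$ of probability at least $1-e^{-t}$.

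On the event $\mathcal{E}_t$, I would apply bound \eqref{sigma_tilde_sigma_1} of Lemma \ref{Lemma pertubation sigma_u - sigma_u(Sigma+E)} with $\tilde \Sigma=\hat\Sigma$ and $E=\hat\Sigma-\Sigma$, which gives
\begin{equation*}
|\sigma_r^2(\hat\Sigma;u)-\sigma_r^2(\Sigma;u)|\lesssim \frac{\|\Sigma\|^2}{g_r^2}\frac{\|\hat\Sigma-\Sigma\|}{g_r}\|u\|^2.
\end{equation*}
Inserting the probabilistic bound \eqref{exp_bd_Sigma} and using that on $\mathcal{E}_t$ the terms $\sqrt{t/n}$ and $t/n$ dominate (up to constants) $\sqrt{{\bf r}(\Sigma)/n}\vee{\bf r}(\Sigma)/n$ only when applicable, and in general that both $t/n\lesssim_{\gamma}\sqrt{t/n}$ and ${\bf r}(\Sigma)/n\lesssim\sqrt{{\bf r}(\Sigma)/n}$ on the range $t\in[1,c_{\gamma}n]$ and under the assumption ${\bf r}(\Sigma)\lesssim_{\gamma}n$, the four-term maximum in \eqref{exp_bd_Sigma} collapses to the simpler expression $\|\Sigma\|\bigl(\sqrt{{\bf r}(\Sigma)/n}\vee\sqrt{t/n}\bigr)$. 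This yields \eqref{sigma_tilde_sigma_1_emp}.

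For the second bound \eqref{sigma_tilde_sigma_2_emp}, I would factor
\begin{equation*}
\frac{\sigma_r(\hat\Sigma;u)}{\sigma_r(\Sigma;u)}-1=\frac{\sigma_r^2(\hat\Sigma;u)-\sigma_r^2(\Sigma;u)}{\sigma_r(\Sigma;u)\bigl(\sigma_r(\hat\Sigma;u)+\sigma_r(\Sigma;u)\bigr)}
\end{equation*}
and use the trivial lower bound $\sigma_r(\hat\Sigma;u)+\sigma_r(\Sigma;u)\geq\sigma_r(\Sigma;u)$ in the denominator. Then \eqref{sigma_tilde_sigma_2_emp} follows from \eqref{sigma_tilde_sigma_1_emp} by dividing through by $\sigma_r^2(\Sigma;u)$. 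There is essentially no nontrivial obstacle here: all the work has been done in Lemma \ref{Lemma pertubation sigma_u - sigma_u(Sigma+E)} and the probabilistic bound \eqref{exp_bd_Sigma}, and the only slight care needed is to make sure the constant $c_{\gamma}$ is chosen so that the event $\mathcal{E}_t$ has the right probability and the tail terms $t/n$ and ${\bf r}(\Sigma)/n$ can be absorbed into their square-root counterparts.
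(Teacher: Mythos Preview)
Your proposal is correct and follows essentially the same approach as the paper. The paper's own justification is the single sentence preceding the proposition: ``It remains to apply this lemma to $\tilde \Sigma=\hat \Sigma$ and to use standard bounds on $\|\hat \Sigma-\Sigma\|$ to obtain the following inequalities,'' and your write-up simply fills in those details---verifying the hypothesis $\|\hat\Sigma-\Sigma\|\leq g_r/4$ via \eqref{exp_bd_Sigma} and condition \eqref{cond_gamma}, applying \eqref{sigma_tilde_sigma_1}, collapsing the four-term maximum, and deducing \eqref{sigma_tilde_sigma_2_emp} from \eqref{sigma_tilde_sigma_1_emp} by the obvious algebraic manipulation.
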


The consistency of estimator $\sigma_r(\hat \Sigma;u)$ immediately 
follows:

\begin{proposition}
\label{consistency_sigma_r}
Suppose ${\frak r}_n>1,$ ${\frak r}_n=o(n)$ as $n\to\infty.$ For any sequence 
$\delta_n\to 0$ such that $\frac{\frak{r}_n}{n}=o(\delta_n^2)$ as $n\to\infty,$
\begin{eqnarray}
\label{consist_sigma_r}
&
\nonumber
\sup_{\Sigma \in {\mathcal S}^{(r)}({\frak r}_n, a, \sigma_0, u)}
{\mathbb P}_{\Sigma}\bigl\{\bigl|\frac{\sigma_r(\hat \Sigma;u)}{\sigma_r(\Sigma;u)}-1\bigr| \geq \delta_n\bigr\}\to 0\ {\rm as}\ n\to\infty.
\end{eqnarray}
\end{proposition}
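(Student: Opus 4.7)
The plan is to apply the deterministic perturbation bound of Lemma \ref{Lemma pertubation sigma_u - sigma_u(Sigma+E)} (in its empirical form \eqref{sigma_tilde_sigma_2_emp}) and then choose the deviation parameter $t$ carefully. The structural bounds defining ${\mathcal S}^{(r)}({\frak r}_n,a,\sigma_0,u)$, namely ${\bf r}(\Sigma)\leq {\frak r}_n,$ $\|\Sigma\|/\bar g_r(\Sigma)\leq a$ (so in particular $\|\Sigma\|^3/g_r^3\leq a^3$), and $\sigma_r^2(\Sigma;u)\geq \sigma_0^2,$ convert the right-hand side of \eqref{sigma_tilde_sigma_2_emp} into a bound that depends on $\Sigma$ only through ${\frak r}_n,a,\sigma_0,\|u\|.$

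First I would verify that condition \eqref{cond_gamma} holds uniformly over ${\mathcal S}^{(r)}({\frak r}_n,a,\sigma_0,u)$ for all sufficiently large $n$. By Theorem \ref{KL-Bernoulli},
\[
\frac{{\mathbb E}\|\hat\Sigma-\Sigma\|}{\|\Sigma\|}\lesssim \sqrt{\frac{{\bf r}(\Sigma)}{n}}\bigvee \frac{{\bf r}(\Sigma)}{n}\lesssim \sqrt{\frac{{\frak r}_n}{n}}\bigvee \frac{{\frak r}_n}{n}\to 0
\]
under the assumption ${\frak r}_n=o(n),$ while $g_r(\Sigma)\geq \|\Sigma\|/a$ from the class definition. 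Hence, for some fixed $\gamma=\gamma(a)\in(0,1),$ the hypothesis \eqref{cond_gamma} is satisfied for every $\Sigma\in {\mathcal S}^{(r)}({\frak r}_n,a,\sigma_0,u)$ once $n$ is large enough. This makes inequality \eqref{sigma_tilde_sigma_2_emp} available, with a uniform constant $c_\gamma>0.$

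Next I would choose a deviation level $t_n\to\infty$ with $t_n=o(n\delta_n^2).$ Such a sequence exists because $n\delta_n^2\to\infty$ by the hypothesis ${\frak r}_n/n=o(\delta_n^2)$ and ${\frak r}_n>1.$ In particular $t_n=o(n),$ so $t_n\leq c_\gamma n$ for all large $n,$ and \eqref{sigma_tilde_sigma_2_emp} applies with $t=t_n.$ Substituting the uniform bounds for members of the class yields, with ${\mathbb P}_\Sigma$-probability at least $1-e^{-t_n}$,
\[
\biggl|\frac{\sigma_r(\hat\Sigma;u)}{\sigma_r(\Sigma;u)}-1\biggr|\lesssim \frac{a^3\|u\|^2}{\sigma_0^2}\biggl(\sqrt{\frac{{\frak r}_n}{n}}\bigvee \sqrt{\frac{t_n}{n}}\biggr).
\]
By construction $\sqrt{{\frak r}_n/n}=o(\delta_n)$ and $\sqrt{t_n/n}=o(\delta_n),$ so the right-hand side is $o(\delta_n)$ uniformly in $\Sigma\in {\mathcal S}^{(r)}({\frak r}_n,a,\sigma_0,u).$ Consequently, for all $n$ large enough, the event $\{|\sigma_r(\hat\Sigma;u)/\sigma_r(\Sigma;u)-1|\geq \delta_n\}$ is contained in the complementary event of the concentration inequality, whose probability is at most $e^{-t_n}\to 0.$ Taking the supremum over $\Sigma\in {\mathcal S}^{(r)}({\frak r}_n,a,\sigma_0,u)$ gives \eqref{consist_sigma_r}.

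There is no genuine obstacle here beyond bookkeeping: the deterministic perturbation bound \eqref{sigma_tilde_sigma_2_emp} has already absorbed the hard analytic work via the resolvent calculus, and what remains is to pick $t_n$ growing just slowly enough that both the exceedance probability $e^{-t_n}$ and the deterministic bound on the ratio vanish. The only mild subtlety is ensuring that $t_n$ can be chosen with $t_n\to\infty$ and $t_n=o(n\delta_n^2)$ simultaneously, which is automatic because $n\delta_n^2\gg {\frak r}_n\geq 1.$
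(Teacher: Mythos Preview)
Your argument is correct and is exactly the approach the paper has in mind: the paper states that the consistency ``immediately follows'' from \eqref{sigma_tilde_sigma_2_emp}, and you have spelled out that immediate step by verifying condition \eqref{cond_gamma} uniformly over the class and choosing $t_n\to\infty$ with $t_n=o(n\delta_n^2)$.
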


Corollary \ref{efficient_tilde_empirical} can be easily proved using 
the first statement of Theorem \ref{efficient_tilde},
Proposition \ref{consistency_sigma_r} and Lemma \ref{lemma_xi_eta}.

\section{Proof of Theorem \ref{Thm Minimax lower bounds 1}}
\label{sec:van Trees}

Note that the set $\mathring{\mathcal S}^{(r)}({\frak r}, a, \sigma_0, u)$
is open in nuclear norm topology. This easily follows from the continuity 
of functions $\Sigma \mapsto \|\Sigma\|,$ $\Sigma \mapsto \bar g_r(\Sigma)$
and $\Sigma \mapsto \sigma_r^2(\Sigma;u)$ with respect to the operator norm (for the last function,
see Lemma \ref{Lemma pertubation sigma_u - sigma_u(Sigma+E)}) and, as 
a consequence, with respect to the nuclear norm, and of the functions $\Sigma \mapsto 
{\rm tr}(\Sigma)$ and $\Sigma\mapsto {\bf r}(\Sigma)$ with respect to the nuclear 
norm. 

Let 
$\Sigma=\sum_{s=1}^{\infty}\mu_s P_s\in \mathring{\mathcal S}^{(r)}({\frak r}, a, \sigma_0, u).$ 
Without loss of generality, assume that $\Sigma$ is of finite rank. 
Otherwise, consider $\Sigma_N:= \sum_{s=1}^N \mu_s P_s.$ Clearly, 
$$
{\bf r}(\Sigma_N)\leq {\bf r}(\Sigma)< {\frak r}
$$ 
and, for all $N>r,$ 
$$
\frac{\|\Sigma_N\|}{\bar g_r(\Sigma_N)}=\frac{\|\Sigma\|}{\bar g_r(\Sigma)}<a.
$$
Moreover, since $\|\Sigma_N-\Sigma\|\to 0$ as $N\to \infty,$
we also have that $\sigma_r^2(\Sigma_N;u)\to \sigma_r^2(\Sigma;u)$
as $N\to\infty,$ implying that  $\sigma_r^2(\Sigma_N;u)>\sigma_0^2$
for all large enough $N.$ Thus, 
$\Sigma_N\in \mathring{\mathcal S}^{(r)}({\frak r}, a, \sigma_0, u)$
for a sufficiently large $N$ and we can replace $\Sigma$ by $\Sigma_N.$
Assuming that ${\rm rank}(\Sigma)<\infty,$ let $L:= {\rm Im}(\Sigma).$
We can now restrict $\Sigma$ to an operator acting from $L$ to $L,$
which is non-singular. In what follows, all the covariance operators 
from the class $\mathring{\mathcal S}^{(r)}({\frak r}, a, \sigma_0, u)$ 
that are of interest to us will have $L$ as an image and could be viewed 
as operators from $L$ to $L.$ For simplicity, we just assume that ${\mathbb H}=L$ is a finite-dimensional space. For a fixed $\Sigma,$ consider the following parametric family of perturbations of $\Sigma:$ 
$$
\Sigma_t:=\Sigma+\frac{t H}{\sqrt{n}}, |t|\leq c,
$$
where $H$ is a self-adjoint operator and $c>0$ is a constant. 
Denote 
$$
{\mathcal S}_{\Sigma,c} := \{\Sigma_t: t\in [-c,c]\}. 
$$
Since the set 
$\mathring{\mathcal S}^{(r)}({\frak r}, a, \sigma_0, u)$ is open in nuclear norm topology,
there exists $\delta>0$ such that the condition 
\begin{equation}
\label{cond_H}
\frac{c \|H\|_1}{\sqrt{n}}<\delta,
\end{equation}
implies that ${\mathcal S}_{\Sigma,c}\subset \mathring{\mathcal S}^{(r)}({\frak r}, a, \sigma_0, u).$
Moreover, we will assume that 
\begin{equation}
\label{cond_delta_1}
\delta < \|\Sigma^{-1}\|^{-1}
\end{equation} 
and 
\begin{equation}
\label{cond_delta_2}
\delta< \frac{1}{4}\bar g_r(\Sigma).
\end{equation}
Under these assumptions and condition 
(\ref{cond_H}),  $\Sigma_t$ is a small enough perturbation of $\Sigma$
so that $\Sigma_t$ is non-singular and we can define in a standard way the one-dimensional spectral projection operator $P_t:=P_r(\Sigma_t)=\theta_t\otimes \theta_t,$ where $\theta_t = \theta_r(\Sigma_t)$ is the corresponding unit eigenvector as well as operators $C_t:=C_r(\Sigma_t)$ and 
$$
L_t(H):=L_r(\Sigma_t)(H)= P_t HC_t + C_t HP_t.
$$
It is easy to see that (for a given $c>0$ and large enough $n$ so that the perturbation 
is small) one can choose $t\mapsto \theta_t$ in such a way that $\langle \theta_t, \theta_{t'}\rangle\geq 0, t,t'\in [-c,c].$ Based on these definitions, we also define the functions  
$g(t):=\langle \theta_t, u\rangle$ and $\sigma^2(t):= \sigma_r^2(\Sigma_t;u).$
Concerning the function $g,$ we need the following lemma.

\begin{lemma} 
\label{Lemma lower bound help} 
The function $g$ is continuously differentiable in the interval $[-c,c]$
and the following statements hold:
\begin{enumerate}[i)]
\item $ ~~
g'(t)=  \frac{1}{\sqrt{n}}\langle L_t(H) \theta_t, u \rangle, t\in [-c,c].
$
\item ~~$
|g'(t)-g'(0)| \lesssim \frac{|t| \| H \|^2}{g_r^2 n} \|u \|, t\in [-c,c].
$
\end{enumerate}
\end{lemma}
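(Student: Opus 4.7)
The plan is to derive (i) from first-order perturbation theory for the simple eigenvalue $\mu_r(\Sigma_t)$ and its eigenvector, and then to obtain (ii) by comparing the resulting formula at $t$ and at $0$ via standard perturbation bounds for the spectral projector $P_t$ and the reduced resolvent $C_t$.

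For (i), conditions \eqref{cond_H}, \eqref{cond_delta_1} and \eqref{cond_delta_2} force $\|\Sigma_t - \Sigma\| < \bar g_r(\Sigma)/4$ for every $t \in [-c,c]$, so by Weyl's inequality $\mu_r(t) := \mu_r(\Sigma_t)$ stays simple and is separated from the rest of $\sigma(\Sigma_t)$ by at least $\bar g_r(\Sigma)/2$. By Riesz's analytic perturbation theory, $P_t$, $\mu_r(t)$ and, under the alignment convention $\langle \theta_t, \theta_0\rangle \geq 0$, the unit eigenvector $\theta_t$ are all real-analytic in $t$. Differentiating $\Sigma_t \theta_t = \mu_r(t)\theta_t$ yields $(\Sigma_t - \mu_r(t) I)\theta_t' = \mu_r'(t)\theta_t - (1/\sqrt{n})H\theta_t$. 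Applying $C_t$, which satisfies $C_t(\Sigma_t - \mu_r(t) I) = -(I - P_t)$ and $C_t\theta_t = 0$, gives $(I - P_t)\theta_t' = (1/\sqrt{n}) C_t H \theta_t$; together with $P_t \theta_t' = 0$ (which follows from differentiating $\|\theta_t\|^2 \equiv 1$) this yields $\theta_t' = (1/\sqrt{n}) C_t H \theta_t$. Since $P_t\theta_t = \theta_t$ and $C_t\theta_t = 0$, we also have $L_t(H)\theta_t = (P_t H C_t + C_t H P_t)\theta_t = C_t H\theta_t$, and therefore $g'(t) = \langle \theta_t', u\rangle = (1/\sqrt{n})\langle L_t(H)\theta_t, u\rangle$. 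Continuity of $g'$ follows from that of $t \mapsto \theta_t$ and $t \mapsto C_t$.

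For (ii), using the identity $L_t(H)\theta_t = C_t H\theta_t$ just obtained, I split $g'(t) - g'(0) = (1/\sqrt{n})\langle C_t H(\theta_t - \theta_0), u\rangle + (1/\sqrt{n})\langle (C_t - C_0) H\theta_0, u\rangle$. On our $t$-range $\|C_t\| \leq 2/\bar g_r(\Sigma)$, and the standard perturbation estimate $\|P_t - P_0\| \lesssim \|\Sigma_t - \Sigma\|/\bar g_r(\Sigma)$ (already invoked several times in the paper) combined with $\|\theta_t - \theta_0\| \leq \sqrt{2}\|P_t - P_0\|_2 \lesssim \|P_t - P_0\|$ gives $\|\theta_t - \theta_0\| \lesssim |t|\|H\|/(\sqrt{n}\,\bar g_r(\Sigma))$, so the first term is of order $|t|\|H\|^2\|u\|/(\bar g_r(\Sigma)^2 \sqrt{n})$. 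For the second term I would establish the quantitative Lipschitz bound $\|C_t - C_0\| \lesssim |t|\|H\|/(\sqrt{n}\,\bar g_r(\Sigma)^2)$ from the Riesz representation $C_t = -(2\pi i)^{-1}\oint_{\gamma_r}(\mu_r(t) - \eta)^{-1} R_{\Sigma_t}(\eta)\,d\eta$ with a fixed contour $\gamma_r$ of radius $\bar g_r(\Sigma)/2$ around $\mu_r$ (Weyl guarantees $\mu_r(t)$ stays inside $\gamma_r$ and all other eigenvalues of $\Sigma_t$ stay outside), combined with the resolvent identity $R_{\Sigma_t}(\eta) - R_\Sigma(\eta) = -R_{\Sigma_t}(\eta) (tH/\sqrt{n}) R_\Sigma(\eta)$ and Weyl's bound $|\mu_r(t) - \mu_r| \leq |t|\|H\|/\sqrt{n}$; this is essentially the same contour manipulation used to establish \eqref{Bound pertubation D - D} in Lemma \ref{Lemma pertubation sigma_u - sigma_u(Sigma+E)}, applied to $C_t$ in place of $B$. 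Combining the two estimates and using $\bar g_r(\Sigma) \geq g_r(\Sigma)$ produces $|g'(t) - g'(0)| \lesssim |t|\|H\|^2\|u\|/(g_r^2 n)$, as required.

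The only nontrivial ingredient is the quantitative Lipschitz bound on $C_t$; the rest is routine first-order perturbation analysis in the style already used throughout the paper, and once the formula $\theta_t' = (1/\sqrt{n}) C_t H \theta_t$ is in hand both statements of the lemma follow almost immediately.
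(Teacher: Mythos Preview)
Your argument is correct in substance and reaches the same conclusions, but it proceeds along a somewhat different path than the paper's proof, so a short comparison is in order.

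For (i), the paper computes the difference quotient $(g(t+\delta)-g(t))/\delta$ directly, expressing $\theta_{t+\delta}-\theta_t$ through $P_{t+\delta}-P_t$ via the identity \eqref{represent_theta_r}, expanding $P_{t+\delta}-P_t = L_t(\delta H/\sqrt{n}) + S_t(\delta H/\sqrt{n})$ with $\|S_t\| = O(\delta^2)$, and letting $\delta\to 0$. You instead invoke analytic perturbation theory to differentiate the eigenvalue equation and obtain $\theta_t' = (1/\sqrt{n})C_t H\theta_t$ directly. Your route is the classical Kato--Rellich computation and is arguably more direct; the paper's route is more self-contained because it recycles representations already established earlier in the paper.

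For (ii), both proofs split into a ``change of operator'' term and a ``change of eigenvector'' term. The paper bounds $\|L_t(H/\sqrt{n}) - L_0(H/\sqrt{n})\|$ using the representation $L_t(H/\sqrt{n}) = -(2\pi i)^{-1}\oint_{\gamma_r} R_{\Sigma_t}(\eta)(H/\sqrt{n}) R_{\Sigma_t}(\eta)\,d\eta$, which has the advantage of not involving $\mu_r(t)$. You instead bound $\|C_t - C_0\|$ via the contour representation of $C_t$; this works, but because your integrand contains the factor $(\mu_r(t)-\eta)^{-1}$, you need the extra step of controlling $|\mu_r(t)-\mu_r|$ through Weyl's inequality. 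Either way the final estimate is the same.

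Two minor slips: first, in your bound on the first term you wrote $|t|\|H\|^2\|u\|/(\bar g_r^2 \sqrt{n})$ where the denominator should be $n$ (the outer factor $1/\sqrt{n}$ combines with the $1/\sqrt{n}$ inside $\|\theta_t-\theta_0\|$). Second, the inequality $\bar g_r(\Sigma)\geq g_r(\Sigma)$ is backwards: by definition $\bar g_r=\min_{s\leq r} g_s \leq g_r$. This does not damage the argument, since $\|\Sigma_t - \Sigma\| < \bar g_r/4 \leq g_r/4$ lets you run every estimate with $g_r$ in place of $\bar g_r$ from the outset; simply replace $\bar g_r$ by $g_r$ throughout and drop the final ``using $\bar g_r \geq g_r$'' step.
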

	
\begin{proof} 
Let $\delta\in (-1,1).$ 
Similarly to \eqref{represent_theta_r} (see also (6.6) in \cite{KoltchinskiiLouniciPCAAHP}), 
\begin{align} 
\label{Decomposition Lemma for lower bound g}	
g(t+\delta) - g(t) 
& = \langle \theta_{t+\delta}-\theta_t, u \rangle \notag 
\\
& 
= 
\frac{\langle( P_{t+\delta}-P_t)\theta_t, u \rangle - (\sqrt{1+\langle ( P_{t+\delta} - P_t) \theta_t, \theta_t \rangle}-1)\langle \theta_t, u \rangle  }{\sqrt{1+\langle ( P_{t+\delta} - P_t) \theta_t, \theta_t \rangle}}.
\end{align}
Applying the first order perturbation expansion (similar to \eqref{first_perturb}) 
to the spectral projections $P_t, P_{t+\delta},$ 
we obtain that 
\begin{equation}
P_{t+\delta}-P_t=L_t(\delta H / \sqrt{n}) + S_t (\delta H / \sqrt{n})
\end{equation}
with the remainder term satisfying the bound
\begin{equation} 
\label{Bound Lemma lower nonlinear term I}
\| S_t (\delta H / \sqrt{n})\| \lesssim \frac{\delta^2 \|H \|^2}{ g_r^2 n}= O(\delta^2).
\end{equation}
Moreover, since $C_t \theta_t=0,$
\begin{equation}
\langle L_t(\delta H/\sqrt{n}) \theta_t, \theta_t \rangle
=\frac{1}{\sqrt{n}}\langle (P_t H C_t + C_t H P_t)\theta_t, \theta_t\rangle
=0
\end{equation}
and therefore we have that
\begin{equation} 
\label{Bound Lemma lower nonlinear term decomposition I}
|\langle (P_{t+\delta}-P_t) \theta_t, \theta_t \rangle| \lesssim \frac{\delta^2  \|H \|^2}{ g_r^2 n} = O(\delta^2).
\end{equation}
Hence, using again \eqref{Decomposition Lemma for lower bound g}, \eqref{Bound Lemma lower nonlinear term I} and \eqref{Bound Lemma lower nonlinear term decomposition I}, 
we have that
\begin{align}
\frac{g(t+\delta)-g(t)}{\delta} = \frac{1}{\sqrt{n}}\frac{\langle L_t(H)\theta_t, u \rangle }{1+O(\delta)} + O(\delta).
\end{align}
Passing to the limit as $\delta\to 0$ implies the first assertion. \\
	
We now prove the second claim. First note that
\begin{align} 
\label{Bound g'(t)-g'(0)}
|g'(t)-g'(0)| 
& 
= 
|\langle L_t(H/\sqrt{n}) \theta_t - L_0(H/\sqrt{n})\theta_0, u \rangle| 
\notag 
\\
& 
\leq | \langle ( L_t (H/\sqrt{n})-L_0(H/\sqrt{n}))\theta_t, u \rangle | 
+ |\langle L_0(H/\sqrt{n}) (\theta_t-\theta_0), u \rangle | 
\notag 
\\
& 
\leq \| L_t(H/\sqrt{n}) - L_0(H/\sqrt{n}) \| \|u \| 
+ \| L_0(H/\sqrt{n}) \| \| \theta_t - \theta_0 \| \| u \|.
\end{align}
Also,
\begin{equation}
\label{L_t_resolv}
L_t(H/\sqrt{n})= 
-\frac{1}{2 \pi i} \oint_{\gamma_r} R_{\Sigma_t}(\eta) \frac{H}{\sqrt{n}} R_{\Sigma_t}(\eta) d\eta, 
\end{equation}
where $\gamma_r$ is the circle of radius $g_r/2$ with the center at $\mu_r$
and with counterclockwise orientation.  Therefore, by a standard argument already used in the proof of Lemma \ref{Lemma pertubation sigma_u - sigma_u(Sigma+E)},	\begin{align} 
\label{L_t-L_0}
\|L_t(H/\sqrt{n}) - L_0(H/\sqrt{n})\| 
\leq 
\frac{g_r}{2} \sup_{\eta \in \gamma_r} \|R_{\Sigma_t}(\eta)-R_{\Sigma}(\eta)\| 
(\|R_{\Sigma} (\eta)\|+\|R_{\Sigma_t} (\eta)\|)\frac{\|H \|}{\sqrt{n}} .
\end{align}
By \eqref{Bound Resolvent} and \eqref{Bound Resolvent pertubation - resolvent sigma},
we have 
\begin{equation*} 
\|R_\Sigma (\eta)\| \leq \frac{2}{g_r}, \ \ 
\|R_{\Sigma_t} (\eta)\| \leq \frac{4}{g_r}
\end{equation*} 
and 
\begin{align*}
\| R_{\Sigma_t}(\eta)-R_{\Sigma}(\eta) \| \leq \frac{8}{g_r^2} \frac{|t| \|H \|}{\sqrt{n}}.
\end{align*}
Therefore, it follows from \eqref{L_t-L_0} that 
\begin{align} 
\label{L_t-L_0_A}
\|L_t(H/\sqrt{n}) - L_0(H/\sqrt{n})\| 
\leq 
\frac{24|t|\|H\|^2}{g_r^2 n}.
\end{align}
It remains to observe that 
$$
\|L_0(H)\| = \|P_r H C_r+ C_r H P_r\|\leq 
\frac{2\|H\|}{g_r}
$$ 
and 
$$
\|\theta_t-\theta_0\| \leq \|P_t - P_0\|_2 \leq \frac{4 \sqrt{2}|t|\|H\|}{g_r \sqrt{n}}
$$
(where we also used the fact that ${\rm rank}(P_t-P_0)\leq 2$ and 
$\|P_t-P_0\|_2\leq \sqrt{2}\|P_t-P_0\|$).
This implies the bound 
\begin{align}
\label{L_0_H}
\|L_0(H/\sqrt{n})\| \|\theta_t - \theta_0\| \|u\|
\leq \frac{8\sqrt{2} |t| \|H\|^2}{g_r^2 n}\|u\|.
\end{align}
The second assertion follows from the bounds \eqref{Bound g'(t)-g'(0)}, \eqref{L_t-L_0_A} and \eqref{L_0_H}.

The continuity of the derivative $g'(t)$ easily follows from the continuity 
of the functions $t\mapsto \theta_t$ and $t\mapsto L_t(H/\sqrt{n})$ 
(which could be proved using representation \eqref{L_t_resolv}).

\end{proof}

We will study the following estimation problem. Let 
$\Sigma$ be fixed and let $X_1,\dots, X_n$ be $i.i.d.$ random variables in ${\mathbb H}$ sampled from $N(0;\Sigma_t), |t|\leq c,$ $t$ being an unknown parameter.
The goal is to estimate the function $g(t)$ based on the observations $X_1,\dots, X_n. $	
We will use van Trees inequality to obtain a minimax lower bound on the risk 
of estimation of $g(t)$ with respect to quadratic loss. To this end, let $\pi$ be 
a smooth probability density on $[-1,1],$ satisfying the boundary conditions
$\pi(-1)=\pi(1)=0$ as well the condition $J_\pi:= \int_{-1}^{1} \frac{\pi'(s)^2}{\pi(s)}ds<+\infty.$ Let $\pi_c(t):= \frac{1}{c}\pi\bigl(\frac{t}{c}\bigr), t\in [-c,c]$ be a prior 
on $[-c,c].$ Then (see e.g. \cite{GillLevit}), for any estimator 
$T_n=T_n(X_1,\dots, X_n)$ of $g(t)$ the following bound holds
\begin{align} 
\label{inequality van trees}
\nonumber
\sup_{|t| \leq c} n \mathbb{E}_t (T_n-g(t))^2 
& \geq 
n \int_{-c}^c\mathbb{E}_t (T_n-g(t))^2\pi_c(t)dt
\\
&
\geq 
\frac{n \big (\int_{-c}^{c} g'(t) \pi_c(t) d t\big )^2}{\int_{-c}^c \mathbb{I}_n(t) \pi_c(t)dt+ 
J_{\pi_c}},
\end{align}
where $\mathbb{I}_n(t)= n{\mathbb I}(t)$ 
denotes the Fisher information for the model $$X_1,\dots, X_n \overset{i.i.d.}{\thicksim} N(0,\Sigma_t),$$
$t\in [-c,c].$ Let ${\mathbb I}(t):={\mathbb I}_1(t).$
It is well known that the Fisher information for the model $X\sim N(0;\Sigma)$
with non-singular covariance matrix $\Sigma$ is $\mathbb{I}(\Sigma)= \frac{1}{2}(\Sigma^{-1}\otimes \Sigma^{-1})$ (see, e.g., \cite{Eaton}). Thus,
$$
{\mathbb I}_n(t)= n {\mathbb I}(t)= 
n\bigl\langle {\mathbb I}(\Sigma_t)\frac{d\Sigma_t}{dt},
\frac{d\Sigma_t}{dt}\bigr\rangle  
=\frac{n}{2}\bigl\langle (\Sigma_t^{-1}\otimes \Sigma_t^{-1})\frac{H}{\sqrt{n}},
\frac{H}{\sqrt{n}}\bigr\rangle   
$$
$$
= \frac{1}{2} \langle \Sigma_t^{-1}H\Sigma_t^{-1}, H\rangle= 
\frac{1}{2} {\rm tr}(\Sigma_t^{-1}H\Sigma_t^{-1}H).
$$
We will now bound the numerator of the expression in the right hand side of inequality 
\eqref{inequality van trees} from below and its denominator from above. 

{\it Bound on the numerator.} We use Lemma \ref{Lemma lower bound help} to obtain that for some constant $B_1>0$ 
\begin{align}
\label{numerator_bd}
\big (\int_{-c}^{c} g'(t) \pi_c(t) dt \big )^2 
& = \big (\int_{-c}^{c} [g'(0)+(g'(t)-g'(0))] \pi(t/c) dt/c \big )^2 \notag \\  
& \geq g'(0)^2 + 2 g'(0) \int_{-c}^{c}( g'(t)-g'(0)) \pi(t/c)dt/c \notag \\
& \geq g'(0)^2 - 2 |g'(0)| \int_{-c}^{c}| g'(t)-g'(0)| \pi(t/c)dt/c \notag \\
& \geq g'(0)^2 - B_1 c |g'(0)| \int_{-1}^{1} |t| \pi(t)dt \frac{\|H\|^2}{g_r^2 n} \|u\| \notag \\
& = g'(0)^2 - B_{1} c |g'(0)| \frac{\| H \|^2}{g_r^2 n} \| u \| \notag \\
& = \frac{\langle L_r(H)\theta_r,u\rangle^2}{n} - 
|\langle L_r(H)\theta_r,u\rangle|
\frac{B_{1}c \| H \|^2}{g_r^2 n^{3/2}} \|u\|.
\end{align}

{\it Bound on the denominator.} 
First note that, by a simple computation,
\begin{equation}
\label{Jpi}
J_{\pi_c}= J_{\pi}/c^2.
\end{equation}
Then, we need to bound 
$\mathbb{I}_n(t)=\frac{1}{2} {\rm tr}(\Sigma_t^{-1}H\Sigma_t^{-1}H)$ in terms of 
$\mathbb{I}_n(0)=\frac{1}{2} {\rm tr}(\Sigma^{-1}H\Sigma^{-1}H).$ 
Assume that 
\begin{equation}
\label{cond_HH}
\frac{c\|\Sigma^{-1}H\|}{\sqrt{n}}\leq \frac{1}{2}.
\end{equation}
Arguing as in the proof of Lemma \ref{Lemma pertubation sigma_u - sigma_u(Sigma+E)}, we easily get that
\begin{align}
\Sigma_t^{-1} 
= \Sigma^{-1} + \underbrace{\left [ \left ( I+ \frac{t \Sigma^{-1}H}{\sqrt{n}}\right )^{-1} - I\right ]}_{=:D} \Sigma^{-1},
\end{align} 
where 
$$
\|D \| \leq 2 |t| \frac{\| \Sigma^{-1} H \|}{\sqrt{n}}\leq 1.
$$
Furthermore, note that
\begin{equation*}
\trace \left ( \Sigma_t^{-1} H \Sigma_t^{-1} H \right ) = 
\trace (\Sigma^{-1} H \Sigma^{-1} H ) + 2 \trace (D \Sigma^{-1} H \Sigma^{-1} H) + \trace ( D \Sigma^{-1} H D \Sigma^{-1} H).
\end{equation*}
and thus we have that
\begin{align}
\mathbb{I}_n(t)
& \leq \mathbb{I}_n(0) 
+ \|D \| \| \Sigma^{-1} H \Sigma^{-1} H \|_1 + 
\frac{\|D\Sigma^{-1}H\|_2\|H\Sigma^{-1}D\|_2}{2} 
\notag 
\\ 
& 
\leq \mathbb{I}_n(0)  + 
\left (\|D\| + \frac{\| D \|^2}{2}\right) \|\Sigma^{-1}H \|_2^2 
\leq \mathbb{I}_n(0)  + 
3 \frac{|t|\|\Sigma^{-1}H \|_2^3}{\sqrt{n}}.
\label{denominator_bd_A}
\end{align}
Using \eqref{denominator_bd_A}, we obtain the 
following bound:
\begin{align}
\label{denominator_bd}
\int_{-c}^c \mathbb{I}_n(t) \pi_c(t)dt 
& 
\leq 
\mathbb{I}_n(0) + 
3 \frac{ \| \Sigma^{-1}H \|_2^3}{\sqrt{n}} 
\int_{-c}^{c} |t| \pi(t/c)dt/c
\notag 
\\
& 
\leq 
\mathbb{I}_n(0)+  
\frac{3c\| \Sigma^{-1}H \|_2^3}{\sqrt{n}}.
\end{align}

Substituting \eqref{numerator_bd}, \eqref{denominator_bd} and \eqref{Jpi}
into van Trees inequality \eqref{inequality van trees} and taking into account 
that 
$$
{\mathbb I}_n(0) = \frac{1}{2}{\rm tr}(\Sigma^{-1}H\Sigma^{-1}H)= 
\frac{1}{2}\|\Sigma^{-1/2}H\Sigma^{-1/2}\|_2^2
$$
and 
$$
\langle L_r(H)\theta_r,u\rangle = \langle (P_r H C_r + C_r H P_r)\theta_r,u\rangle 
= \langle H\theta_r, C_r u\rangle
$$
$$
=\frac{1}{2} \langle H, \theta_r \otimes C_r u + C_r u \otimes \theta_r\rangle
= \langle \Sigma^{-1/2}H\Sigma^{-1/2}, \Sigma^{-1/2}B\Sigma^{-1/2}\rangle,
$$
where 
$$
B:= \frac{1}{2}(\Sigma \theta_r \otimes \Sigma C_r u+ \Sigma C_r u\otimes \Sigma \theta_r),
$$
we obtain that 
\begin{align} 
\label{inequality van trees''}
&
\nonumber
\sup_{|t| \leq c} n \mathbb{E}_t (T_n-g(t))^2 
\\
&
\geq 
\frac{\langle \Sigma^{-1/2}H\Sigma^{-1/2}, \Sigma^{-1/2}B\Sigma^{-1/2}\rangle^2 - 
|\langle \Sigma^{-1/2}H\Sigma^{-1/2}, \Sigma^{-1/2}B\Sigma^{-1/2}\rangle|
\frac{B_{1}c \| H \|^2}{g_r^2 \sqrt{n}} \|u\|}
{\frac{1}{2}\|\Sigma^{-1/2}H\Sigma^{-1/2}\|_2^2+  
\frac{3c\| \Sigma^{-1}H \|_2^3}{\sqrt{n}}+ 
J_{\pi}/c^2}.
\end{align}
In what follows, we set $H:=B.$ Note that with this choice of $H$
$$
2\|\Sigma^{-1/2}B\Sigma^{-1/2}\|_2^2= \frac{1}{2}
\|\Sigma^{1/2} \theta_r \otimes \Sigma^{1/2} C_r u+ \Sigma^{1/2} C_r u\otimes 
\Sigma^{1/2}\theta_r\|_2^2
$$
$$
=\frac{1}{2} \Bigl(\|\Sigma^{1/2} \theta_r \otimes \Sigma^{1/2} C_r u\|_2^2+ 
\|\Sigma^{1/2} C_r u\otimes 
\Sigma^{1/2}\theta_r\|_2^2\Bigr)= \|\Sigma^{1/2}\theta_r\|^2 \|\Sigma^{1/2}C_r u\|^2
= \sigma_r^2(\Sigma;u).
$$
Also, by a simple computation (using that ${\rm rank}(B)=2$), we have that 
\begin{equation}
\label{bounds_on_B_1}
\|B\|\leq \|B\|_2 \leq \frac{1}{\sqrt{2}}\frac{\|\Sigma\|^2}{g_r}\|u\|,
\ \ \|B\|_1 \leq \frac{\|\Sigma\|^2}{g_r}\|u\|
\end{equation}
and that
\begin{equation}
\label{bounds_on_B_2}
\|\Sigma^{-1}B\|\leq \|\Sigma^{-1}B\|_2 \leq \frac{1}{\sqrt{2}}\frac{\|\Sigma\|}{g_r}\|u\|.
\end{equation}
These bounds imply that, for any given $c>0$ and for all $n$ large enough, $H=B$
satisfies condition \eqref{cond_H} for a small enough $\delta$ 
such that 
${\mathcal S}_{\Sigma,c}\subset \mathring{\mathcal S}^{(r)}({\frak r}, a, \sigma_0, u)$
and conditions \eqref{cond_delta_1}, \eqref{cond_delta_2} hold. 
Also, $H=B$ satisfies condition \eqref{cond_HH} (for any given $c>0$ and all large enough $n$). 

For $H=B,$ inequality \eqref{inequality van trees''} becomes 
\begin{align} 
\label{inequality van trees_fin}
&
\nonumber
\sup_{|t| \leq c} n \mathbb{E}_t (T_n-g(t))^2 
\\
&
\nonumber
\geq 
\frac{\|\Sigma^{-1/2}B\Sigma^{-1/2}\|_2^4 - 
\|\Sigma^{-1/2}B\Sigma^{-1/2}\|_2^2
\frac{B_{1}c \|B\|^2}{g_r^2 \sqrt{n}} \|u\|}
{\frac{1}{2}\|\Sigma^{-1/2}B\Sigma^{-1/2}\|_2^2+  
\frac{3c\| \Sigma^{-1}B\|_2^3}{\sqrt{n}}+ 
J_{\pi}/c^2}
\\
&
\geq 
\sigma_r^2(\Sigma;u)
\biggl(1-
\frac{\frac{B_{1}c \|B\|^2}{2g_r^2 \sqrt{n}} \|u\|  + 
\frac{3c\| \Sigma^{-1}B\|_2^3}{\sqrt{n}}+ J_{\pi}/c^2}
{\frac{1}{4}\sigma_r^2(\Sigma;u)+  
\frac{3c\| \Sigma^{-1}B\|_2^3}{\sqrt{n}}+ 
J_{\pi}/c^2}\biggr).
\end{align}
It remains to replace $\sigma_r^2(\Sigma;u)$ with $\sigma^2(t)=\sigma_r^2(\Sigma_t;u).$
To this end, we use the bound \eqref{sigma_tilde_sigma_1} to obtain that for some 
constant $D_1>0$
\begin{equation}
\label{sigma_tilde_sigma_1''}
\sup_{t\in [-c,c]}\frac{\sigma^2(t)}{\sigma_r^2(\Sigma;u)}
\leq 1+\frac{D_1}{\sigma_r^2(\Sigma;u)}\frac{\| \Sigma \|^2}{g_r^3} \frac{c\|B\|}{\sqrt{n}} \|u\|^2.
\end{equation}
It follows from \eqref{inequality van trees_fin} that 
\begin{align} 
\label{inequality van trees_fin''}
&
\sup_{t\in [-c,c]}\frac{\sigma^2(t)}{\sigma_r^2(\Sigma;u)}
\sup_{|t| \leq c} \frac{n \mathbb{E}_t (T_n-g(t))^2}{\sigma^2(t)} 
\geq 
1-
\frac{\frac{B_{1}c \|B\|^2}{2g_r^2 \sqrt{n}} \|u\|  + 
\frac{3c\| \Sigma^{-1}B\|_2^3}{\sqrt{n}}+ J_{\pi}/c^2}
{\frac{1}{4}\sigma_r^2(\Sigma;u)+  
\frac{3c\| \Sigma^{-1}B\|_2^3}{\sqrt{n}}+ 
J_{\pi}/c^2}.
\end{align}
Suppose 
\begin{equation}
\label{assump_XYZ}
\frac{D_1}{\sigma_r^2(\Sigma;u)}\frac{\| \Sigma \|^4}{g_r^4} \frac{c}{\sqrt{n}} 
\|u\|^3 \leq 1,
\end{equation}
which holds for any given $c>0$ and all large enough $n$ and which, in view of bounds  \eqref{bounds_on_B_1}, implies that 
\begin{equation*}
\frac{D_1}{\sigma_r^2(\Sigma;u)}\frac{\| \Sigma \|^2}{g_r^3} \frac{c\|B\|}{\sqrt{n}} 
\|u\|^2 \leq 1.
\end{equation*}
Under condition \eqref{assump_XYZ},
bounds \eqref{inequality van trees_fin''} and \eqref{sigma_tilde_sigma_1''} 
(and also bounds \eqref{bounds_on_B_1} and \eqref{bounds_on_B_2}) imply 
that 
\begin{align} 
\label{inequality van trees_final}
&
\nonumber
\sup_{\Sigma\in \mathring{\mathcal S}^{(r)}({\frak r}, a, \sigma_0, u)}
\frac{{\mathbb E}_{\Sigma}(T_n-\langle \theta_r(\Sigma),u\rangle)^2}{\sigma_r^2(\Sigma;u)}
\geq 
\sup_{|t| \leq c} \frac{n \mathbb{E}_t (T_n-g(t))^2}{\sigma^2(t)} 
\\
&
\nonumber
\geq 
\biggl(1-
\frac{\frac{B_{1}c \|B\|^2}{2g_r^2 \sqrt{n}} \|u\|  + 
\frac{3c\| \Sigma^{-1}B\|_2^3}{\sqrt{n}}+ J_{\pi}/c^2}
{\frac{1}{4}\sigma_r^2(\Sigma;u)+  
\frac{3c\| \Sigma^{-1}B\|_2^3}{\sqrt{n}}+ 
J_{\pi}/c^2}\biggr)
\biggl(1-\frac{D_1}{\sigma_r^2(\Sigma;u)}\frac{\| \Sigma \|^2}{g_r^3} \frac{c\|B\|}{\sqrt{n}} \|u\|^2\biggr)
\\
&
\geq 
\biggl(1-
\frac{B_1 a^4 \|u\|^3 \frac{c}{\sqrt{n}}   + 
3a^3 \|u\|^3\frac{c}{\sqrt{n}}+ J_{\pi}/c^2}
{\frac{\sigma_0^2}{4}+
3a^3 \|u\|^3\frac{c}{\sqrt{n}}+ 
J_{\pi}/c^2}\biggr)
\biggl(1-\frac{D_1}{\sigma_0^2}a^4\|u\|^3\frac{c}{\sqrt{n}}\biggr).
\end{align}
It remains to pass to the limit in inequality \eqref{inequality van trees_final}
first as $n\to\infty$ and then as $c\to \infty$ to complete the proof. 

A local version of the theorem easily follows from the above arguments since, for all $\varepsilon>0, c>0$
and for all large enough $n,$
$
{\mathcal S}_{\Sigma_0,c}\subset \{\Sigma: \|\Sigma-\Sigma_0\|_1\leq \varepsilon\}.
$
\\

\textbf{Acknowledgements.} The authors thank an Associate Editor and two referees for careful reading of and critical remarks on the manuscript. The first author thanks the Department of Pure Mathematics and Mathematical Statistics of the University of Cambridge for its hospitality during visits in fall 2016 and summer 2017, partly funded by ERC grant UQMSI/647812.

\end{document}